\numberwithin{equation}{section}
\theoremstyle{plain}
 \theoremstyle{definition}
\newtheorem{?}[Th]{Problem}
\newtheorem*{theorem*}{Theorem}
\newtheorem{theorem}{Theorem}[section]
\newtheorem{proposition}[theorem]{Proposition}
\newtheorem{property}[theorem]{Property}
\newtheorem{claim}[theorem]{Claim}
\newtheorem{lemma}[theorem]{Lemma}
\newtheorem{observation}[theorem]{Observation}
\newtheorem{question}[theorem]{Question}
\newtheorem*{conjecture*}{Conjecture}
\newtheorem{definition}{Definition}
\theoremstyle{definition}
\newtheorem{remark}[theorem]{Remark}
\newcommand{\wth}{\widetilde{\theta}}
\newcommand{\wds}{\widetilde{ds}}
\newcommand{\wg}{\widetilde{g}}
\newcommand{\C}{\mathcal{C}}
\newcommand{\RR}{\mathbb{R}}
\newcommand{\CC}{\widehat{\mathcal{C}}}
\newcommand{\LE}{\mathcal{L}}
\newcommand{\LL}{\Tilde{\mathcal{L}}}
\newcommand{\G}{\mathcal{G}}
\newcommand{\TG}{\widetilde{G}}
\newcommand{\GG}{\widetilde{\mathcal{G}}}
\newcommand{\mt}{\widetilde{M}}
\newcommand{\F}{\mathcal{F}}
\newcommand{\FF}{\widetilde{\mathcal{F}}}
\newcommand{\HH}{\mathbb{H}}
\newcommand{\A}{\mathcal{A}}
\newcommand{\T}{\mathcal{T}}
\newcommand{\M}{\mathcal{M}}
\newcommand{\N}{\mathcal{N}}
\newcommand{\MM}{\widetilde{\mathcal{M}}}
\newcommand{\NN}{\widetilde{\mathcal{N}}}
\newcommand{\TT}{\mathbb{T}^2}
\newcommand{\OM}{\widehat{M}}
\newcommand{\TM}{\widetilde{M}}
\newcommand{\OA}{\widehat{\mathcal{A}}}
\newcommand{\OT}{\widehat{T}_0}
\newcommand{\OOT}{\widehat{T}}
\newcommand{\OTT}{\widehat{\mathcal{T}}}
\newcommand{\OG}{\widehat{G}}
\newcommand{\OGG}{\widehat{\mathcal{G}}}
\newcommand{\D}{\mathcal{D}}
\newcommand{\q}{\bar{q}}
\newcommand{\og}{\mathfrak{o}}
\newcommand{\h}{\widetilde{h}}
\begin{document}

\title{New Classes of Quasigeodesic Anosov Flows in $3$-Manifolds}

\author{Anindya Chanda \and S\'ERGIO R. FENLEY}
\thanks{S. Fenley's research partially supported by Simons foundation 637554, by National Science
Foundation grant DMS-2054909, and by the Institute for Advanced
Study.}

\address{Department of Mathematics\\ Florida State University \\ 1017 Academic Way\\Tallahassee, FL, 32304, United States} 
\email{ac17t@my.fsu.edu}

\address{Department of Mathematics\\ Florida State University \\ 1017 Academic Way\\Tallahassee, FL, 32304, USA  
and Institute for Advanced Study, Princeton, NJ 08540, USA}

\email{fenley@math.fsu.edu}

\subjclass[2020]{Primary: 57R30, 37E10, 37D20, 37C85.
Secondary: 53C12, 37C27, 37D05, 37C86.}

\keywords{Anosov flows, Quasigeodesics, Geometric properties of flow lines. Large scale properties of flows}

\begin{abstract} Quasigeodesic behavior of flow lines  is a very useful property
in the study of Anosov flows. Not every Anosov flow in dimension three is quasigeodesic. In fact up to orbit equivalence, the
only previously known examples of quasigeodesic Anosov flows were 
suspension flows. In this article, we prove that a new class of examples are quasigeodesic. 
These are
the first examples of quasigeodesic Anosov flows on three manifolds
that are neither Seifert, nor solvable, nor hyperbolic.
In general, it is very hard to show that a given flow in quasigeodesic, 
and in this article we provide a new method to prove that an Anosov flow is quasigeodesic. 
\end{abstract}

\maketitle
\section*{Introduction}
 A flow on a manifold  is called \textit{quasigeodesic} if its orbits are uniformly efficient up to a bounded multiplicative
and additive error in measuring distances when lifted to the universal cover.
Quasigeodesics are extremely important for example in hyperbolic 
manifolds. This is because the Morse Lemma says that on a hyperbolic space, any
quasigeodesic is within a bounded distance from a length minimizing geodesic
(when lifted to the universal cover) \cite{Gro87,Thu82}. 
The distance depends on the
quasigeodesic constants. 

From the dynamical systems point of view, there are several 
important reasons to study hyperbolic flows, and in
three manifolds these are Anosov and pseudo-Anosov flows \cite{Ano69,KH95,
Mos92b}.
The question as to whether an Anosov or a pseudo-Anosov flow
in a closed hyperbolic three manifold is quasigeodesic
has been studied a lot \cite{Fen94,Fen95,Fen16,Fen22,FM01,Mos92a}.
In addition any suspension flow is a quasigeodesic flow,
in any manifold \cite{Zeg93}.

The question of quasigeodesic behavior
for Anosov flows on non-hyperbolic three manifolds, which are not orbit
equivalent to suspensions, has not been studied at all. The goal of this
article is to start the study of the quasigeodesic property for Anosov
flows in more general three manifolds.

First we analyze  Anosov flows in Seifert fibered
three manifolds and prove:

\begin{theorem}
Let $\Phi$ be an Anosov flow in a closed, Seifert fibered
$3$-manifold. Then $\Phi$ is a quasigeodesic flow.
\end{theorem}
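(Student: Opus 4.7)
The plan is to reduce, via Ghys's classification of Anosov flows on Seifert fibered manifolds, to the case of the geodesic flow on the unit tangent bundle of a closed hyperbolic surface, where the quasigeodesic property follows directly because lifted orbits project to honest geodesics in the hyperbolic plane.

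First I would invoke Ghys's theorem: any Anosov flow $\Phi$ on a closed Seifert fibered three-manifold $M$ is, after passing to a finite cover of $M$, orbit equivalent to the geodesic flow $\phi^g$ on $T^1 S$ for some closed hyperbolic surface $S$. Both passage to finite covers and orbit equivalence preserve the quasigeodesic property: finite covers trivially so (they share a universal cover), and orbit equivalences because they lift to $\pi_1$-equivariant quasi-isometries of universal covers sending orbits to orbits setwise, while uniform upper and lower bounds on flow speed on the compact quotients make arc length along orbits comparable to flow time. So it suffices to prove that $\phi^g$ on $T^1 S$ is quasigeodesic.

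Equip $T^1 S$ with any Riemannian metric and lift to the universal cover $\widetilde{T^1 S}$. Let $\widetilde\pi : \widetilde{T^1 S} \to \mathbb{H}^2$ be the lift of the bundle projection $T^1 S \to S$; by compactness $\widetilde\pi$ is $L$-Lipschitz for some $L>0$. A lifted orbit of $\widetilde{\phi^g}$ through $(p,v)$ is precisely $\{(\gamma(s), \gamma'(s)) : s \in \mathbb{R}\}$, where $\gamma$ is the complete unit-speed hyperbolic geodesic with $\gamma(0)=p$, $\gamma'(0)=v$. Hence $\widetilde\pi$ maps each flow orbit onto a complete hyperbolic geodesic of $\mathbb{H}^2$ parametrized at unit speed. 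For $x$ on an orbit and $y = \widetilde{\phi^g}_t(x)$ the projections lie at hyperbolic distance exactly $|t|$, so the Lipschitz bound gives
\[
d_{\widetilde{T^1 S}}(x, y) \;\geq\; L^{-1}\, d_{\mathbb{H}^2}(\widetilde\pi(x), \widetilde\pi(y)) \;=\; L^{-1}\, |t|,
\]
and together with the upper bound $d_{\widetilde{T^1 S}}(x, y) \leq C|t|$ coming from bounded flow speed, every orbit is a uniform quasigeodesic.

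The main obstacle in this strategy is the invocation of Ghys's deep classification theorem; once it is in hand, the quasigeodesic verification is essentially immediate because the projection of every orbit to the hyperbolic base is an \emph{honest} geodesic, not merely a quasigeodesic. A minor but important technical point is the invariance of the quasigeodesic property under orbit equivalences, which do not preserve parametrization; this is handled by the uniform bounds on flow speed on the compact quotients, ensuring that quasi-isometric embeddability of orbits as subsets and the quasigeodesic property of orbits as parametrized curves coincide.
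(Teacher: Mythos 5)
Your proposal is correct and follows essentially the same route as the paper: reduce via the Barbot--Ghys classification (the paper cites Barbot, who treats general Seifert fibered spaces rather than only circle bundles) to a finite lift of the geodesic flow on $T^1S$, observe that lifted orbits project to genuine geodesics of $\mathbb{H}^2$ so that a Lipschitz projection forces the quasigeodesic property, and transfer back through the orbit equivalence by compactness. The only cosmetic differences are that the paper equips the tangent bundle with the Sasaki-type metric and obtains the stronger conclusion that lifted orbits are globally length minimizing, whereas your Lipschitz-projection shortcut with an arbitrary metric yields the (sufficient) uniform quasigeodesic bound, and that your one-line justification of the transfer across the orbit equivalence is terser than the paper's $\eta_1,\eta_2$ comparison of corresponding orbit-segment lengths but rests on the same compactness fact.
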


To prove this theorem we first show that, under an appropriate natural metric, flow lines of the geodesic flow on the unit tangent bundle of a hyperbolic surface are globally length minimizing (in the universal cover). An Anosov flow on a Seifert fibered manifold is orbitally equivalent to a finite lift of the geodesic flow on the unit tangent bundle of a hyperbolic surface. By using the orbit equivalence we show that the flow lines of the Anosov flow on a Seifert fibered manifold are length minimizing up to finite bounds when lifted to the universal cover.
%To prove this theorem we show that under an appropriate natural metric in the manifold,and up to orbit equivalence, the flow lines are length minimizing geodesics when lifted to the universal cover.

We remark that not every Anosov flow is quasigeodesic: 
in \cite{Fen94}, the second author proved that there exist infinitely many examples of Anosov flows on three dimensional hyperbolic manifolds which are not quasigeodesic. 
In addition for a flow on a general manifold $\M$, there might exist a Riemannian metric such that all the flow lines are geodesic (a differential
geometric condition), but that does not guarantee that the flow lines are quasigeodesic. 

The main result of this article proves the quasigeodesic property
for Anosov flows in new classes of three manifolds.
These manifolds have non trivial JSJ decomposition \cite{Hem76}.
First recall that the DA operation \cite{Wil70} transforms
a hyperbolic periodic orbit into either an attracting or repelling
periodic orbit.
Franks and Williams \cite{FW80} used this operation to
produce the first examples of non transitive Anosov flows
in dimension three as follows: they did a DA operation on
a closed orbit of a suspension, producing a repelling orbit
and an attractor. They removed a solid torus neighborhood of
the periodic orbit to produce a flow in a manifold with
boundary so that the flow is incoming along the boundary.
They carefully glued this with a copy with a time reversed
flow. Under certain homotopy types of gluings, the resulting
flow is Anosov. These examples revolutionized the study of
Anosov flows in dimension three.

In modern terminology, the manifold with boundary, equipped with the
incoming flow, is called a {\em hyperbolic plug}
\cite{BBY17}. In their article Beguin, Bonatti and Yu 
prove that under very general conditions, gluing hyperbolic
plugs produces Anosov flows. In this article we consider
Anosov flows which we call {\em generalized Franks-Williams
flows}. They are obtained as gluings of hyperbolic plugs
as follows: start with a suspension Anosov flow and do
a DA operation on finitely many periodic orbits. The operations
are done so that either they all produce attracting orbits,  or they all produce
repelling orbits. Then remove a solid torus neighborhood
of each DA orbit. Each plug contains either an attractor or a 
repeller. Glue finitely many of these plugs using the techniques
of \cite{BBY17}. The resulting flow is an Anosov flow
\cite{BBY17}. 
%\footnote{S: Humm.. I just realized that a finite lift of a suspension flow is still a suspension flow. So we should remove the finite lift operation..}

The main result of this article is the following:

\begin{theorem} \label{main0}
Let $\Phi$ be a generalized Franks-Williams Anosov flow
in a closed $3$-manifold $M$. Then $\Phi$ is a uniform quasigeodesic
flow.
\end{theorem}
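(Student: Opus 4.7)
The plan is to exploit the JSJ decomposition of $M$ induced by the gluing construction. Each hyperbolic plug $P_i$ of $M$ is a JSJ piece, and the gluing tori form the JSJ tori of $M$. Lifting to the universal cover $\widetilde{M}$, this decomposition becomes a tree-like family of simply connected blocks $\widetilde{P_i}$ glued along lifted planes $\widetilde{T_{ij}}$, and $\Phi$ lifts to a flow $\widetilde{\Phi}$ on $\widetilde{M}$ whose orbits are the objects whose quasigeodesic character I want to control.

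First I would establish quasigeodesic behavior inside each lifted plug. Each $P_i$ is obtained from a suspension Anosov flow by DA surgery on finitely many periodic orbits of uniform type (all attracting or all repelling) followed by removal of solid torus neighborhoods of the DA orbits. Since suspension flows are quasigeodesic by Zeghib's theorem, and since DA surgery is a local operation, I expect flow segments in $\widetilde{P_i}$ to be quasigeodesic with uniform constants in the ambient metric. The argument would compare the modified flow in each lifted plug to a corresponding piece of the lifted suspension, exploiting the fact that the flow is unchanged outside a small neighborhood of the DA orbits and that away from these neighborhoods the stable and unstable foliations coincide with those of the original suspension.

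Next I would analyze the crossings of the lifted JSJ planes. The DA construction forces each plug to be of one of two types: \emph{repelling} plugs whose boundary is outgoing for the flow, and \emph{attracting} plugs whose boundary is incoming; the BBY gluing matches outgoing to incoming boundaries. Consequently, a global flow line of $\widetilde{\Phi}$ traverses the tree of blocks in a monotone fashion, alternating between repelling and attracting blocks without backtracking. At each crossing of a lifted JSJ plane the time spent close to the plane is uniformly bounded by compactness of the gluing torus in $M$, which should yield uniform control on how much a flow line can deviate from a geodesic joining its entry and exit points.

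The main obstacle is assembling these local estimates into a single global quasigeodesic inequality, since an orbit may cross arbitrarily many lifted JSJ planes and errors incurred at each crossing could a priori accumulate. My plan, modeled on the approach behind Theorem~1.1 in the Seifert case, is to construct an auxiliary pseudo-metric on $\widetilde{M}$ adapted to the plug-and-gluing structure in which $\widetilde{\Phi}$-orbits are globally length minimizing up to bounded additive error, and then to compare this pseudo-metric to the lifted Riemannian metric by using the compactness of $M$ and the finiteness of the JSJ decomposition. The deepest technical difficulty will be ruling out long orbit segments that run almost parallel to a lifted JSJ plane: such segments satisfy the local estimate inside a single plug but could spoil a naive global comparison. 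The monotonicity of the plug-tree traversal described above, together with the transversality of $\widetilde{\Phi}$ to the lifted JSJ planes, should be the key tool needed to exclude this pathology and close the argument.
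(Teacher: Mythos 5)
Your first step (quasigeodesity of flow rays inside a single lifted plug, by comparison with the suspension flow) matches the paper, but your plan for the torus crossings rests on a structural misreading and misses the central estimate. Since every plug here is either attracting (all boundary incoming) or repelling (all boundary outgoing) and the gluing sends exit boundaries to entrance boundaries, an orbit that enters an attracting plug can never leave it: each orbit crosses \emph{at most one} gluing torus, passing from a repelling plug to an attracting one. There is no monotone traversal of a tree of blocks, no alternation between plug types, and no accumulation of errors over many crossings --- that difficulty does not exist. The difficulty that does exist, and that your outline does not address, is this: although by compactness of $M$ every orbit leaves a neighborhood of the gluing torus in bounded \emph{time}, in the universal cover the forward ray may stay at bounded \emph{distance} from the lifted torus $\widetilde{\T}$ for arbitrarily long arc length, because the lifted attractor itself can track $\widetilde{\T}$. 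So ``time spent near the plane is uniformly bounded'' controls the wrong quantity, and transversality of the flow to the tori (which holds everywhere by construction) does not rule out this tracking.

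The paper's key input (Proposition \ref{key}) is a quantitative escape estimate, proved by an explicit computation in a Solv-like metric adapted to the suspension: for points $q$ of the boundary torus lying outside a $\delta$-neighborhood of the two circles $\C_1\cup\C_2$ (the closed leaves of the Reeb foliation induced on the torus by the weak stable lamination), the forward ray satisfies $\ell(q)\le C\,\D(q,\widetilde{T}_q)+c$. This estimate genuinely fails on the bad region near $\C_1\cup\C_2$, so one cannot hope to have it for both half-rays of every crossing orbit. The argument is saved by the transversality hypothesis on the gluing map $\Omega$ (the image of the stable boundary foliation is transverse to the unstable one), which allows $\delta$ to be chosen so small that the bad regions seen from the two sides of the torus are disjoint; hence at least one of the two half-rays of any crossing orbit is ``good,'' and a two-case concatenation argument (splitting the orbit at the crossing point, comparing the lengths of the two halves, and using the triangle inequality together with the in-plug quasigeodesity for the possibly bad half) closes the proof. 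Without this good/bad dichotomy --- and the use of transversality of the two boundary \emph{foliations} under $\Omega$, rather than transversality of the flow to the tori --- your outline cannot exclude the pathological tracking that you correctly identify as the deepest difficulty.
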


\begin{remark}This result is new in the sense that the manifolds in question
are neither Seifert, nor hyperbolic, nor solvable. The JSJ decomposition
is not trivial. It is easy to prove that the supporting
manifolds of the hyperbolic plugs in question are atoroidal.
In addition, by a result of Leeb \cite{Lee95}, a Haken manifold with at least one atoroidal piece in the JSJ decomposition can be given a Rimennnian metric with non-positive sectional curvature, and hence the universal cover is $CAT(0)$ w.r.t. the induced path metric. 
Therefore the manifolds considered in Theorem \ref{main0} admit
$CAT(0)$ metrics.

\end{remark}

\subsection{Some ideas on the proof of Theorem \ref{main0}.}

First we mention a big difference from the case that the
manifold is hyperbolic.  
As we remarked previously, a lot of study has been done
on the quasigeodesic property for Anosov flows in hyperbolic
$3$-manifolds. In these manifolds a quasigeodesic satisfies
that in the universal cover it is at a bounded Hausdorff distance from
a geodesic. On hyperbolic manifolds geodesics are globally length minimizing curves
in the universal cover. 
The bounded distance property is strongly connected with
the quasigeodesic property, and in certain situations it is
an intermediate step in proving quasigeodesic behavior.

The manifolds in this article are not hyperbolic.
In particular a quasigeodesic in the universal cover may
not be at a bounded Hausdorff distance from a minimal geodesic.
This happens for example in Euclidean space.
In the examples we study, there might exist quasi-flats in the universal
cover $-$ possible examples are lifts of the gluing tori, and they behave like the Euclidean plane.

In this article, we analyze the flow in each
individual block,  and then analyze how the blocks
are assembled together.
In each block the flow is obtained from a blow up of
a suspension Anosov flow, hence it satisfies the 
quasigeodesic property when restricted to the block.
The much more complicated property is to analyze orbits
that cross the tori gluing different blocks. 
This turns out to be very intricate.
We produced our flows
so that we glue a collection of repellers to a collection
of attractors. It follows that an orbit not contained
in a plug intersects one (and only one) of the gluing tori, and goes
from being near a repeller to being near an attractor.
One potential problem is the following:
it could happen that the segments of an orbit on either
side of the gluing torus may track the torus for a long
time and go in opposite directions.
Lifting to the universal cover one produces a big length
along an orbit, but the distance between two points on the orbit may not be large enough compared to the length of orbit segment connecting them.
%in the universal cover
%as one can cross the lift of the torus near the endpoints
%of the lifted segment and reduce length tremendously.
In fact if one considers arbitrary gluings
on the gluing tori (and not just the ones generating
Anosov flows as in \cite{BBY17}), then this problem can occur and the flow is not quasigeodesic.
%one can in general do the gluings so that this
%becomes a real problem and the resulting flow is not
%quasigeodesic. 

We do a very careful analysis to show that when the resulting
flow is Anosov, then the flow lines are quasigeodesics.
One crucial step is related to the potential problem above:
we show that if a flow line intersect certain regions of a gluing
tori, then the forward half orbit `moves away efficiently'
from the lift of the torus when lifted to the universal
cover. In other words, in the manifold, the forward orbit
cannot keep tracking close to the torus for a long time.
This result is Proposition \ref{key} and it is the key component of the main result.     Of course this good behavior is not true for all orbits
intersecting the torus and there is a bad region as well.
In order to prove the quasigeodesic behavior we have to
consider both forward and backwards half orbits from 
points in the  gluing tori, and how they are pierced 
together.

%, which means, for any flow line $\gamma$ of the lifted flow in the universal cover, there exists copnstants $C>1$ and $c>0$ such that for any two points $a,b\in \gamma$, 
 %$$\text{length}(\gamma_{[a,b]})\leq d_{\MM}(a,b)$$

\section{Preliminaries}
A map $f:(X_1,d_1)\rightarrow (X_2,d_2)$ between two metric spaces $(X_1,d_1)$ and $(X_2,d_2)$ is a \textit{quasi-isometric embedding} if there exist two constants $C>1$ and $c>0$ such that, for any points $p_1,p_2\in X_1$,
$$\frac{1}{C}d_1(p_1,p_2)-c\leq d_2(f(p_1),f(p_2))\leq Cd_1(p_1,p_2)+c$$
A \textit{quasigeodesic} in $(X_1,d_1)$ is a quasi-isometric embedding of an interval in $\RR$ (with the standard metric)
in $(X_1,d_1)$; the interval can be any of the 
forms $[a,b], [a,b), (a,b]$ or $(a,b)\subset \RR$ where $a,b\in \RR\cup\{+\infty, -\infty\}$.
If $a$ or $b$ is contained in the interval, then we assume it is not 
either of $+\infty, -\infty$.

Suppose $\N$ is a closed Riemannian manifold with a Riemannian metric $g$ and let the path metric induced by $g$ be denoted
by $d_{g}(\ ,\ )$. A flow $\Phi_t$ on $\N$ with $C^1$-orbits is called \textit{quasigeodesic}, if each flow line $\gamma$ of the lifted flow $\widetilde{\Phi}_t$ in the universal cover $\NN$ is a quasi-isometric embedding of $\RR$ for some constants $C>1$ and $c>0$. 
The metric in $\RR$ is the path distance along the 
flow line. It is immediate that the quasigeodesic property of a flow line $\gamma$ in $\NN$ is equivalent to the following: 
 there exists $C>1$ and $c>0$ such that for any two points $a,b\in\gamma$, 
 $$\text{length}_{\widetilde{g}}(\gamma_{[a,b]})\leq Cd_{\widetilde{g}}(a,b)+c$$
 where $\gamma_{[a,b]}$ is the flow segment connecting $a,b\in\gamma$, $\widetilde{g}$ is the lift of the Riemannian metric $g$ on $\NN$ and $d_{\widetilde{g}}$ is the path metric on $\NN$ induced from $\widetilde{g}$. 

The definition is independent of the metric as the quasigeodesic property is preserved under quasi-isometric embeddings and  as our underlying manifold is compact, any two metrics in $\NN$ which are lifts from
metrics in $\N$  are quasi-isometric to each other. However
the quasi-isometry constants may change. 

If the same quasi-isometry constants $C>1$ and $c>0$ work for all the flow lines then we say that flow is \textit{uniformly} quasigeodesic. It is not true that every quasigeodesic flow is uniform. Notice however that
in closed hyperbolic manifolds,
 Calegari proved in \cite{Cal06} that every quasigeodesic flow 
is uniformly quasigeodesic.  

The focus of this article is to study
the quasigeodesic behavior of Anosov flows. 
\begin{definition}
A $C^1$-flow $\Phi_t:\M\rightarrow\M$ on a Riemannian manifold $\M$ is $Anosov$ if the tangent bundle $T\M$ splits into three $D\Phi_{t}$-invariant sub-bundles $T\M=E^{s}\oplus E^{0} \oplus E^u$ and there exists two constants $B,b >0$ such that 
	\begin{itemize}
		\item $E^0$ is generated by the non-zero vector field defined by the flow $\Phi_t$;
		\item For any $v\in E^s$ and $t>0$, $$||D\Phi_{t}(v)||\leq Be^{-b t}||v||$$
		\item For any $w\in E^u$ and $t>0$, $$||D\Phi_{t}(w)||\geq Be^{b t}||w||$$
	\end{itemize}
\end{definition}

	The definition is independent of the choice of the Riemannian metric $||.||$ as the underlying manifold $\M$ is compact.
	
	For a point $p\in \M$, we will denote
the flow line through $p$ by $\gamma_p$, i,e  $\gamma_p=\{\Phi_t(p)|t\in \RR\}$. The collection of all flow lines of a flow defines an one-dimensional  foliation on $\M$. For an Anosov flow there are several flow invariant foliations associated to the flow and these foliations play a key role in the study of Anosov flows.

	\begin{property}[\cite{Ano69}]
		For an Anosov flow $\Phi_t$ on $\M$, the distributions $E^u$, $E^s$, $E^{0}\oplus E^{u}$ and $E^{0}\oplus E^{s}$ are uniquely integrable. The associated foliations are denoted by $\F^u$, $\F^s$, $\F^{wu}$ and $\F^{ws}$ 
		respectively and they are called the strong unstable, strong stable, weak unstable and weak stable foliation on $\M$. 
	\end{property}
	
	We conclude this section by introducing the notion of \textit{orbit equivalence} between  two flows $\Phi_t^1$ and $\Phi_t^2$.
	
	\begin{definition}\label{d2}
	Two flows $\Phi_t^1:\M\rightarrow \M$ and $\Phi_t^2: \N \rightarrow \N$ are said to be \textit{orbit equivalent} if there exists a homeomorphism $h:\M\rightarrow \N$ such that there exists a continuous map $\tau:\N\times \RR\rightarrow \RR$ such that $h\circ \Phi^1_t\circ h^{-1}(x)=\Phi^2_{\tau(x,t)}(x)$ for all $x\in\N$. 
    \end{definition}
    
    An orbit equivalence maps orbits to orbits with a possible 
 time change. 
	
	\subsection*{Organization of the article:}
	In section \ref{S2}, we prove that geodesic flow on the unit tangent bundle of a hyperbolic surface is quasigeodesic. Moreover, Anosov flows on Seifert Fibered three manifolds are quasigeodesic. 
	
	In Section \ref{S3}, we describe the construction of generalized Franks-Williams flows and in section \ref{S4}, we describe the Riemannian metric we are going to work with in this article. 
	
	Section \ref{S5} contains the proof of Theorem \ref{main0}; Subsection \ref{plug} proves the key proposition for the proof and Subsection \ref{S5.2} completes the proof.

%	{\bf {For the remainder of this article we will assume that $\M$ is a closed three dimensional Riemannian manifold.}}

\section{Anosov flows in Seifert manifolds and quasigeodesic
behavior}\label{S2}
Suppose $\Sigma$ is a hyperbolic surface and $T\Sigma$ denotes its tangent bundle, i,e $T\Sigma=\{(p,v)|p\in\Sigma,v\in T_p\Sigma\}$. The universal cover of $\Sigma$ is the hyperbolic plane, we will consider the \text{ P\'oincare upper-half plane} model $\HH$ in this article, i,e.
$$\HH=\{(x,y)\in\RR^2|y>0\}\text{ with the Riemannian metric }ds^2=\frac{dx^2+dy^2}{y^2}$$

On the tangent bundle $T\Sigma$ we can define the \textit{geodesic vector field} w.r.t. the metric $ds$; this is a classical construction, in this article we follow the notations and the detailed description as in \cite[Chapter 3]{doC88}. 

\begin{definition}
The \textit{geodesic field} is defined to be the unique vector field $G$ on $T\Sigma$ whose trajectories are of the form $(\gamma(t),\gamma'(t))$ where $\gamma$ is a geodesic on $\Sigma$ w.r.t. $ds$.   

The flow $\mathfrak{G}_t$ of the geodesic field is called the \textit{geodesic flow} on $T\Sigma$. 
\end{definition}

Suppose $\widetilde{\mathfrak{G}}_t$ is the lift of the geodesic flow on the universal cover $\widetilde{T\Sigma}=T\HH=\HH\times\RR^2$. We show that if $(\gamma(t),\gamma'(t))$ is a flow line of $\widetilde{\mathfrak{G}}_t$ on $T\HH=\HH\times\RR^2$ then it is a quasigeodesic in $T\HH$.

First we choose an appropriate metric on $\widetilde{T\Sigma}$ which projects down to $T\Sigma$. Consider the projection map $\pi:T\HH\rightarrow \HH$. We can define a metric on $T\HH$ using the projection $\pi$ and the metric $ds$ on $\HH$ as described in \cite[Chapter 3, Exercise 2]{doC88}, here is the detailed description:

Suppose $(p,v)\in T\HH$, consider $\alpha_1:t\rightarrow (p_1(t),v_1(t))$ and  $\alpha_2:t\rightarrow (p_2(t),v_2(t))$ where $p_1(0)=p_2(0)=p\in\HH$ and $v_1(0)=v_2(0)=v\in T_p(\HH)$. Let $V_1=\alpha_1'(0)$ and $V_2=\alpha_2'(0)$. Then define the inner product as

$$\langle V_1,V_2\rangle_{(p,v)}=\langle d\pi(V_1),d\pi(V_2)\rangle_p+\langle\frac{Dv_1}{dt}(0),\frac{Dv_2}{dt}(0)\rangle_p$$

\noindent
where $\langle,\rangle_p$ is given by the metric $ds$ on $\HH$ and $\frac{D}{dt}$ denotes the \textit{covariant derivative} as defined in \cite[Proposition 2.2]{doC88}.

Clearly the metric on $T\HH$ as defined above projects down to $T\Sigma$ as the metric $ds$ on $\HH$ projects down to $\Sigma$. Abusing the notation, we denote metric on $T\Sigma$ by $ds$. 

Next we prove that $\mathfrak{G}_t$ on $T\Sigma$ is a \textit{quasigeodesic flow}. Suppose $(\gamma(t),\gamma'(t))$ is a flow line of $\widetilde{\mathfrak{G}}_t$ on $T\HH$, let $(p,v)$ and $(q,w)$ be two points on $(\gamma,\gamma')$. Consider a curve $\zeta:t\rightarrow(\zeta_1(t),\zeta_2(t))\in T\HH$ on $t\in[0,1]$ such that $\zeta(0)=(p,v)$ and $\zeta(1)=(q,w)$ and $\zeta_2(t)\in T_{\zeta_1(t)}\HH$ for all $t\in[0,1]$. Then

\begin{equation}\label{eqn0}
\begin{split}
\text{length}(\zeta)&=\int_0^1 ||\zeta'(t)||dt\\
    &=\int^1_0\sqrt{||\zeta_1'(t)||^2+||\frac{D\zeta_2(t)}{dt}||^2}dt\\
    &\geq \int^1_0 \sqrt{||\zeta_1'(t)||^2}dt=\text{length}(\zeta_1)
\end{split}
\end{equation}
 
Note that $\zeta_1$ is a curve on $\HH$ connection $p,q\in\HH$. But $\gamma$ is a geodesic on $\HH$ and geodesics on $\HH$ are globally length minimizing on $\HH$, which means
$$\text{length}(\zeta_1)\geq\text{length}(\gamma)\text{ between }p,q\in\HH$$

As $\gamma$ is a geodesic, the covariant derivative of $\gamma$ vanishes by definition, i,e $\frac{D\gamma'}{dt}=0$. Using the fact that $\frac{D\gamma'}{dt}=0$ and the Riemannian metric on $T\HH$, it is easy to verify that,

$$\text{length}(\gamma,\gamma')=\text{length}(\gamma)\text{ between }p,q\in\HH$$

Replacing in \ref{eqn0} we conclude that between $(p,v),(q,w)\in T\HH$,
$$\text{length}(\zeta)\geq\text{length}(\zeta_1)\geq\text{length}(\gamma)=\text{length}(\gamma,\gamma')$$

As the choice of $(\gamma,\gamma')$ and $\zeta$ were arbitrary, the above inequality implies that the flow lines of $\widetilde{\mathfrak{G}}_t$ are globally length minimizing in $T\HH$, a stronger property than being a quasigeodesic.
In other words, we proved the following:

\begin{theorem}\label{gf}
The flow lines of the geodesic flow $\widetilde{\mathfrak{G}}_t$ on $T\HH$ are globally length minimizing.

If $\Sigma$ is a hyperbolic surface then the geodesic flow on $T\Sigma$ is a \text{quasigeodesic} flow. 
\end{theorem}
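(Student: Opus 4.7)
My plan is to prove the stronger statement first — that each flow line of $\widetilde{\mathfrak{G}}_t$ in $T\HH$ is globally length minimizing — and then read off the quasigeodesic property on $T\Sigma$ as an immediate corollary. Once flow lines of the lifted flow realize distances in $(T\HH,\widetilde{ds})$, they satisfy the quasi-isometric embedding condition trivially with constants $C=1$, $c=0$, so the geodesic flow on the compact unit tangent bundle (where the Sasaki-type metric descends) is automatically quasigeodesic in the sense of the preliminaries.

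To prove length-minimization in $T\HH$, I would fix two points $(p,v),(q,w)$ on a common flow line $t\mapsto(\gamma(t),\gamma'(t))$ and compare the flow segment with an arbitrary piecewise $C^1$ competitor $\zeta(t)=(\zeta_1(t),\zeta_2(t))$ joining them on $[0,1]$. The essential feature of the metric introduced in the preamble is the orthogonal decomposition into a horizontal piece (pulled back by $d\pi$ from $(\HH,ds)$) and a vertical piece (measured through the covariant derivative), so pointwise
\[
\|\zeta'(t)\|^2 = \|\zeta_1'(t)\|^2 + \left\|\tfrac{D\zeta_2}{dt}\right\|^2 \geq \|\zeta_1'(t)\|^2.
\]
Integrating yields $\text{length}(\zeta)\geq\text{length}(\zeta_1)$, reducing the problem to comparing base projections.

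Now I would invoke two properties that together close the argument. First, $\zeta_1$ is a curve in $\HH$ from $p$ to $q$, and $\gamma$ is a hyperbolic geodesic with the same endpoints, so $\text{length}(\zeta_1)\geq \text{length}_\HH(\gamma)$ because geodesics in $\HH$ are globally length minimizing. Second, the flow line $(\gamma,\gamma')$ is horizontal in the Sasaki sense: the geodesic equation $D\gamma'/dt=0$ kills the vertical contribution, so its length in $(T\HH,\widetilde{ds})$ coincides with $\text{length}_\HH(\gamma)$. Chaining,
\[
\text{length}(\zeta)\geq \text{length}(\zeta_1) \geq \text{length}_\HH(\gamma) = \text{length}(\gamma,\gamma'),
\]
which, since $\zeta$ was arbitrary, establishes the first statement. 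The quasigeodesic assertion on $T\Sigma$ is then immediate: the chosen metric on $T\HH$ is the lift of the corresponding metric on $T\Sigma$, and in the universal cover the lifted orbits were just shown to be globally distance minimizing, hence uniformly quasigeodesic with constants $(C,c)=(1,0)$.

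I do not foresee a genuine obstacle. The only points that deserve care are unpacking the Sasaki-type metric precisely as in \cite[Chapter 3]{doC88} so that the orthogonal splitting into horizontal and vertical parts is actually orthogonal, and verifying the identity $\text{length}(\gamma,\gamma')=\text{length}_\HH(\gamma)$, which is a one-line consequence of $D\gamma'/dt=0$ together with the definition of the metric. Everything else is direct comparison of curve lengths and the standard minimization property of geodesics in hyperbolic space.
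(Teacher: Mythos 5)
Your proposal is correct and follows essentially the same route as the paper: the orthogonal splitting of the Sasaki-type metric gives $\mathrm{length}(\zeta)\geq\mathrm{length}(\zeta_1)$, global minimality of geodesics in $\HH$ and the vanishing of $D\gamma'/dt$ give $\mathrm{length}(\zeta_1)\geq\mathrm{length}(\gamma)=\mathrm{length}(\gamma,\gamma')$, and the quasigeodesic property on $T\Sigma$ follows with constants $(1,0)$. No substantive differences from the paper's argument.
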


\subsection{Geodesic flows on unit tangent bundle}

We note that the flow lines of the geodesic flow on $T\Sigma$ are of the form $(\gamma,\gamma')$. As $\gamma$ is a geodesic on $\Sigma$ we get $\frac{d}{dt}\langle\gamma'(t),\gamma'(t)\rangle=0$ i,e $||\gamma'(t)||$=constant. This property allows us to restrict the flow $\mathfrak{G}_t$ on $T\Sigma$ to the unit tangent bundle $S\Sigma$ where
$$S\Sigma=\{(p,v)|p\in\Sigma,v\in T_p\Sigma, ||v||=1\}$$
Similarly we can restrict the flow $\widetilde{\mathfrak{G}}_t$ on $S\HH$, the unit tangent bundle on $\HH$. It is immediate by Theorem \ref{gf} that the flow lines of the geodesic flow on $S\HH$ are globally length minimizing. 

It is clear that $S\HH$ is a cover of $S\Sigma$, though it is not the universal cover. As the flow lines of the geodesic flow on $S\HH$ are globally length minimizing and $S\HH$ is complete, lifts of the flowlinws of the geodesic flow in the universal cover $\widetilde{S\HH}=\HH\times\RR$ are also globally length minimizing, a stronger property than being quasigeodesic. 

So far we have considered the metric $ds^2=\frac{dx^2+dy^2}{y^2}$ on $\HH$, and the geodesics and geodesic flow on a surface completely depend on the choice of Riemannian metric. But geodesic flows associated with 
any two negatively curved metric on a surface are orbit equivalent \cite{Ghy84}. More precisely, there is a homeomorphism between the unit tangent bundles of the respective Riemannian metrices which takes orbits to orbit as described in Definition \ref{d2}.  It is easy to check that any homeomorphism between two compact manifolds gives a quasi-isometry when lifted to the universal covers. In particular, as unit tangent bundles of negatively curved closed surfaces are compact, the orbit equivalence maps are quasi-isometries between the universal covers; and quasi-isometries preserve quasigeodesics. This implies geodesic flow w.r.t. any negatively curved metric on a closed surface is quasigeodesic. We conclude the following theorem,

\begin{theorem}\label{UT}
If $\Sigma$ is a hyperbolic surface, the geodesic flows in the unit tangent bundle $S\Sigma$ is quasigeodesic. More precisely, the flow lines in the universal cover are globally length minimizing w.r.t. the metric $ds$.
\end{theorem}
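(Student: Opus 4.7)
The plan is to assemble what has already been proved in this section and then transfer to the universal cover and to arbitrary hyperbolic metrics. I would organize the argument in three steps.

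First, I would observe that Theorem \ref{gf} already gives global length minimization for flow lines of $\widetilde{\mathfrak{G}}_t$ on $T\HH$, and that the energy identity $\frac{d}{dt}\langle \gamma'(t),\gamma'(t)\rangle = 0$ along any geodesic makes the unit tangent bundle $S\HH \subset T\HH$ an invariant submanifold. The restriction of the Sasaki-type metric on $T\HH$ to $S\HH$ is the intrinsic metric on $S\HH$, and the same computation as in \eqref{eqn0}, applied to curves $\zeta$ that stay in $S\HH$, shows that any flow segment $(\gamma,\gamma')|_{[a,b]}$ in $S\HH$ is at least as short as any curve in $S\HH$ with the same endpoints. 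Thus flow lines of $\widetilde{\mathfrak{G}}_t$ in $S\HH$ are globally length minimizing.

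Next, I need to pass from $S\HH$ to the universal cover $\widetilde{S\HH} = \HH \times \RR$, since $S\HH$ only covers $S\Sigma$ with circle fibers, not simply-connectedly. Let $p : \widetilde{S\HH} \to S\HH$ be the covering map, equip $\widetilde{S\HH}$ with the lifted metric, and let $\widetilde{\gamma}$ be a lifted geodesic flow line with endpoints $a,b$. Given any curve $\zeta$ in $\widetilde{S\HH}$ from $a$ to $b$, its projection $p\circ\zeta$ is a curve in $S\HH$ from $p(a)$ to $p(b)$ of equal length, and $p\circ\widetilde{\gamma}|_{[a,b]}$ is a flow segment of the same length as $\widetilde{\gamma}|_{[a,b]}$. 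By the previous step,
\[
\text{length}(\zeta) = \text{length}(p\circ\zeta) \geq \text{length}(p\circ\widetilde{\gamma}|_{[a,b]}) = \text{length}(\widetilde{\gamma}|_{[a,b]}),
\]
so $\widetilde{\gamma}$ is globally length minimizing in $\widetilde{S\HH}$. In particular, it is a quasigeodesic with constants $C=1$, $c=0$, so the geodesic flow on $S\Sigma$ (with the metric $ds$) is quasigeodesic.

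Finally, to handle an arbitrary hyperbolic surface $\Sigma$ and not merely the Poincar\'e model metric, I invoke the Ghys orbit equivalence theorem \cite{Ghy84}: the geodesic flows of any two negatively curved Riemannian metrics on a closed surface are orbit equivalent via a homeomorphism $h$ of the unit tangent bundles. Since both $S\Sigma$'s are closed, $h$ lifts to a quasi-isometry $\widetilde{h}$ between the universal covers, and quasi-isometries preserve the quasigeodesic property (with possibly worse constants). Applying this to the base case of the Poincar\'e metric gives the conclusion for every hyperbolic metric on $\Sigma$. The only real obstacle I anticipate is the second step: one has to be careful that the covering $\widetilde{S\HH} \to S\HH$ is a local isometry and complete so that length is preserved in both directions under $p$, but this is automatic from the construction of the Sasaki metric and does not require any extra work.
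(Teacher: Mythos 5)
Your proposal is correct and follows essentially the same route as the paper: restrict the global minimization from $T\HH$ (Theorem \ref{gf}) to the invariant submanifold $S\HH$, push the minimization up to the universal cover $\widetilde{S\HH}=\HH\times\RR$ via the length-preserving covering projection, and handle general negatively curved metrics through Ghys' orbit equivalence combined with the fact that homeomorphisms of compact manifolds lift to quasi-isometries of universal covers. Your write-up is in fact slightly more explicit than the paper's (which simply asserts the passage to $\widetilde{S\HH}$), but the ideas and their order are the same.
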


\subsection{Anosov flows in Seifert manifolds}

Now we prove that any Anosov flow on a Seifert fibered three manifold is quasigeodesic. The following theorem relates Anosov flows on Seifert fibered three manifolds with geodesic flows.

\begin{theorem}\cite{Bar96}\label{SF}
Any Anosov flow on a closed Seifert fibered space 
is orbit equivalent to a
finite lift of a  geodesic flow on a hyperbolic surface.
\end{theorem}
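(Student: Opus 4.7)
The plan is to exploit the center of $\pi_1(M)$ generated by the Seifert fiber and its interaction with the Anosov dynamics on the orbit space of the lifted flow. I would proceed in roughly four steps.

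First, I would reduce to the case where the base orbifold $\Sigma$ of the Seifert fibration $\pi:M\to\Sigma$ is hyperbolic. If the base is spherical then $\pi_1(M)$ is virtually trivial and no Anosov flow exists (an Anosov flow forces exponential orbit growth). If the base is Euclidean then $M$ is finitely covered by $T^3$ or a small nilmanifold; the standard classification shows any Anosov flow there is orbit equivalent to a suspension of a hyperbolic toral automorphism, and such suspensions are orbit equivalent to a finite cover of the geodesic flow on the modular orbifold. Thus I restrict to $\Sigma$ hyperbolic.

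Second, I would pass to the universal cover $\widetilde{M}$ and consider the orbit space $\mathcal{O}=\widetilde{M}/\widetilde{\Phi}$, which by Fenley--Barbot is homeomorphic to $\mathbb{R}^2$ and carries two transverse one-dimensional topological foliations $\mathcal{O}^{ws},\mathcal{O}^{wu}$ induced from the weak stable and weak unstable foliations of $\Phi$. The group $\pi_1(M)$ acts on $\mathcal{O}$ by homeomorphisms preserving both foliations. After passing to a finite cover to remove exceptional fibers, the regular Seifert fiber represents an element $h$ in the center of $\pi_1(M)$; centrality forces the induced action of $h$ on $\mathcal{O}$ to commute with the full deck action.

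Third, and this is the heart of the argument, I would show that $h$ acts as the identity on $\mathcal{O}$. Centrality together with minimality of the $\pi_1(M)$-action on $\mathcal{O}$ already implies that each orbit of $\pi_1(M)$ is $h$-invariant as a set. The quotient $\pi_1(M)/\langle h\rangle$ is the non-elementary hyperbolic Fuchsian group $\pi_1(\Sigma)$, so the induced action on $\mathcal{O}$ has a rich dynamical structure along the two transverse foliations. Hyperbolicity of $\Phi$ (transverse expansion/contraction in the two foliations) combined with compactness of $M$ rules out any nontrivial translation-type behavior of $h$ along leaves of $\mathcal{O}^{ws}$ or $\mathcal{O}^{wu}$; any fixed ray under $h$ would be coherently oriented and forced by the Fuchsian dynamics to contradict the Anosov splitting. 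One concludes that $h$ fixes $\mathcal{O}$ pointwise.

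Once $h$ acts trivially on $\mathcal{O}$, the quotient $\widetilde{M}/\langle h\rangle$ fibers in circles over $\mathcal{O}$, the flow descends with the same orbit space $\mathcal{O}$, and the remaining $\pi_1(\Sigma)$-action is faithful. One then identifies $(\mathcal{O},\mathcal{O}^{ws},\mathcal{O}^{wu})$ with the orbit space of the geodesic flow on $\mathbb{H}^2$ together with its horocyclic foliations, using the classical fact that a pair of transverse topological foliations on $\mathbb{R}^2$ invariant under a cocompact Fuchsian action is equivariantly homeomorphic to this model (convergence-group style rigidity). Promoting this equivariant homeomorphism of orbit spaces to an orbit equivalence of flows, and reintroducing the central $S^1$-factor, yields the desired orbit equivalence to a finite lift of the geodesic flow on $T^1\Sigma$. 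The main obstacle is precisely Step 3: ruling out exotic behavior of the central element on $\mathcal{O}$ is where the delicate combinatorial-dynamical analysis of Barbot takes place.
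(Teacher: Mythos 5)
A point of order first: the paper does not prove this statement. It is quoted from Barbot \cite{Bar96} (building on Ghys \cite{Ghy84} for circle bundles) and used as a black box, so there is no in-paper argument to compare yours with; I am judging your sketch against the known proof. At the level of strategy your outline is the right one --- reduce to hyperbolic base orbifold, pass to the orbit space $\mathcal{O}\cong\mathbb{R}^2$ with its two invariant singular-free transverse foliations, show the central fiber class $h$ acts trivially on $\mathcal{O}$, and identify the residual $\pi_1(\Sigma)$-action with the model coming from the geodesic flow. But two of your steps are genuinely defective.

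The Euclidean-base reduction is wrong as stated. A Seifert manifold with Euclidean (or spherical) base orbifold has virtually nilpotent fundamental group of polynomial growth, so by Plante--Thurston it carries no Anosov flow at all: that case is vacuous, and that is how it must be dismissed. Your alternative --- that such an $M$ is finitely covered by $T^3$ or a nilmanifold and that Anosov flows there are suspensions of hyperbolic toral automorphisms orbit equivalent to a geodesic flow --- is false twice over: $T^3$ and nilmanifolds admit no Anosov flows, and those suspensions live on Sol-manifolds, which are not Seifert fibered and are not orbit equivalent to finite lifts of geodesic flows. More seriously, Step 3, which you correctly identify as the heart, is asserted rather than proved. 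The remark that each $\pi_1(M)$-orbit is $h$-invariant is vacuous (since $h\in\pi_1(M)$), and ``any fixed ray under $h$ would be coherently oriented and forced by the Fuchsian dynamics to contradict the Anosov splitting'' is not an argument. The actual mechanism is the local saddle dynamics in $\mathcal{O}$ at fixed points of elements $g$ representing periodic orbits: $h$ commutes with every such $g$, hence preserves $\operatorname{Fix}(g)$ together with its stable and unstable separatrices; one must first produce a fixed point of $h$ and then use density of periodic orbits to force $h=\mathrm{id}$ on $\mathcal{O}$. Finally, the ``classical fact'' invoked in Step 4 --- that a cocompact Fuchsian action on $\mathbb{R}^2$ preserving two transverse foliations is equivariantly conjugate to the geodesic-flow model --- is essentially the theorem being proved, not an off-the-shelf rigidity statement, and promoting an equivariant homeomorphism of orbit spaces to an orbit equivalence of flows also requires a (standard but nontrivial) reconstruction argument. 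As a roadmap your sketch is faithful to Barbot; as a proof it is not yet one.
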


We combine Theorem \ref{UT} and Theorem \ref{SF} to get the following:
  
\begin{theorem}
If $\Phi_t$ is an Anosov flow on a Seifert fibered three manifold $\N$, then $\Phi_t$ is quasigeodesic.
\end{theorem}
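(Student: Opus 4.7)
The plan is to combine Theorem \ref{SF} with Theorem \ref{UT} and show that the orbit equivalence transfers the quasigeodesic property from the model geodesic flow to the given Anosov flow. The quasigeodesic property, as formulated in the preliminaries, is a condition on the image curves in the universal cover, comparing their path-length to the ambient distance between their endpoints; in particular, it is insensitive to time reparametrization along flow lines, which is exactly what an orbit equivalence may introduce.

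First I would invoke Theorem \ref{SF} to produce a finite cover $\N' \to S\Sigma$ of the unit tangent bundle of some hyperbolic surface $\Sigma$, together with the lifted geodesic flow $\mathfrak{G}'_t$ on $\N'$, and a homeomorphism $h:\N \to \N'$ realizing an orbit equivalence between $\Phi_t$ and $\mathfrak{G}'_t$ in the sense of Definition \ref{d2}. Because $\N' \to S\Sigma$ is a finite cover, the two spaces share the same universal cover $\widetilde{S\Sigma}$, and the lift $\widetilde{\mathfrak{G}'_t}$ of $\mathfrak{G}'_t$ to $\widetilde{\N'} = \widetilde{S\Sigma}$ coincides with the lift $\widetilde{\mathfrak{G}_t}$ of the geodesic flow. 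Theorem \ref{UT} then tells us that each flow line of $\widetilde{\mathfrak{G}'_t}$ is globally length minimizing in $\widetilde{\N'}$ with respect to the lifted metric; in particular, every such flow line is a quasigeodesic with constants $C=1$, $c=0$.

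Next I would transport this property back to $\N$ through $h$. Since $h:\N \to \N'$ is a homeomorphism between closed Riemannian manifolds, it lifts to a homeomorphism $\widetilde{h}:\widetilde{\N} \to \widetilde{\N'}$ equivariant under the deck group actions. A standard Svarc--Milnor style argument (or simply the observation that $h$ and $h^{-1}$ are uniformly continuous on compact sets) shows that $\widetilde{h}$ is a quasi-isometry between the lifted path metrics $d_{\widetilde{g}}$ and $d_{\widetilde{g}'}$. The orbit equivalence sends each orbit of $\widetilde{\Phi}_t$ to an orbit of $\widetilde{\mathfrak{G}'_t}$ bijectively (with possible time change), so if $\gamma$ is a flow line of $\widetilde{\Phi}_t$ and $a,b \in \gamma$, then $\widetilde{h}(\gamma)$ is a flow line of $\widetilde{\mathfrak{G}'_t}$ containing $\widetilde{h}(a)$ and $\widetilde{h}(b)$, and one can apply the quasi-isometry inequality together with the length-minimizing property of $\widetilde{h}(\gamma)$ to bound $\mathrm{length}_{\widetilde{g}}(\gamma_{[a,b]})$ above by a linear function of $d_{\widetilde{g}}(a,b)$. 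Concretely, lengths of curves transform by a multiplicative constant under a bi-Lipschitz-up-to-additive-error quasi-isometry on the compact manifold level, and combined with $\mathrm{length}(\widetilde{h}(\gamma)_{[\widetilde{h}(a),\widetilde{h}(b)]}) = d_{\widetilde{g}'}(\widetilde{h}(a),\widetilde{h}(b))$ this yields the required inequality for $\widetilde{\Phi}_t$.

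I expect no serious obstacle here: the result is essentially a packaging of Theorem \ref{UT}, Theorem \ref{SF}, and the general fact that quasigeodesics are preserved under quasi-isometries. The only mildly delicate point is bookkeeping the length comparison under $\widetilde{h}$, since $\widetilde{h}$ need not be smooth and one cannot directly compare $\|\widetilde{h}_*v\|$ with $\|v\|$ pointwise. This is handled by working with path metrics and using compactness of $\N$ and $\N'$ to replace the infinitesimal comparison by a global bi-Lipschitz-up-to-additive-error statement, after which the quasigeodesic inequality for $\Phi_t$ follows mechanically from the length-minimizing inequality for $\mathfrak{G}'_t$.
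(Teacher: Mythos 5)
Your overall strategy is the same as the paper's: invoke Theorem \ref{SF} to obtain an orbit equivalence $h$ onto a finite lift of the geodesic flow, use Theorem \ref{UT} to say that the image flow segments are globally length minimizing, and pull the estimate back through the lifted map $\widetilde h$. The final step, converting $d_{\widetilde g'}(\widetilde h(a),\widetilde h(b))$ into $d_{\widetilde g}(a,b)$ via the quasi-isometry property of $\widetilde h$, is also exactly what the paper does. However, there is a genuine gap in the middle step, where you need an inequality of the form
$$\mathrm{length}_{\widetilde g}(\gamma_{[a,b]}) \ \leq \ A\,\mathrm{length}_{\widetilde g'}\bigl(\widetilde h(\gamma)_{[\widetilde h(a),\widetilde h(b)]}\bigr)+B.$$
You justify this by asserting that ``lengths of curves transform by a multiplicative constant under a bi-Lipschitz-up-to-additive-error quasi-isometry.'' That is false: a quasi-isometry controls distances between pairs of points, not lengths of curves. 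The length of a curve is the supremum of $\sum_i d(x_i,x_{i+1})$ over arbitrarily fine partitions, and applying the quasi-isometry inequality term by term produces an additive error proportional to the number of partition points, which blows up as the partition refines. A homeomorphism of compact manifolds (and its equivariant lift) can increase the length of a curve without bound while remaining a quasi-isometry of the lifted path metrics, so neither uniform continuity nor the coarse bi-Lipschitz statement yields the inequality you need.

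The correct mechanism, which is the actual content of the paper's proof, uses the flow structure rather than the quasi-isometry at this step. By compactness of $\N$ one fixes $\eta_1,\eta_2>0$ such that every flow segment of $\widetilde g$-length at least $\eta_1$ has image under $\widetilde h$ of length at least $\eta_2$: the image of a definite-length orbit segment is a nondegenerate orbit segment of the geodesic flow, and its length varies over a compact family, hence is bounded below away from zero. One then subdivides $\gamma_{[a,b]}$ into consecutive pieces of $\widetilde g$-length $\eta_1$ (plus one shorter piece); their images are consecutive subsegments of the image flow line, each of length at least $\eta_2$, so the number of pieces, and hence $\mathrm{length}_{\widetilde g}(\gamma_{[a,b]})$, is bounded by $\tfrac{\eta_1}{\eta_2}$ times the length of the image segment plus $\eta_1$. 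Only after this definite-scale comparison do the length-minimizing property from Theorem \ref{UT} and the quasi-isometry property of $\widetilde h$ finish the argument. Without some version of this step your proof does not close.
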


\begin{proof}
 By Theorem \ref{SF}, $\Phi_t$ is orbit equivalent to a finite lift of the geodesic flow $\mathfrak{G}_t$ on the unit tangent bundle $S\Sigma$ of a hyperbolic surface $\Sigma$. We denote the finite lift of $S\Sigma$ by $\widehat{S\Sigma}$ and the finite lift of the  geodesic flow by $\widehat{\mathfrak{G}}_t$. 

 Fix a Riemannian metric $g$ on $\N$. Let $\widehat{ds}$ be the
metric on $\widehat{S\Sigma}$, $\widehat{ds}$ which
is the lift of the metric $ds$ as constructed before using the upper-half plane $\HH$. 
This is the metric for which Theorem \ref{SF} holds.
We denote the path metrics induced by the lifts of the metrices, $\widetilde{g}$ and $\widetilde{ds}$ on $\NN$ and $\widetilde{S\Sigma}$ resp., by $d_1$ and $d_2$.

%As $\mathfrak{G}_t$ is quasigeodesic by Theorem \ref{UT}, any finite lift of the flow is also quasigeodesic as we check quasigeodesic behavior in the universal cover. 
%Therefore there are constants $K \geq 1, c > 0$ so that
%for any $x, y \in \widetilde{S\Sigma}$ in the same flow line of
%$\widetilde{\mathfrak{G}_t}$, then 
%
%\begin{equation} \label{SF.0}
%\text{length}_{\wds}(x,y) \leq K d_2(x,y) + c
%\end{equation}

Fix an orbit equivalence $h:\N\rightarrow \widehat{S\Sigma}$, let $\widetilde{h}:\NN\rightarrow \widetilde{S\Sigma}$ be a lift of $h$ to the universal covers.
By the compactness of $\N$ and $\widehat{S\Sigma}$, we can fix $\eta_1, \eta_2\text > 0$ such that for any $x,y\in\NN$ lying on the same flow line $\gamma$,
%\begin{equation}\label{SF.1}
%     \text{if length}_{\widetilde{g}}(\gamma_{[x,y]})\leq \eta_1\text{ then length}_{\widetilde{g}}(\gamma_{[x,y]})\leq \eta_2\text{length}_{\widetilde{ds}}(\h(\gamma_{[x,y]}))+\epsilon_3
%\end{equation}

%Moreover we can assume, there exist $\eta_4>0$ such that
\begin{equation}\label{SF.2}
    \text{if length}_{\widetilde{g}}(\gamma_{[x,y]})\geq \eta_1,\ \text{ then length}_{\widetilde{ds}}(\h(\gamma_{[x,y]}))\geq\eta_2
\end{equation}
Consider any two points $a_1,a_2\in\NN$ such that they are on the same flow line of $\gamma$ of $\widetilde{\Phi}_t$. 
Let
$n \ = \ [\text{length}_{\wg}(\gamma_{[a_1,a_2]})]$.

Let $b_0=a_1, b_2,...,b_{n+1}=a_2$ be the points in $\gamma$
such that $\text{length}_{\widetilde{g}}(\gamma_{[b_i,b_{i+1}]})=\eta_1$ for all $0\leq i\leq n-1$ and $\text{length}_{\widetilde{g}}(\gamma_{[b_{n},b_{n+1}]})< \eta_1$. 
Here $b_n = b_{n+1}$ if $\text{length}_{\wg}(\gamma_{[a_1,a_2]})$
is an integer and $b_n \not = b_{n+1}$ otherwise.
Then we get,

\begin{equation}\label{SF.3}
    \begin{split}
        \text{length}_{\widetilde{g}}(\gamma_{[a_1,a_2]})&=\sum_{0\leq i\leq n}\text{length}_{\widetilde{g}}(\gamma_{[b_i,b_{i+1}]})\\
	&= \left(\sum_{0 \leq i \leq n-1} \eta_1\right) + \text{length}_{\wg}(\gamma_{[b_n,b_{n+1}]})\\
	&\leq \left(\sum_{0 \leq i \leq n-1} \eta_1\right) + \eta_1 \ \ 
	= \left(\frac{\eta_1}{\eta_2} \sum_{0 \leq i \leq n-1} \eta_2\right)+ \eta_1\\
	&\leq \left(\frac{\eta_1}{\eta_2}\sum_{0 \leq i \leq n-1} \text{length}_{\wds}(\h(\gamma_{[b_i,b_{i+1}]})\right) + \eta_1\\  
	&\leq \frac{\eta_1}{\eta_2} \text{length}_{\wds}(\h_(\gamma_{[\h(a_1),\h(a_2)]})) + \eta_1\\
	&= \frac{\eta_1}{\eta_2} d_2(\h(a_1),\h(a_2)) + \eta_1, \ \ \text{ by Theorem }\ref{UT}
% &\leq \sum_{1\leq i\leq n-1} (\eta_2\text{length}_{\widetilde{ds}}(\h(\gamma_{[b_i,b_{i+1}]}))+\eta_3)\ \ \text{ by }\ref{SF.1}\\
%        &=\eta_2\text{length}_{\widetilde{ds}}(\h(\gamma_{[b_1,b_n]})+(n-1)\eta_3\\
%        &=\eta_2\text{length}_{\widetilde{ds}}(\h(\gamma_{[b_1,b_n]})+(n-1)\eta_4\cdot\frac{\eta_3}{\eta_4}\\
%        &\leq\eta_2\text{length}_{\widetilde{ds}}(\h(\gamma)_{[\h(a_1),\h(b_2)]})+\text{length}_{\widetilde{ds}}(\h(\gamma)_{[\h(a_1),\h(a_2)]})\cdot\frac{\eta_3}{\eta_4}\text{ by }\ref{SF.2}\\
%%        &=(\eta_2+\frac{\eta_3}{\eta_4})\text{length}_{\widetilde{ds}}(\h(\gamma)_{[\h(a_1),\h(a_2)]})\\
%        &=(\eta_2+\frac{\eta_3}{\eta_4})(d_{\widetilde{ds}}(\h(a_1),\h(a_2))\text{ by Theorem }\ref{UT}
    \end{split}
\end{equation}
Finally, as $h:\N\rightarrow \widehat{S\Sigma}$ is a homeomorphism
between compact manifolds, the lifts to the universal
covers induce
quasi-isometries between the universal covers. Hence there exists $\eta_3>1$ and $\eta_4 > 0$ such that the map $\h^{-1}:(\widetilde{S\Sigma},d_{\widetilde{ds}})\rightarrow(\NN,d_{\widetilde{g}})$ is an $(\eta_3,\eta_4)$-quasi-isometry. 

Applying the quasi-isometry $\h$ on $\ref{SF.3}$, we get
$$\text{length}_{\widetilde{g}}(\gamma_{[a_1,a_2])})\ \leq \ \frac{\eta_1}{\eta_2}d_2(\h(a_1),\h(a_2)) + \eta_1\ \leq \ \frac{\eta_1}{\eta_2}(\eta_3 d_1(a_1,a_2)+\eta_4) + \eta_1 $$

Finally, let  $A_0=\frac{\eta_1 \eta_3}{\eta_2}$ and $A_1=\frac{\eta_1 \eta_4}{\eta_2} + \eta_1$. It follows that every flow line of $\widetilde{\Phi}_t$ is a $(A_0,A_1)$-quasigeodesic.
\end{proof}

\section{Construction of Generalized Franks-Williams Flows}\label{S3}

A common way to construct Anosov flows is to assemble building blocks. In general a building block is a compact three manifold with boundary equipped with a non-singular vector field transverse to the
boundary. In their article \cite{BBY17}, the authors have combined many
known `assembling building blocks' techniques under a broad general criteria. The building blocks of these type of examples are  called \textit{hyperbolic plugs} (defined below). The first example of a non-transitive Anosov flow, i,e the Franks-Williams flow \cite{FW80} is a classical example of this type of construction.

A $plug$ is a pair $(\M,V)$ where $\M$ is a compact three manifold with boundary and $V$ is a non-singular $C^1$-vector field on $\M$ transverse to the boundary of $\M$. The vector field induces a flow, denoted by $\mathcal{V}_t$, on $\M$. If $\M$ has non-empty boundary, the flow is not complete, i,e, every orbit is defined on a closed time interval of $\RR$, but not every flow line is defined on the whole $\RR$. We consider the \textit{maximal invariant set} $\Lambda$ of $\mathcal{V}_t$, defined as $\Lambda:=\bigcap_{t\in\RR}\mathcal{V}_t(\M)$. In other words, $\Lambda$ is the collection of all orbits which are defined for the whole $\RR$, equivalently
these orbits do not intersect $\partial\M$. If $\Lambda$ is a hyperbolic set, we say $(\M,V)$ is a $hyperbolic$ plug. Here is the precise definition:

\begin{definition}
A \textit{hyperbolic plug} $(\M,V)$ is a plug whose maximal invariant set $\Lambda$ is \textit{hyperbolic}, which means, for every $x\in\Lambda$, $T_x\M$ splits into three one-dimensional sub-bundles 
$$T_x\M=E^s(x)\oplus\RR V(x)\oplus E^u(x)$$
The bundle $E^s(x)$ (resp. $E^u(x)$) is called the stable (resp. unstable) bundle and there exists a Riemannian metric such that the differential of the time-one map of the flow uniformly contracts (resp. uniformly expands) the vectors of the stable bundle (resp. unstable bundle). The splitting varies continuously on $x\in\Lambda$ and is invariant under the derivative of the flow $\mathcal{V}_t$. 
\end{definition}

The study of hyperbolic dynamics is an enormous area of study, here we recall few of the preliminary properties (as in \cite{BBY17}) required for this article, we refer to \cite{KH95} for details:
\begin{itemize}
    \item for every $x\in\M$, the \textit{strong stable manifold} $W^{ss}(x)$ is defined as follows :
    $$W^{ss}(x)=\{y\in\M|\ d(\mathcal{V}_t(x),\mathcal{V}_t(y))\rightarrow 0\text{ as }t\rightarrow +\infty)\}$$
    \textit{The strong unstable manifold} is defined as the strong stable of the reversed flow $-\mathcal{V}_t$. 
    
    \item The \textit{weak stable} manifold $W^{s}$( resp. \textit{weak unstable} manifold $W^u$) of a point $x\in\M$ is defined as the union of the strong stable manifolds (resp. strong unstable manifolds) of all points on the orbit of $x$.
    
    \item There exists two 2-laminations, denoted by $W^s(\Lambda)$ and $W^u(\Lambda)$, whose leaves are the weak stable and weak unstable manifolds, respectively, of the points of $\Lambda$. The leaves of the laminations are $C^1$-immersed manifolds tangent to continuous plane fields $E^s\oplus\RR V(x)$ and $E^u\oplus \RR V(x)$. 
\end{itemize}

The boundary of $\M$ is partitioned into two disjoint subsets, namely the \textit{exit boundary} $\partial^{out}$ and the \textit{entrance boundary} $  \partial^{in}$, where $\mathcal{V}_t$ points outwards on $\partial^{out}$ and inwards on $\partial^{in}$. If $\partial^{out}=\emptyset$ then $(\M,V)$ is an $attracting$ plug and similarly $\partial^{in}=\emptyset$ implies a $repelling$ plug.

\begin{itemize}

\item If both $\partial^{in}\neq \emptyset$ and $\partial^{out}\neq \emptyset$ then $\Lambda$ is a `saddle'. In that case, the weak stable lamination $W^{s}(\Lambda)$ intersects $\partial^{in}$ transversally and forms a one dimensional lamination $\LE^{s}_{V}=\partial^{in}\cap W^{s}$ on $\partial^{in}$. Similarly, the weak unstable lamination $W^{wu}(\Lambda)$ intersects $\partial^{out}$ in a one dimensional lamination $\LE^{u}_{V}=\partial^{out}\cap W^{u}$. 

\item For an attracting plug, $\Lambda$ is an $attractor$. In this case the weak-stable lamination $W^{s}(\Lambda)$ intersects $\partial^{in}$ in an one dimensional lamination $\LE^{s}_{V}=\partial^{in}\cap W^{s}$.

\item For a repelling plug, $\Lambda$ is a $repeller$. In this case the weak-unstable lamination $W^{u}(\Lambda)$ intersects $\partial^{out}$ in an one dimensional lamination $\LE^{u}_{V}=\partial^{out}\cap W^{u}$.
\end{itemize}

\begin{proposition}[\cite{BBY17}]{\label{BBY}}
Consider a finite collection of hyperbolic plugs, denoted by
$(\M_1, V_1), (\M_2, V_2),..., (\M_n,V_n)$. 
Assume that each of these plugs is either an attracting or
a repelling plug. 
Let $\mathcal{D}^{out}=\sqcup_{1}^{n}\partial^{out}(\M_i)$ and $\mathcal{D}^{in}=\sqcup_{1}^{n}\partial^{in}(\M_i)$. 
Suppose that the laminations $\LE^s_{\M_i}, \LE^u_{\M_j}$ (if they
are non empty)
are filling laminations in the respective boundary components. 
Suppose there exists a diffeomorphism $\Omega:\mathcal{D}^{out}\rightarrow \mathcal{D}^{in}$ such that $\Phi_*(\LE^{s}_{\M_i})$ is transversal to $\LE^{u}_{\M_j}$ on each appropriate component. Then the quotient vector field $\frac{V_1\sqcup V_2\sqcup...\sqcup V_n}{\Omega}$ is Anosov on the quotient manifold $\frac{\M_1\sqcup\M_2\sqcup...\sqcup \M_n}{\Omega}$.
\end{proposition}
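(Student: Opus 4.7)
The plan is to equip the glued manifold $\M=(\M_1\sqcup\cdots\sqcup\M_n)/\Omega$ with a flow-invariant continuous splitting $T\M=E^s\oplus\RR V\oplus E^u$ that restricts on each maximal invariant set $\Lambda_i$ to the given hyperbolic splitting, and then to verify uniform contraction on $E^s$ and uniform expansion on $E^u$. The attracting-or-repelling dichotomy on each plug is the structural input: it lets one canonically extend the splitting from $\Lambda_i$ to all of $\M_i$ via the flow, after which the transversality and filling hypotheses will ensure that the extensions from adjacent plugs match across each gluing torus.

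To carry out the extension, suppose first that $(\M_i,V_i)$ is an attracting plug. Every forward orbit is trapped and converges to the attractor $\Lambda_i$, so the weak stable lamination $W^s(\Lambda_i)$ is actually a foliation of all of $\M_i$; its tangent plane field defines $E^s\oplus\RR V_i$ at every point of $\M_i$ and, using the strong stable foliation, pins down $E^s$ itself. The unstable line $E^u$, which is intrinsic only on $\Lambda_i$, is transported outward by backward tracing: given $x\in\M_i\setminus\Lambda_i$, flow $x$ backward until it first meets the entrance boundary at some $y\in\partial^{in}\M_i$, and set $E^u(x):=D\Phi_{\tau}(E^u(y))$, where $\tau$ is the transit time from $y$ to $x$ and $E^u(y)$ is obtained by applying $D\Omega$ to the intrinsic value of $E^u$ on the adjacent repelling plug (in which $E^u$ is the tangent line of the $2$-dimensional weak unstable foliation). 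Symmetrically, on a repelling plug $E^u$ is intrinsic and $E^s$ is transported forward through the exit boundary via $D\Omega$. Because every orbit in an attracting (respectively repelling) plug can only exit through its entrance (resp.\ exit) boundary, the tracing procedure is unambiguous and yields continuous distributions on each $\M_i$.

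Continuity and invariance of the assembled splitting across an interface torus $T$ then reduce to a transversality check on $T$ itself. The plane $E^s\oplus\RR V$ inherited from the attracting side meets $T$ along the trace lamination $\LE^s_{\M_i}$, while $E^u\oplus\RR V$ inherited from the repelling side meets $T$ along $\Omega_*(\LE^u_{\M_j})$. The hypothesis that these two $1$-dimensional laminations are transverse on $T$ is exactly what guarantees that $E^s$, $\RR V$, and $E^u$ stay in general position across $T$ and assemble to a continuous global splitting; invariance under $D\Phi_t$ holds on the interior of each plug by construction and at the gluings by the explicit use of $D\Omega$.

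For the Anosov property itself I would apply the cone-field criterion. On each $\Lambda_i$ the intrinsic hyperbolic structure supplies invariant stable and unstable cone fields with definite contraction and expansion constants; propagating these cones by $D\Phi_t$ along the same tracing paths used to extend the splitting produces cone fields on all of $\M_i$, and the transversality of $\LE^s_{\M_i}$ and $\Omega_*(\LE^u_{\M_j})$ is exactly the condition that lets the cones match strictly across each interface. The main obstacle, and the hardest part of the proof, is uniformity of the hyperbolic constants: an orbit can linger in $\M_i\setminus\Lambda_i$ for unbounded time while approaching or leaving $\Lambda_i$, so one cannot just accumulate contraction during a uniformly bounded transit. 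This is precisely where the filling property of the boundary laminations enters: filling forces every orbit to enter any prescribed small neighborhood of $\bigcup_i\Lambda_i$ within uniformly bounded time, and inside that neighborhood the intrinsic hyperbolic constants of the $\Lambda_i$ dominate. A standard compactness argument on orbit segments, combined with the invariant cone dynamics, then yields the required uniform Anosov estimates on $\M$.
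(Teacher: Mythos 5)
The paper does not actually prove this statement: it is imported verbatim as Proposition 1.1 of \cite{BBY17} (see the sentence immediately following the proposition in the text), so there is no in-paper argument to compare yours against. Measured against the proof in \cite{BBY17}, your outline follows the right strategy: extend the splitting from each maximal invariant set over its whole plug (the weak stable plane field of an attractor, resp.\ weak unstable of a repeller, is intrinsic on all of $\M_i$, while the complementary line must be imported through the boundary from the adjacent plug), use transversality of the boundary foliations to obtain a genuine splitting at each interface, and verify hyperbolicity by a cone-field criterion. The hypotheses are invoked in essentially the right places.

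Two points need repair. First, the sentence ``filling forces every orbit to enter any prescribed small neighborhood of $\bigcup_i\Lambda_i$ within uniformly bounded time'' is both misattributed and false as stated: a point very close to a repeller takes arbitrarily long to reach a neighborhood of the attractor it limits to. The correct and needed statement is that the time an orbit spends \emph{outside} the union of the neighborhoods is uniformly bounded, and this follows from compactness (the complement of the neighborhoods is compact and contains no complete orbit), not from the filling hypothesis; in the attracting/repelling setting the boundary laminations are foliations and filling is automatic. Second, the assertion that ``inside that neighborhood the intrinsic hyperbolic constants of the $\Lambda_i$ dominate'' is the real content of the theorem and is not automatic: near a repeller $\Lambda_j$ the candidate stable line is not the intrinsic one but the line transported backward from the gluing torus, and one must show it lies in a uniformly thin cone around $E^s(\Lambda_j)$ before the intrinsic rates can be brought to bear. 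This is exactly where transversality enters quantitatively: it gives a uniform lower bound on the angle between the imported line and the weak unstable plane over a compact fundamental domain of the torus, and backward cone contraction then traps the imported line near $E^s(\Lambda_j)$. The same estimate is what yields continuity of your transported bundles at the $\Lambda_i$ (where the transit time $\tau\to\infty$), which you assert but do not justify. With these two steps made precise, your sketch becomes the argument of \cite{BBY17}.
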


Since the plugs are either attractors or repellers, the 
laminations in the boundary are actually foliations. The result 
above is then Proposition 1.1 of \cite{BBY17}.

 In this article we consider a special type of $attracting$ and $repelling$ plugs, which we call \textit{Fanks-Williams type} hyperbolic plug. This type of of construction was first introduced by  Franks and Williams in \cite{FW80}. We construct our plugs using DA bifurcations of hyperbolic automorphisms on two-torus $\TT$. Details of the construction are described below:

\subsection{Construction of the Franks-Williams Type Hyperbolic Plugs:}{\label{Const}} 
Consider a hyperbolic linear automorphism $A$ on the two-torus $\TT$, which is induced by a linear map $\widetilde{A}:\RR^2\rightarrow \RR$ such that $\widetilde{A}$ has two eigenvalues $\lambda>1$ and $\frac{1}{\lambda}<1$. 
On $\TT$ we have a pair of one dimensional foliations, namely the stable $\LE^s$ and unstable $\LE^u$ foliations of the hyperbolic map $A$ as described below:
\begin{itemize}
    \item \textbf{ Unstable foliation $\LE^u:$} $\RR^2$ has a foliation $\LL^u$ by the lines parallel to the eigenvalue direction $\lambda$ and this foliation is $\widetilde{A}$-invariant. Hence $\LL^u$ on $\RR^2$ projects down to a foliation on $\TT$ and it is the the \textit{unstable foliation} $\LE^u$ of $A$ on $\TT$.
    
    \item \textbf{Stable foliation $\LE^s$:} Similarly, the foliation on $\RR^2$ induced by the lines on the $\frac{1}{\lambda}$-direction projects down to the \textit{stable foliation} $\LE^s$ of $A$ on $\TT$.
\end{itemize}

 These two foliations are everywhere transversal to each other on $\TT$. Hence they define a two-frame $\{\mathscr{X},\mathscr{Y}\}$ on the tangent bundle $T\TT$ where $\mathscr{X}(p)$ is  a vector in $T_p\TT$ tangent to the stable direction and similarly, $\mathscr{Y}(p)$ is a vector tangent to the unstable direction in $T_p\TT$. In fact, we can define a new coordinate system  $\{x,y\}$ on $\RR^2$.
 
\vskip .05in
\noindent
 \textbf{A new coordinate system $\{x,y\}$:} fix a basis $\{v_{1/\lambda},v_{\lambda}\}$ on $\RR^2$ where the basis vectors are eigenvectors of the two distinct eigenvalues $\lambda$ and $1/\lambda$. Then the new coordinate system on $\RR^2$ w.r.t. $\{v_{1/\lambda},v_{\lambda}\}$ is denoted by $\{x,y\}$. In this coordinate, $\widetilde{A}$ can be written as $\widetilde{A}(x,y)=(\frac{1}{\lambda} x,\lambda y)$.  We use this coordinate system extensively in the rest of the article.
 
 \vskip 0.2in
The fixed point $(0,0)$ of $\widetilde{A}$ on $\RR^2$ projects to a fixed point of $A$, denoted by $\mathfrak{o}$, on $\TT$. We can change it to a point source or a point sink using the `Derived from Anosov(DA)' bifurcation on a neighbourhood of $\mathfrak{o}$. 
Here we give a quick description of the technique, a detailed description can be found in \cite[section 17.2]{KH95} or in \cite{Wil70}.

Consider two closed disks  $D_1$ and $D_2$ on $\TT$ centered at $\og$ such that $\og\in D_1 \subset \mathring{D}_2$. On $D_2$ we consider the local coordinate system $\{x,y\}$ around $\og$ projected from the coordinates $\{x,y\}$ on $\RR^2$ around $(0,0)$.  W.r.t that coordinates on $D_2$ the fixed point $\og\in\TT$ is represented by $(0,0)$. Then we `blow-up' the fixed point $\og$ using a smooth map $\phi$ as described below: 

\begin{equation*}
    \begin{split}
    \phi(x,y)&=(\theta(x,y)x,  y) \text{ on }D_2\\
    \phi &= Id\text{ on }\TT\setminus D_2
\end{split}
\end{equation*}

In the above description, $\theta(x,y):\TT\to [1,\infty)$ is a smooth map such that, on $\TT\setminus D_2$ we have $\theta(x,y)=1$ and near the point $\og$, the map
$\theta(x,y)$ is large enough to counteract the contraction along the $x$-lines. Then $\Phi=A \circ \phi$ is a map with a point source at $\og$. 
Notice that the coordinates $(x,y)$ make sense in a neighborhood
of $\og$, but clearly one cannot have global coordinates in $\TT$.
Still the equations above make sense.

\begin{property}[\cite{Wil70,Sma67}]
The new map satisfies the following properties:
\begin{enumerate}
    \item $\Phi=A \circ \phi$ is homotopic to $A$.
    \item The maximal invariant set of $\Phi$ consists of a point source and a one-dimensional hyperbolic attractor, denoted by $\Lambda$.
    \item $A \circ \phi$ preserves the stable foliation $\LE^{s}$ of $A$. More precisely, the attractor of $\Phi$ on $\TT$  is an attracting hyperbolic set, denoted by $\Lambda$.
This induces  a stable foliation in $\TT - \og$,
denoted by $\LE^s(\Lambda)$. The construction is
done so that the leaves of $\LE^s(\Lambda)$ 
are contained in leaves of $\LE^s$.
Only the stable leaf of $\og$ is split into two stable
leaves of $\LE^s(\Lambda)$. All the other leaves are the same.
\end{enumerate}
\end{property}

\begin{remark}
The usual form of blow up is to first apply the hyperbolic
map $A$ and then the blow up $\phi$. 
It is equivalent to what we do here: the inverse $\Phi^{-1}
= \phi^{-1} \circ A^{-1}$ and the contraction under $\phi^{-1}$ in
a neighborhood of 
$\og$ is stronger than the expansion of $A^{-1}$. Hence
$\og$ is an attractor for $\Phi^{-1}$ and there is a 
one dimensional repeller $\Lambda$ for $\Phi^{-1}$.
$\Lambda$ is the attractor for $\Phi$.
We do it in this form, since it is easier to prove later on some
invariance properties of a metric we will be interested in.
\end{remark}

\begin{remark}
We have described above the DA bifurcation to get a point source. Similarly, we can change the fixed point $\mathfrak{o}$ to a sink. In that case the maximal invariant set will consist of a point sink and an one-dimensional repeller and the map $A\circ\phi$ would preserve the foliation $\LE^u$.
\end{remark}
Next consider the suspension manifold 
$$M=\frac{\TT\times \RR}{(q,t)\sim (\Phi(q), t-1)}\text{ for all }q\in\TT\text{ and }t\in\RR$$

The universal cover of $M$, denoted by $\widetilde{M}$, is $\RR^2\times\RR$ equipped with the coordinate system $\{x,y,t\}$ where the $x$-axis and $y$-axis are as described above and $t$-axis is in the vertical direction. The vertical lines induce a natural flow $\psi_t$ on $M$ so that
its lift $\widetilde \psi_t$ to the 
universal cover $\widetilde{M}$ is defined by $\psi_t([q,s])=[q,t+s], q \in \RR^2$. Note that we have a periodic orbit $\C$ of $\psi_t$ homeomorphic to the circle inside $M$ through the fixed point $\og\in \TT$. 

To construct a hyperbolic plug we cut out an open solid torus neighbourhood $N(\C)$ of the periodic orbit $\C$, the new manifold $M_1=M\setminus N(\C)$ is a manifold with boundary and we denote the boundary by $T_1=\partial M_1$, the boundary is homeomorphic to 2-torus. We choose $N(\C)$ in such a way that boundary of $N(\C)$ is a smooth torus embeded in $M$ and the flow lines of $\psi_t$ transversally intersect the boundary of $N(\C)$. 
We will have a further condition on $T_1$ described later.
Finally, we can restrict the flow $\psi_t$ on $M_1$, and the restricted semiflow on $M_1$ will be denoted by $\psi^1_t$.

%But it is not immediate that the translations in the horizontal directions are also isometries, as the homotopy $G_t$ does not reduces to a homotopy on $\TT^2$. First we observe that if the metric $g_{t}$ on the level $\RR^2\times\{t\}$ is translation invariant then it reduces to a metric on $\TT$, say $\bar{g_t}$, and we have a following commutative diagram of local isometries,s from the one-dimensional attractor $\lambda$ of $\Phi$ on $\TT$. The stable lamination of $\Lambda$ intersect $\partial M_1$ in a Reeb foliation \cite{FW}.  

It is clear from the construction that $(M_1,\psi^1_t)$ is an attracting hyperbolic plug as the flow $\phi_t$ is the suspension flow of a `DA' map with a attractor in the maximal invariant set. To ensure that when another plug is attached to $M_1$ along $\partial M_1$, the semiflows are matched smoothly along the boundary, we attach a collar neighbourhood homeomorphic to $T_1 \times [0,1]$ along $\partial M_1=T_1$ such that $\partial M_1$ is glued with $T_1\times \{0\}$. we call the new manifold $\M_1$, and the boundary component of $\M_1$ is denoted by $\T_1=\partial \M_1$. Now propagate $\psi^1_t$ in $T_1\times [0,1]$ via an isotopy such that the extension of the flow on $T_1\times[0,1]$ is a product flow topologically. We denote the extended flow on $\M_1$ by $\Psi^1_t$.
In the next proposition we sum up the description of the above construction: 

\begin{proposition}{\label{plug}}
The pair $(\M_1, \Psi^1_t)$ as constructed above is a hyperbolic plug where $\partial\M_1=\partial^{in}\M_1$ 
 and its maximal invariant set in $\M_1$ is a hyperbolic attractor. In fact,%we can take an arbitrary finite cover of $\M_1$ to make it a 
we can make the blow up operation on finitely many periodic
orbits to obtain
an attracting hyperbolic plug with a finite number of boundary components. The stable foliation of the hyperbolic attractor inside $\M_1$ intersects each component of $\partial \M_1$ in an one-dimensional foliation with two Reeb annuli.

If we consider a repelling `DA' map in the previous construction, we would get a repelling hyperbolic plug where the unstable foliation of the repelling set inside $\M_1$ intersects each component of $\partial \M_1$ in an one foliation with Reeb components.
\end{proposition}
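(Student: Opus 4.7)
The plan is to verify the plug claims in order. First, since $\og$ is a hyperbolic source for $\Phi$ — indeed $D\Phi(\og)=\mathrm{diag}(\theta(\og)/\lambda,\lambda)$ with both eigenvalues exceeding $1$ once $\theta(\og)>\lambda$, which the blow-up is arranged to guarantee — the orbit $\C$ is a repelling periodic orbit of the suspension flow $\psi_t$. I would choose $N(\C)$ as the suspension of a small Lyapunov neighborhood of $\og$ (one contracted by $\Phi^{-1}$), so that $\partial N(\C)$ is a smooth torus transverse to $\psi_t$ with the flow pointing away from $\C$. After excising $N(\C)$ and gluing the collar $T_1\times[0,1]$ carrying a product flow, the boundary $\partial\M_1$ is transverse to $\Psi^1_t$ with the flow pointing inward, so $\partial\M_1=\partial^{in}\M_1$.

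Next I would identify the maximal invariant set. The maximal invariant set of $\psi_t$ on the original suspension $M$ is the suspension of $\{\og\}\cup\Lambda$, namely $\C\cup\Lambda^*$, where $\Lambda^*$ is the suspension of the DA attractor $\Lambda$. Removing $N(\C)$ kills $\C$; the collar cannot trap any complete orbit since its flow is a product; hence the maximal invariant set of $\Psi^1_t$ on $\M_1$ is exactly $\Lambda^*$. The hyperbolic splitting $T\TT|_\Lambda=E^s\oplus E^u$ from the DA construction lifts to a continuous, $D\psi_t$-invariant splitting of $TM|_{\Lambda^*}$ with uniform contraction and expansion, so $(\M_1,\Psi^1_t)$ is an attracting hyperbolic plug with attractor $\Lambda^*$. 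For the generalization to finitely many boundary components I would simply perform the DA bifurcation at finitely many distinct periodic points of $A$ — choosing their linearizing neighborhoods pairwise disjoint and turning all of them into sources for the attracting case (or all into sinks for the repelling case) — excise the corresponding solid tori, and apply the same argument to each boundary component.

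The most delicate step is the Reeb annulus description of the induced foliation on each boundary torus. Near $\og$ the stable foliation $\LE^s$ of $A$ is given in the local $(x,y)$-coordinates by the horizontal lines $\{y=\mathrm{const}\}$, and by the third item of the preserved-foliation property the stable foliation $\LE^s(\Lambda)$ of the attractor agrees with $\LE^s$ off of $\og$, the leaf through $\og$ being split into the half-leaves $\{y=0,\,x>0\}$ and $\{y=0,\,x<0\}$. After suspension, the weak-stable foliation $W^s(\Lambda^*)$ near $\C$ is locally modeled on the flow-invariant planes $\{y=y_0\}$ in the $(x,y,t)$-chart. Choosing $T_1$ as a thin tube $\{x^2+y^2=\epsilon^2\}$ (with the meridional disks varying smoothly in $t$ so as to close up under the identification by $\Phi$), the planes $\{y=y_0\}$ with $|y_0|<\epsilon$ meet $T_1$ transversally in two longitudinal arcs each, while the planes $\{y=\pm\epsilon\}$ are tangent to $T_1$ along a single longitudinal circle. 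These two tangency circles are closed leaves of $\LE^s_V$, they separate $T_1$ into two annuli (the ``front'' $x>0$ and ``back'' $x<0$ of the tube), and within each annulus the interior leaves spiral and accumulate onto both boundary circles — this is the Reeb annulus picture. I expect the hardest part to be verifying carefully that these interior leaves are non-compact spirals rather than closed circles, which uses the irrationality of the slope of $\LE^s$ in $\TT$ together with the fact that $\Phi$ is an expansion near $\og$, so that a single leaf cannot return to the tube with the same $y$-value in finite time. The repelling case is entirely symmetric under time reversal, producing the analogous foliation $\LE^u_V$ with two Reeb annuli on each boundary component.
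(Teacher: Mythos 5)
Your first two paragraphs are sound and match the standard justification the paper relies on (the paper itself presents this proposition as a summary of the construction, citing Franks--Williams and Williams rather than proving it): the eigenvalue computation $D\Phi(\og)=\mathrm{diag}(\theta(\og)/\lambda,\lambda)$, the identification of the maximal invariant set with the suspended DA attractor, and the extension to finitely many blown-up orbits are all correct.

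The third paragraph, however, contains a genuine error, and it is precisely the part of the proposition that the rest of the paper leans on. You take $T_1$ to be the vertical cylinder $\{x^2+y^2=\epsilon^2\}$ in the $(x,y,t)$-chart. But in the suspension the flow is the $t$-translation, so that cylinder is a union of flow lines: it is everywhere \emph{tangent} to the flow, whereas the construction requires $\partial N(\C)$ to be transverse to $\psi_t$. A correct transverse torus is locally a graph $t=f(x,y)$ over a punctured neighbourhood of the origin, with $f\circ\widetilde{\Phi}=f-1$. Once $T_1$ is transverse to the flow, every weak-stable leaf (which contains the flow direction) meets $T_1$ transversally, so the induced foliation $\F$ has \emph{no} tangency circles at all; your claimed closed leaves at $\{y=\pm\epsilon\}$ do not exist (and the vertical lines $\{x=0,\,y=\pm\epsilon\}$ would not even close up under $(q,t)\sim(\Phi(q),t-1)$, since $\Phi(0,\epsilon)=(0,\lambda\epsilon)$). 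The actual closed leaves are the traces of the weak-stable manifolds of the two saddle periodic orbits $C_{p_1},C_{p_2}$ created by the bifurcation: these lie over the two separatrices $\{y=0,\,x>0\}$ and $\{y=0,\,x<0\}$ of the split leaf, which are the only $\Phi$-invariant leaves of $\LE^s(\Lambda)$ near $\og$ and hence the only ones descending to circles. Consequently the two Reeb annuli are the regions $\{y>0\}$ and $\{y<0\}$ of $T_1$, not the ``front'' $\{x>0\}$ and ``back'' $\{x<0\}$ as you assert --- your picture is rotated by ninety degrees. This is not a cosmetic point: the paper's Proposition 5.5 and Claim 5.10 use exactly the facts that $\C_1,\C_2$ sit on opposite sides of the $yt$-plane $\{x=0\}$ and that every other leaf of $\F$ crosses that plane once, which fails under your identification. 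Finally, the spiralling of the interior leaves is driven by the contraction $y_0\mapsto y_0/\lambda$ of the return map near $\og$, not by irrationality of the slope of $\LE^s$ in $\TT$; non-compactness of the interior leaves follows simply because $\{y=y_0\}$ is not $\Phi$-invariant for $y_0\neq 0$.
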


\begin{definition}
Any attracting or repelling plug with finite number of boundary components as constructed in proposition \ref{plug} is called a \textit{Franks-Williams type} hyperbolic plug in this article.  
\end{definition}

\begin{remark}
The explicit description of the blow up is done for the
orbit which is the suspension of the point $(0,0)$. 
Later computations and results will be done relative to this
orbit. Given a different periodic orbit, one can chose a different
coordinate system so that this different orbit is the one through
the suspension of $(0,0)$. Hence the arguments in this
article work for any collection of blow ups as described above.
%Instead of taking a finite cover after applying the DA bifurcation, we can first take a finite lift of $\TT$ with the hyperbolic map $A$ and then we can apply different attracting DA bifurcations at different fixed points. For example, the map $\theta(x,y)$, related to the blow-up, can be different for each fixed points. That process also give us an attracting \textit{Franks-Williams type} hyperbolic plug with a finite number of boundary components. All of the arguments in the rest of the paper intuitively work for this second type of construction, but detailed description of the second type of examples complicates the notations. Hence we stick to the detailed description of the first type only. 
 \end{remark}

\subsection{Construction of the example manifolds and the flows on them}{\label{manifold and flow}}

We consider a finite collection of Franks-Williams type plugs, say 
$$\{(\M_1,\Psi^1_t);(\M_2,\Psi^2_t);...;(\M_n,\Psi_t^n)\}$$

\noindent
along with a diffeomorphism $\Omega$ from the collection of exit boundaries $\mathcal{D}^{out}=\sqcup_1^n\partial^{out}_i$ to the collection of entrance boundaries $\mathcal{D}^{in}=\sqcup_1^n\partial^{in}_i$. In this notation, for any plug $(\M_i,\Psi^i_t)$, either $\partial^{out}_i$ or $\partial^{in}_i$ is empty, and the other one is non-empty. If $\partial^{in}_i$ is nonempty, then it may have more than one component, each homeomorphic to a two-torus, and the weak-stable foliation $\LE^{ws}(\Lambda_i)$ of the semiflow $\Psi_t^i$ intersects each component of $\partial^{in}_i$ in a union of two
 Reeb annuli. Similarly if $\partial^{out}_i\neq \emptyset$, each component of $\partial^{out}_i$ intersects the weak-unstable foliation $\LE^{wu}(\Lambda_i)$ in a one dimensional foliation with two Reeb annuli. 

Using the diffeomorphism $\Omega$, we can construct the manifold

$$\N=\frac{\M_1\sqcup \M_2\sqcup...\sqcup \M_n}{\Omega(q)\sim q}$$

\noindent
and the semiflows $\{\Psi^1_t,\Psi^2_t,...,\Psi^n_t\}$ match to produce
a flow $\Psi_t$ on $\M$.
If we consider a diffemorphism $\Omega: \mathcal{D}^{out}\rightarrow \mathcal{D}^{in}$ that transversally maps the one foliations on each component of $\partial^{out}_i$ to the one foliations on the respective components of $\mathcal{D}^{in}$, then by the proposition \ref{BBY}, the flow $\Psi_t$ on $\N$ is \textit{Anosov}.

%One way to construct an $\Omega$ with the above property can be intuitively described as defining a non-trivial rotation on each component of $\mathcal{D}^{out}$.

\begin{definition}
An Anosov flow constructed in the way described above from Franks-Williams type plugs will be called \textit{Generalized Franks-Williams (GFW)} flow in this article.  
\end{definition}

The original construction by Franks and Williams was to construct a hyperbolic plug $(\M_1,\Psi^1_t)$ from an DA map and with one exit boundary component, and attaching it with $(\M_1,-\Psi^1_t)$ (i,e the same manifold equipped with the reversed flow), along the boundaries with a $\pi/2$-rotation.

\section{Riemannian Metric on the Plugs and the whole manifold}\label{S4}

To analyze quasigeodesic behaviour of the flow lines, we will first define a suitable Riemannian metric on our manifold. As $\N$ is a compact manifold, for any two Riemannian metrics $\tilde{g}_1$ and $\tilde{g}_2$ on the universal cover $\NN$ (which project down on $\N$), the identity map on $\NN$ is a quasi-isometry w.r.t. the induced path metrics. As quasigeodesic
behavior 
is a property that is invariant under
quasi-isometries, it is enough to work with one fixed metric.  

We construct a Riemannian metric $\G_i$ on the hyperbolic plugs $\M_i$ for each $i$, and then attach them along the boundary components of the $\M_i$'s using the map $\Omega$ to construct the metric $\G$ on the whole manifold $\M$. We describe the details of the construction of the Riemannian metric
$\G_1$ on $\M_1$ and the same process works for all other plugs $\M_i$'s.

\subsection{Construction of a Riemannian metric $\mathcal{G}_1$ on $\M_1$}{\label{metric}}
Consider the attracting DA map $\Phi:\TT\rightarrow \TT$ and the manifold $M=\TT\times [0,1]/\sim$ as described in the previous section. The universal cover of $M$, $\widetilde{M}=\RR^2\times \RR$
is equipped with the coordinate system $\{x,y,t\}$. The lift of the flow $\psi_t$ to the universal cover $\MM=\RR^2\times\RR$ will
be henceforth denoted by $\widehat{\psi}_t$ (we explain the $\widehat{ }$  notation later). We can also define a three-frame $\{\mathscr{X},\mathscr{Y},\mathscr{T}\}$ on the tangent bundle
$T(\RR^2\times\RR)$ where the vector fields $\mathscr{X}$, $\mathscr{Y}$ and $\mathscr{T}$ are parallel to the $x$-direction, $y$-direction and $t$-direction respectively. In addition, we define the vector field $\mathscr{T}$ as $\mathscr{T}=\frac{d}{dt}\widehat{\psi}_t$.

\begin{remark}
 The $y$-direction on $\RR^2$ were parallel to the strong unstable foliation of $\widetilde{A}$ on $\RR^2$, but the blow-up does not preserve the unstable direction. Hence the $y$-direction does not represent the unstable leaves of the attractor of the $DA$-map $\widetilde{A}\circ \tilde{\phi}$, though the $x$-direction is parallel to the stable leaves of the attractor. 
 \end{remark}
We first construct a metric on $M$ and then restrict it to $M_1=M\setminus N(\C)$. Our convention is that $M_1$ is the complement of the
interior of $N(\C)$ in $M$, so $M_1$ is compact, and with boundary.
We will use an intermediate cover of $M_1$ which will be denoted by
$\OM_1$. 
Consider the repelling periodic orbit $\C$ of $\psi_t$ in $M$ and $N(\C)$ a solid torus neighbourhood of $\C$ as described in subsection 2.1. Let $\widehat{N(\C)}$ 
be the collection of lifts of $N(\C)$ in $\mt = \RR^2\times \RR$.
Then define $\OM_1=(\RR^2\times \RR)\setminus \widehat{N(\C)}$,
again the convention is that we are removing the interior of the sets.
In other words $\OM_1$ is the pullback of $M_1$ under the
cover $\widetilde M \to M$.
This is an infinite cover of $M_1=M\setminus N(\C)$, but it is not the universal cover of $M_1$.
Later in this article, 
we do a lot of the analysis in $M_1$ and $\OM_1$ instead of
$\mt$. For this reason we will denote the metrics using the
hat notation.

If $\OG_1$ is a Riemannian metric on $\mt$ which projects down to $M$, then the deck transformations on $\mt=\RR^2\times\RR$ have to be isometries w.r.t $\OG_1$. The deck transformation group on $\mt$ is generated by the following diffeomorphisms:

\begin{enumerate}
    \item $\Gamma: \RR^2\times\RR \rightarrow \RR^2\times \RR$, $$\Gamma(x,y,t)=(\widetilde{\Phi}(x,y), t-1)$$
    where $\widetilde{\Phi}(x,y)$ is the lift of the map $\Phi=A\circ\phi$ from $\TT$ to $\RR^2$ so that $\widetilde{\Phi}(0,0) = (0,0)$. 
    %More precisely, $\widetilde{A}(x,y)=(\lambda^{-1}x,\lambda y)$ and $\tilde{\phi}(x,y)=(\tilde{\theta}(x,y)x,y)$, Hence
    %$$\widetilde{\Phi}=\tilde{\phi}\circ \widetilde{A}(x,y)=\tilde{\phi}(\lambda^{-1}x,\lambda y)=(\tilde{\theta}(\lambda^{-1}x,\lambda y)\lambda^{-1}x,\lambda y)$$
    
    \item Translations by one unit in two horizontal directions w.r.t the Euclidean coordinate system on $\RR^2\times\RR$, i,e
    \begin{enumerate}
        \item $E_1(e_1, e_2,t)=(e_1+1, e_2,t)$
        \item $E_2(e_1, e_2,t)=(e_1, e_2+1,t)$
    \end{enumerate}
\noindent
    where $e_1$ and $e_2$ are given w.r.t the Euclidean co-ordinates on $\RR^2\times\RR$.
    \end{enumerate}
    
    It is enough to construct the metric $\OG_1$ on $\RR^2\times[0,1]$ such that the maps
    $$\Gamma:\RR^2\times\{1\}\rightarrow \RR^2\times \{0\}, \  \ \Gamma(x,y,1)=(\widetilde{\Phi}(x,y),0)=(\tilde{A}\circ \tilde{\phi}(x,y),0)$$ 
    and the translations $E_1$ and $E_2$ restricted on $\RR^2\times [0,1]$ are isometries. 
Notice that the first map is between two dimensional sets, and the
other two are between three dimensional sets.

As before, we consider the coordinate system $\{x,y,t\}$ on $\RR^2\times [0,1]$. The idea to define the metric on $\RR^2\times[0,1]$ is as follows: We
will pick a suitable  metric $g_0$ on the level {t=0} and  consider a family $h_s$ 
of maps, smoothly varying with $s\in[0,1]$ where $h_0=Id$ and $h_1=\widetilde{\Phi}$. Then we will pull-back the metric $g_0$ from the level $\{t=0\}$ to the level $\{t=s\}$ via the map $h_s$.  % Note that both of the maps 
%\marginpar{S: Shouldn't next be $\widetilde{\Phi}$?}
%$\tilde{\phi}(x,y)=(\tilde{\theta}(x,y)x,y)$
%and $\tilde{A}(x,y)=(\lambda^{-1}x, \lambda y)$ are homotopic to the identity on $\RR^2$, we fix two such two homotopies
%$$\widetilde{\phi}_s(x,y)=(\tilde{\theta}(x,y)^sx,y)\text{ and }\widetilde{A}_s=(\lambda^{-s}x,\lambda^s y),\text{ for }s\in[0,1]$$
%Then the composition $H_s=\tilde{\phi}_s\circ \tilde{A}_s$ is a homotopy from $\Phi=\tilde{\phi}\circ \tilde{A}$ to the identity map on $\RR^2$ as follows:

First we define a family of maps on $\RR^2$. Consider the neighbourhood $D_2$ with the local co-ordinate system $\{x,y\}$ as defined in the description of the DA map in the previous section. 
Let $\wth$ be the lift to $\RR^2$ of $\theta: \TT \to \RR$.
Now define the family of maps $\tilde{\eta}_s:\RR^2\rightarrow \RR^2$ for $s\in[0,1]$ as follows: 
First define $$B_s(x,y) \ = \ (\lambda^{-s} x, \lambda^s y)$$
\noindent
Notice that $B_1 = \widetilde{A}$.
Also define

\begin{equation*}
    \begin{split}
    \nu_s(x,y) &= \ ((\theta(x,y))^s x,  y) \text{ on }D_2\\
    &= \ (x,y) \text{ on }\TT\setminus D_2.
\end{split}
\end{equation*}
\noindent 
Again notice that $\nu_1 = \phi$.
Let $\tilde{\nu}_s: \RR^2 \to \RR^2$ be the lift of $\nu_s$ so
that $\tilde{\nu}_s(0,0) = (0,0)$. Then define

$$\tilde{\eta}_s(x,y) \ = \ B_s \circ \tilde{\nu}_s (x,y)$$
%\begin{equation*}
%    \begin{split}
%    \tilde{\eta}_s(x,y)\ = \ & \tilde{\nu}_s \circ B_s (x,y)\\
%    &\text{where }\ B_s(x,y) \ =  \ (\lambda^{-s} x, \lambda^s y)\\
%    &\nu_s(x,y) \ = \ ( (\wth(x,y))^s x, y)
%\nu_s(x,y) \  (\theta(\lambda^{-s}x,\lambda^s y)^s\lambda^{-s}x, \lambda^s y) \text{ on }D_2\\
%    &= Id\text{ on }\TT\setminus D_2
%\end{split}
%\end{equation*}

Now we are ready to define the family of maps $h_s:\RR^2\times\{s\}\rightarrow \RR^2\times\{0\}$:
$$h_s(x,y,s)=(\tilde{\eta}_s(x,y),0)\text{ for }s\in[0,1] $$
%where $\tilde{\eta}_s:\RR^2\rightarrow\RR^2$ is the lift of the map $\eta_s:\TT\rightarrow \TT$ as defined above, so that
%$\tilde{\eta}(0,0) = (0,0)$. 
\noindent
In general, it is not easy to find the exact formula of the lifts $\tilde{\eta}_s$, except near the point $(0,0)$,
where it is easy to get an explicit formula. 

For each $s$, the map $h_s$ takes the level $\RR^2 \times \{ s \}$
to $\RR^2 \times \{ 0 \}$.
We start with the metric $g_0^2=dx^2+dy^2$ on the level $\RR^2\times \{0\}$ where $dx$( resp. $dy$) measures the length along $x$-directions (resp. $y$-directions). 

Using the family $h_s, 0 \leq s \leq 1$, we can pull back the metric $g_0^2=dx^2+dy^2$ from $\RR^2\times\{0\}$ to each level $\RR^2 \times \{ s \}$.
Hence on $\RR^2\times\{t\}$, we define the pull back metric $g_t=(h_t)^*(g_0)$ where $t\in[0,1]$. 

With the induced differentiable structure in $\RR\times[0,1]$ from inclusion
in $\RR^2\times\RR$ it is easy to see that the 
the metrics on $\RR\times\{t\}$ vary smoothly with $t\in[0,1]$ because of the smoothness of family $h_s, 0 \leq s \leq 1$. 
We now define a metric not only on horizontal vectors, but on all
vectors. At the point $q=(q_1,t)\in\RR^2\times[0,1]$ the metric 
is defined as follows:
\begin{equation}
    \begin{split}
%&\OG_1=g_0\text{ on }\RR^2\times\{0\}\text{ where }g_0^2=dx^2+dy^2;\\
&\OG_1(\mathscr{X},\mathscr{Y}) = g_t(\mathscr{X},\mathscr{Y})
\text{ on the level }\RR^2\times\{t\}\text{ for }t\in[0,1]; \\
& \OG_{1}(\mathscr{T}, a\mathscr{X}+b\mathscr{Y})= 0\text{ for all }a,b\in\RR;\\
& \OG_1(\mathscr{T},\mathscr{T})= 1;
   \end{split}
\end{equation}

%As we have a metric at each level $\RR^2\times\{t\}$ for $t\in[0,1]$, we can now extend it at each level $\RR^2\times \{t\}$ for any arbitrary $t\in\RR$. For any fixed $t'\in\RR$, consider $n\in\mathbb{Z}$ sufficiently large  such that $t'-n$ falls in $[0,1]$. Then $\Gamma^{n}$ maps the level $\RR^2\times\{t'\}$ to $\RR^2\times \{t'-n\}$ and we can  define the pullback metric $g_{t'}=(\Gamma^n)_*(g_{t'-n})$ on $\RR^2\times \{t'\}$. 

%So far we have defined the Riemannian metric for the tangent vectors generated by the vector fields $X$ and $Y$ at each level $\RR^2\times\{t\}$. We still need to include the $T$-direction, but all three generators of the isometry group does not affect the tangent vectors in the $T$-directions. So we can extend the metric to the $T$-directions by defining it orthogonal to $X$ and $Y$-directions. More precisely, we define $\widetilde{\mathcal{G}}$ at a point $q=(x,y,t)\in \RR^2\times\RR$ as follows:

\begin{observation}\label{iso}
Then metric $\OG_1$ on $\RR^2\times[0,1]$ as defined above in \ref{metric} is invariant under the the maps $\Gamma$, $E_1$ and $E_2$ as follows:
\begin{enumerate}
    \item Clearly the map $\Gamma:\RR^2\times\{1\}\rightarrow \RR^2\times \{0\}$ defined by $\Gamma(x,y,1)=(\tilde{A}\circ \tilde{\phi}(x,y),0)$ is an isometry as $$\OG_1|_{\RR^2\times\{1\}}=(\tilde{A}\circ \tilde{\phi})^*(\OG_1|_{\RR^2\times\{0\}})$$

   \item 
%Define $\tilde A_t(x,y) := (\lambda^{-t} x, \lambda^t y)$, for
%$0 \leq t \leq 1$.
%Also define $\phi_t(x,y)(x,y) = ((\theta(x,y))^t x,  y)$, 
%again for $0 \leq t \leq 1$.
On the level $\RR^2\times \{0\}$ the metric $g_0^2=dx^2+dy^2$ is invariant under the translations. On each level $\RR^2\times \{t\}$, the metric $(h_t)^*(g_0)$ can be written as 
$$(B_t \circ \tilde{\nu_t})^*(g_0) \ = \ 
(\tilde{\nu}_t)^* ((B_t)^* (g_0))$$
\noindent
Notice first that $B_t$ does not leave invariant
the integer lattice. However $(B_t)^*$ leaves invariant
the metric $-$ it is the solv metric in this setting.
It is clear that  $\tilde{\nu_t}$ leaves invariant
the metric
under integer translations, since it came from a map in  $\TT$.
The $B^*_t$ invariance of $g_0$ was
the reason for choosing $\Phi = A \circ \phi$,
rather than $\phi \circ A$.
%it is easy to check that for any $t$ both of $\tilde{A}_t$ and 
%$\tilde{\nu}_t$ commutes with the translations $E_1$ and $E_2$. 

It follows that the translations $E_1$ and $E_2$ are isometries w.r.t. the pull-back metrices on each level $\RR^2\times \{t\}$. 
\end{enumerate}

\end{observation}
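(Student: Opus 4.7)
The plan is to verify the invariance of $\OG_1$ under each of the three generators $\Gamma$, $E_1$, $E_2$ separately. Since by definition $\mathscr{T}$ has unit length everywhere and is orthogonal to the horizontal distribution, and since all three generators either permute horizontal slices among themselves or (in the case of $\Gamma$) map the slice $\RR^2\times\{1\}$ to the slice $\RR^2\times\{0\}$ while fixing the vertical direction, the invariance question reduces entirely to checking isometry at the level of the horizontal slice metric $g_t = h_t^{*}g_0$, with $g_0 = dx^2 + dy^2$.

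For $\Gamma$, the key step is to observe that at the top level $t=1$ one has $h_1 = \tilde{\eta}_1 = B_1\circ\tilde{\nu}_1$, and by construction $B_1 = \tilde{A}$ and $\tilde{\nu}_1 = \tilde{\phi}$, so $h_1 = \tilde{A}\circ\tilde{\phi} = \tilde{\Phi}$. Consequently $g_1 = \tilde{\Phi}^{*} g_0$, which is precisely the statement that $\Gamma:(x,y,1)\mapsto(\tilde{\Phi}(x,y),0)$ is an isometry from $(\RR^2\times\{1\},g_1)$ to $(\RR^2\times\{0\},g_0)$. For the translations $E_1,E_2$, I would factor the pullback as
\[
g_t \ = \ (B_t\circ\tilde{\nu}_t)^{*}g_0 \ = \ \tilde{\nu}_t^{*}\bigl(B_t^{*}g_0\bigr).
\]
A direct computation with $B_t(x,y) = (\lambda^{-t}x,\lambda^{t}y)$ gives $B_t^{*}g_0 = \lambda^{-2t}dx^2 + \lambda^{2t}dy^2$, a constant-coefficient metric on $\RR^2$, hence invariant under \emph{every} translation of $\RR^2$; in particular under $E_1$ and $E_2$. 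Moreover $\tilde{\nu}_t$ is the lift of the self-diffeomorphism $\nu_t:\TT\to\TT$ normalized so $\tilde{\nu}_t(0,0)=(0,0)$, forcing equivariance $\tilde{\nu}_t\circ E_j = E_j\circ\tilde{\nu}_t$ for $j=1,2$. Combining these yields $E_j^{*}g_t = E_j^{*}\tilde{\nu}_t^{*}(B_t^{*}g_0) = \tilde{\nu}_t^{*}E_j^{*}(B_t^{*}g_0) = \tilde{\nu}_t^{*}(B_t^{*}g_0) = g_t$, as desired.

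The main conceptual obstacle, and the one step I would emphasize, is why the ordering $\Phi = A\circ\phi$ (rather than $\phi\circ A$) is essential. The factorization above works precisely because the constant-coefficient, translation-invariant piece $B_t^{*}g_0$ is produced at the innermost stage of the pullback chain, after which it is propagated by the lattice-equivariant map $\tilde{\nu}_t$. With the reverse ordering $\phi\circ A$, the corresponding factorization would instead read $\tilde{A}^{*}(\tilde{\phi}^{*}g_0)$; but $\tilde{\phi}^{*}g_0$ has spatially varying coefficients near the blown-up orbit and is not translation-invariant, so no subsequent pullback by the linear map $\tilde{A}$ can restore invariance under $E_1,E_2$. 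This is the single subtle point I would flag, but once it is isolated, the rest of the verification is essentially bookkeeping.
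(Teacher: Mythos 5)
Your proof is correct and follows essentially the same route as the paper: it reduces the question to the slice metrics $g_t=h_t^{*}g_0$, treats $\Gamma$ via $h_1=\tilde{A}\circ\tilde{\phi}$, and handles $E_1,E_2$ through the factorization $g_t=\tilde{\nu}_t^{*}(B_t^{*}g_0)$ together with the translation-invariance of the constant-coefficient (solv) metric $B_t^{*}g_0$ and the lattice-equivariance of $\tilde{\nu}_t$. Your explicit pullback computation and the explanation of why the ordering $\Phi=A\circ\phi$ matters simply spell out in more detail what the paper asserts.
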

 
 \begin{definition}
The description of the metric $\OG_1$ on $\RR^2\times[0,1]$ as in \ref{metric} together with Observation \ref{iso} defines a metric on $M$.
We denote this metric on $M$ by $G_1$.
\end{definition}

 Restrict the metric $G_1$ on $M_1=M\setminus N(\C)$.

Before describing the metric on the whole manifold $\N$, we prove the following
lemma, whose proof is simple but crucial result derived from the above defined Riemannian metric on $(\M_1,\mathcal{G}_1)$

\begin{remark}
We start with the flow $\psi_1$ in $M_1$ or $M$.
In the following proof and in other situations in this article
we analyze flow lines in the lift $\OM_1$. This is contained
in the universal cover $\widetilde{M}$. Some arguments are done
in $\widetilde{M}$. For simplicity of notation we denote
the lift of $\psi_t$ to $\OM_1$ and the lift of $\psi_t$ to
$\widetilde{M}$ by the same notation $\widehat{\psi}_t$.
\end{remark}

\begin{lemma}\label{qgo} 
If $\gamma$ is a flow line or flow ray   of $\psi_t$ contained  $M_1$  then the lift of $\gamma$ in the universal cover $\widetilde{M}_1$ is a globally length minimizing geodesic.

\end{lemma}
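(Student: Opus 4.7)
The plan exploits the product-like structure of $\mathcal{G}_1$: the flow direction $\mathscr{T}$ is $\mathcal{G}_1$-orthogonal to the horizontal foliation $\{\mathbb{R}^2 \times \{t\}\}$ and has unit norm. This alone forces the vertical distance to bound below the length of any curve joining two points with the same horizontal coordinates, and the flow lines of $\widehat{\psi}_t$ in $\widetilde{M}$ are exactly these unit-speed vertical segments.

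I would first prove the stronger statement directly in $\widetilde{M} = \mathbb{R}^2 \times \mathbb{R}$. Take two points $P_1 = (x_0, y_0, a)$ and $P_2 = (x_0, y_0, b)$ on a common lifted flow line, and let $\zeta(s) = (x(s), y(s), t(s))$ be any piecewise smooth curve from $P_1$ to $P_2$ in $\widetilde{M}$. Writing $\zeta'(s) = x'(s)\mathscr{X} + y'(s)\mathscr{Y} + t'(s)\mathscr{T}$ and using that $\mathscr{T}$ is $\mathcal{G}_1$-orthogonal to $\mathscr{X}, \mathscr{Y}$ with $\mathcal{G}_1(\mathscr{T},\mathscr{T}) = 1$ gives
\begin{equation*}
|\zeta'(s)|_{\mathcal{G}_1}^2 \;=\; g_{t(s)}\bigl(x'\mathscr{X}+y'\mathscr{Y},\, x'\mathscr{X}+y'\mathscr{Y}\bigr) + (t'(s))^2 \;\geq\; (t'(s))^2,
\end{equation*}
so $\mathrm{length}(\zeta) \geq \int |t'(s)|\, ds \geq \bigl|\int t'(s)\, ds\bigr| = |b-a|$. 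The vertical flow segment has length exactly $|b-a|$, so it realizes this lower bound and, being parameterized by arc length since $|\mathscr{T}|=1$, is a geodesic.

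The remaining task is to pass to the universal cover $\widetilde{M}_1$ of $M_1$. Restricting the metric from $\widetilde{M}$ to $\widehat{M}_1 \subset \widetilde{M}$, any curve in $\widehat{M}_1$ joining two points of a vertical flow segment is still a curve in $\widetilde{M}$ of the same length, so the inequality above already yields length-minimality inside $\widehat{M}_1$. Since $\widehat{M}_1$ is a (connected) cover of $M_1$, the universal covering $\widetilde{M}_1 \to M_1$ factors through it, yielding a local isometry $q: \widetilde{M}_1 \to \widehat{M}_1$. Given two points $\tilde{p}_1, \tilde{p}_2$ on a lift $\tilde{\gamma}$ of $\gamma$ and any curve $\tilde{\zeta}$ between them in $\widetilde{M}_1$, the projection $q\circ\tilde{\zeta}$ has the same length and joins two points of the flow line $q(\tilde{\gamma})$; by the previous step its length is at least that of the connecting flow segment, which equals the length of $\tilde{\gamma}$ between $\tilde{p}_1, \tilde{p}_2$. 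Hence $\tilde{\gamma}$ is globally length-minimizing.

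No serious obstacle arises: the statement is essentially a one-line consequence of the orthogonal product-like structure of $\mathcal{G}_1$ together with the fact that $\mathscr{T}$ is a unit vector field. The only care required is bookkeeping of the three spaces $M_1 \subset M$, the intermediate cover $\widehat{M}_1 \subset \widetilde{M}$, and the universal cover $\widetilde{M}_1$ of $M_1$, using that the covering projections are local isometries so that length inequalities transfer.
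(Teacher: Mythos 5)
Your proposal is correct and follows essentially the same route as the paper: both prove that the vertical flow lines in $\widetilde{M}=\RR^2\times\RR$ are globally length minimizing by projecting the velocity of an arbitrary competing curve onto the unit, horizontally-orthogonal field $\mathscr{T}$, then restrict to $\OM_1$ and transfer the statement to the universal cover $\widetilde{M}_1$ by pushing curves down through the covering. Your write-up is, if anything, slightly more explicit than the paper's about the factorization $\widetilde{M}_1\to\OM_1\to M_1$ and the length-preservation under the local isometry.
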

\begin{proof}
Consider the universal cover $\widetilde{M}=\RR^2\times\RR$ 
with the metric $\OG_1$
and the subset $\OM_1$ contained in it.
It is enough to prove that the flow lines or flow rays in $\OM_1$
%\subset \MM$ 
are length minimizing in $\OM_1$ w.r.t. $\OG_1$.
This is because any rectifiable curve $\gamma$ in $\widetilde{M}_1$ 
joining
two points in a flow line or flow ray in $\widetilde{M}_1$ projects
to a curve in $\OM_1$ joining two points in a flow line or flow ray.

What we prove is that the flow lines in $\widetilde{M}$ are
globally length minimizing. This implies that that
the flow lines or flow rays contained
in  $\OM_1$ are length minimizing in $\OM_1$.
Notice also that we denote the flows in $\widetilde{M}$ or $\OM_1$
by $\widehat{\psi}_t$, see previous remark.

We note that the flow lines of the suspension flow $\widehat{\psi}_t$ in 
$\widetilde{M}=\RR^2\times\RR$ are the vertical lines $\{*\}\times\RR$ in $\RR^2\times\RR$.
These lines are the integral curves of the vector field 
$\mathscr{T}=\frac{d}{dt}\widehat{\psi}_t$ in $\RR^2\times\RR$. As the vectors in the $t$-directions are orthogonal to the vectors in the span of $\{\mathscr{X},\mathscr{Y}\}$ (i,e. in the horizontal levels $\RR^2\times \{t\}$), the integral curves of the vector field $\mathscr{T}=\frac{d}{dt}\widehat{\psi}_t$ are globally length minimizing. More precisely, consider a vertical line $\{a\}\times\RR$ and take two points $p_1$ and $p_2$ on it. Suppose $\sigma$ is a curve connecting $p_1$ and $p_2$ then

\begin{equation*} 
\begin{split}
\text{length}(\sigma) & =\int_{\text{domain}(\sigma)}||\frac{d}{dt}\gamma(t)||dt \\
 & =\int_{\text{domain}(\sigma)}(||\frac{d}{dt}\gamma(t)|_{\mathscr{T}(\gamma(t))}||^2+||\frac{d}{dt}\gamma(t)|_{\mathscr{X},\mathscr{Y}(\gamma(t))}||^2)^{1/2}dt\\
 & \geq \int_{\text{domain}(\sigma)}||\frac{d}{dt}\gamma(t)|_{\mathscr{T}(\gamma(t))}||dt
\end{split}
\end{equation*}
Where $\frac{d}{dt}\gamma(t)|_{\mathscr{T}(\gamma(t))}$ 
and $\frac{d}{dt}\gamma(t)|_{\mathscr{X},\mathscr{Y}(\gamma(t))}$ 
denote the components of 
$\gamma'(t)=\frac{d}{dt}\gamma(t)$ along $t$-direction 
and in the span of $\{\mathscr{X},\mathscr{Y}\}$ respectively. 
Hence it is clear that the integral curves of the vector field 
$\mathscr{T}=\frac{d}{dt}\widehat{\psi}_t$, i,e the flow lines 
of $\widehat{\psi}_t$ in $\RR^2\times\RR$, are globally 
length minimizing geodesic in 
$\widehat{M}=\RR^2\times\RR$ w.r.t. the metric $\OG_1$. 

As the flow lines of $\widehat{\psi}_t$ in $\MM$ are length minimizing geodesic, so is the flow lines or flow rays of the restricted flow $\widehat{\psi}_t|_{\OM_1}$ on $\OM_1$ inside $\MM$. 

Note that $\widehat{M}_1$
 is not the universal cover of $M_1$, but an infinite subcover of $M_1$. As the flow rays are globally length minimizing geodesic in a subcover, same has to be true in the universal cover $\widetilde{M}_1$. 
This completes the proof. 
\end{proof}
We end this subsection with a crucial remark which will be used later. 
\begin{remark}\label{solv}

     The construction of our metric $G_1$ is motivated by the $Solv$ metric $dS^2=\lambda^{-2s}dx^2+\lambda^{2s}dy^2+ds^2$ on $\RR^2\times\RR$. We observe that there is an another way to see the metric $\OG_1$ on $M_1$, we can find a map $\mathcal{H}$ such that the family of map $h_s$ can be written as $h_s=\overline{B}_s\circ \mathcal{H}$ where $\overline{B}_s(x,y,s)=(\lambda^{-s}x,\lambda^s y,0)$. To see that such an $\mathcal{H}$ exits it is enough to determine the map near the $t$-axis and it is easy to check that the map $\mathcal{H}(x,y,s)=((\tilde{\theta}(x,y))^s x,y,s)$ near $\{(0,0,t)|t\in\RR\}$ serves the purpose. 
We can extend it to all of $\widetilde{M}$ using the 
definition of $\theta$ in $\TT$. 
Hence the pull-back metric on the level $\RR^2\times\{s\}$ is same with $\mathcal{H}^*( \overline{B}_s^*(g_0))$.

If we consider the family of pull-back metrices $\overline{B}_s^*(g_0)$ where $g_0^2=dx^2+dy^2$ on the level $\RR^2\times\{0\}$, it is easy to check that we get the \textit{Solv} metric $dS$ on $\MM=\RR^2\times\RR$,
$$dS^2=\lambda^{-2s}dx^2+\lambda^{2s}dy^2+dt^2\text{ for }s\in[0,1]$$
Hence the metric we defined, $\OG_1=\mathcal{H}^*(dS)$, is a bounded perturbation of the \textit{Solv} metric on each level $\RR^2\times\{s\}$.

\end{remark}
\subsection{Defining the metric on the whole manifold:} 
As before we consider a collection of attracting and repelling Franks-Williams type hyperbolic plugs 
$$\{(\M_1,\Psi_t^1);(\M_2,\Psi_t^2);(\M_3,\Psi_t^3);...;(\M_n,\Psi_t^n\}$$

By our construction, we have a metric $G_i$ on $M_i\subset \M_i$ for each $i$. 
The (possibly disconnected)
surface $\partial M_1$ is smooth in the metric in $M$
and hence inherits a Riemannian metric. 
With a little bit of care we can assume without loss of generality
that all boundary components of all $\partial M_i$ are
pairwise isometric.
Now we smoothly extend the metrics $G_i$ on the union collar neighbourhoods $\cup_i \partial M_i\times [0,1]$ to get a Riemannian metric $\GG$ on the whole manifold $\NN$, in particular, $\GG|_{M_i}=G_i$ for all $i$. 
This metric in $\partial M_i \times [0,1]$ is a product
metric, so that projection onto the first factor is a 
length decreasing map.

%The manifold $\N$ can be decomposed as 
%$$\N=(\cup_i M_i)\bigcup(\cup_i (\partial M_i\times [0,1]))$$
%Next consider a diffeomorphism $\Omega: \mathcal{D}^{out}\rightarrow \mathcal{D}^{in}$ such that $\Omega(\LE^{ws}(\Lambda_i))$ is tranversal to $\LE^{wu}(\Lambda_j)$ on the appropriate components as described in section \ref{manifold and flow}. The manifold is constructed as 
%$$\N=\frac{\M_1\sqcup\M_2\sqcup...\sqcup \M_n}{\Omega(x)\sim x}$$
%The semiflows $\{\Psi_t^i\}$ are naturally glued along the connected components of $\mathcal{D}^{out}$ and $\mathcal{D}^{in}$ w.r.t $\Omega$ and the glued flow is Anosov on $\N$ by \cite{BBY17}. 

%Similarly to define a Riemannian metric on $\M$ we can patch up the metrices $\G_i$s on $\M_i$s using a partition on unity. As described in the previous section, each $\M_i$ can be decomposed as $\M_i=(M_i\sqcup\{\text{ a collection of collars each homeomorphic to }\TT\times[0,1]\})/{\sim}$. We attach the metrices in such a way that the metrices $\mathcal{G}_i$ are not affected on $M_i$ and all changes happen in the collar neighbourhoods. 

\begin{remark} The manifolds $\M_i$ have boundary
which is $\pi_1$-injective in $\N$ for any $i$, because 
each boundary component of each $\M_i$ is $\pi_1$-injective
in $\M_i$. 
It follows that any lift of $\M_i$ to the universal
cover $\NN$ is itself a universal cover of $\M_i$. 
So we can think of these lifts as copies $\MM_i$ of
the universal cover of $\M_i$ which are contained
in $\NN$.
\end{remark}

The restriction of $\GG$ to a lift $\MM_i$ of a
single hyperbolic plug  is denoted by $\GG_i$, and the path metric induced by $\GG_i$ on $\MM_i$ is denoted by $d_{\GG_i}$.

\vskip .07in
\textbf{Notation:}\begin{itemize}
    \item For any $\M_i$, $\G|_{\M_i} =\G_i$
    \item for any two point $p_1$ and $p_2$ in $\MM_i$,
    $$d_{\GG_i}(p,q)=\text{minimum}\{\text{length}_{\GG_i}(\sigma)|\sigma\text{ is a curve contained in }\MM_i\text{ connecting }p_1,p_2\}$$
\end{itemize}
 
%Here $\GG_i$ is the lift of $\G_i$ to a cover of $\M_i$,
%or equivalently $\GG_i = \GG_i{\MM_i}$.

We finish this section showing that the flow lines or flow rays contained in the lift $\MM_i$ of a single hyperbolic plug in the universal cover $\NN$ is a quasigeodesic w.r.t. the restricted metric $d_{\GG_i}$. 

\begin{lemma}\label{qgo1}
There is an $\epsilon>0$ such that if $\gamma$ is a flow ray or flow line fully contained in some $\MM_i$, and $p_1$, $p_2$ are two points on $\gamma\subset\MM_i$, then
$$\text{length}_{\GG_i}(\gamma_{[p_1,p_2]})\leq d_{\GG_i}(p_1,p_2)+\epsilon$$
\end{lemma}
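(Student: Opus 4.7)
The plan is to reduce to Lemma \ref{qgo}, which already shows that flow lines are globally length-minimizing inside the lift $\widetilde{M}_i$ of the interior piece $M_i$ of the plug. The only new ingredient needed is to control what happens when the flow segment, or the comparison curve $\sigma$, wanders into the collar neighborhood $\partial M_i\times[0,1]$ that was appended to form $\M_i$. Two structural facts will be used: first, the collar is equipped with a product metric whose projection onto the $\partial M_i$ factor is length non-increasing (so there is a canonical length non-increasing retraction $r\colon \partial M_i\times[0,1]\to \partial M_i$); second, the flow through the collar is topologically a product flow of fixed (bounded) transit time, so a flow trajectory can stay in the collar for at most a uniformly bounded length.

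First I would handle the case where both $p_1$ and $p_2$ lie in the lift of $M_i$. For an attracting plug (the repelling case is identical after reversing time) the lift of $M_i$ is positively invariant, so the whole segment $\gamma_{[p_1,p_2]}$ lies in the lift of $M_i$. Given any curve $\sigma\subset\MM_i$ from $p_1$ to $p_2$, I would apply the lifted retraction $\tilde r$ componentwise on every excursion of $\sigma$ into a collar lift; since $\tilde r$ is length non-increasing and the endpoints of each excursion lie on $\partial M_i$, this produces a new curve $\sigma'$ contained in the lift of $M_i$ with $\mathrm{length}_{\GG_i}(\sigma')\leq \mathrm{length}_{\GG_i}(\sigma)$ and the same endpoints. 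Lemma \ref{qgo} then gives $\mathrm{length}_{\GG_i}(\sigma')\geq \mathrm{length}_{\GG_i}(\gamma_{[p_1,p_2]})$, so in this case the inequality holds with $\epsilon=0$.

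For the general case, let $\epsilon_0$ be a uniform bound on the length of any flow segment through a collar lift (this exists since the collar has width $1$ and the flow speed is uniformly bounded above). If $p_1$ or $p_2$ sits in a collar lift, I would push it along the flow into the lift of $M_i$, obtaining $p_1'$ and $p_2'$ with $d_{\GG_i}(p_j,p_j')\leq \mathrm{length}_{\GG_i}(\gamma_{[p_j,p_j']})\leq \epsilon_0$. The previous case applied to $\gamma_{[p_1',p_2']}$ yields $\mathrm{length}_{\GG_i}(\gamma_{[p_1',p_2']})\leq d_{\GG_i}(p_1',p_2')$, and then the triangle inequality gives
\[
\mathrm{length}_{\GG_i}(\gamma_{[p_1,p_2]})\ \leq\ \mathrm{length}_{\GG_i}(\gamma_{[p_1',p_2']})+2\epsilon_0\ \leq\ d_{\GG_i}(p_1,p_2)+4\epsilon_0,
\]
so we may take $\epsilon=4\epsilon_0$.

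The main subtle point I foresee is justifying that the lifted retraction really lands in $\MM_i$ and behaves well on $\sigma$: one must check that the collar components in $\MM_i$ are lifts of $\partial M_i\times[0,1]$ attached to the lift of $M_i$ along $\pi_1$-injective tori (this is exactly the remark preceding the lemma, so $\MM_i$ is genuinely a universal cover of $\M_i$ and the retraction lifts canonically). Once this is in place, the argument is elementary; the bulk of the work is really inside Lemma \ref{qgo}, which has already been proven.
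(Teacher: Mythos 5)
Your proposal is correct and follows essentially the same strategy as the paper: reduce to Lemma \ref{qgo} via the length-nonincreasing retraction of the collar onto $\partial M_i$, and absorb the collar contribution into the additive constant using the uniform bound on the length of a flow segment crossing $\partial M_i\times[0,1]$. The only cosmetic difference is that the paper splits the flow segment at its crossing point with $\partial\widetilde M_1$ and bounds that tail directly, whereas you move the endpoints into $\widetilde M_i$ and apply the triangle inequality; both yield the same uniform $\epsilon$.
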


\begin{proof}
W.l.o.g. we prove the lemma in $\MM_1$. Note that $\widetilde{M}_1\subset \MM_1$ and if $p_1,p_2\in\widetilde{M}_1$, then the result is true because inside $\widetilde{M}_1$, flow segments are length minimizing in $\widetilde{M_1}$
by Lemma \ref{qgo}. 
Furthermore by construction of the metric in 
$\partial M_1 \times [0,1]$, there
is a length decreasing retraction $\M_1$ to $M_1$,
so any minimal path in $\M_1$ between points
in $M_1$ is actually contained in $M_1$.
It follows that the
result works if $p_1, p_2 \in \widetilde{M}_1$.

Hence we assume that $p_2\in \MM_1\setminus \widetilde{M}_1$,
and $p_1 \in \widetilde{M}_1$.

$\MM_1$ can be written as $\widetilde{M_1}\cup(\partial \widetilde{M}_1\times[0,1])$. By the compactness of $\partial M_1\times[0,1]$ we can consider $\epsilon>0$ such that for any flow ray $\gamma$,
$$\text{length}_{\GG_1}(\gamma\cap (\partial \widetilde{M}_1\times [0,1]))\leq \epsilon/2$$
As there are only finitely many plugs, we can choose $\epsilon$ big enough such that it works for all $\partial \widetilde{M}_i\times [0,1]$. 

Now consider the flow segment $\gamma_{[p_1,p_2]}$, as $p_2\in\MM_1\setminus \widetilde{M}_1$, the flow segment must intersect $\partial \widetilde{M}_1$ at a single point, say $p_3$. Then
\begin{equation*}
    \begin{split}
        \text{length}_{\GG_1}(\gamma_{[p_1,p_2]})&=\text{length}_{\GG_1}(\gamma_{[p_1,p_3]})+\text{length}_{\GG_1}(\gamma_{[p_3,p_2]})\\
        					 &=d_{\GG_1}(\gamma_{[p_1,p_3]})+\text{length}_{\GG_1}(\gamma_{[p_3,p_2]})\\
                                                 &\leq d_{\GG_i}(p_1,p_3)+\epsilon/2\\
                                                 &\leq d_{\GG_i}(p_1,p_2)+d(p_2,p_3)+\epsilon/2\\
                                                 &\leq d_{\GG_i}(p_1,p_2)+\epsilon/2+\epsilon/2
    \end{split}
\end{equation*}
\end{proof}

\section{Analysis of the Flowlines}\label{S5}
In this section we show that every flow line of the flow $\widetilde{\Psi}_t$ on $\NN$ is quasigeodesic with respect to the path metric induced by Riemannian metric $\GG$ as constructed in the previous section.
As before, $\N$ is made of the collection of hyperbolic plugs 
$$\{(\M_1, \Psi_t^1, \mathcal{G}_1);(\M_2, \Psi_t^2, \mathcal{G}_2);(\M_3, \Psi_t^3, \mathcal{G}_3);...;(\M_n, \Psi_t^n, \mathcal{G}_n)\}$$
First we consider a single hyperbolic plug, say $(\M_1,\Psi^1_t, \G_1)$, and analyze the properties of the flow lines of the semi-flow $\Psi^1_t$.

\subsection{Flowlines in the plug $(\M_1,\G_1,\Psi^{1}_t)$}\label{plug}

Recall that $\M_1$ is made from $M=\frac{\TT\times \RR}{(q,t)\sim(\Phi(q),t-1)}$ as in Subsection \ref{Const}, where $\Phi=A \circ \phi$ is a repelling DA map. The universal cover $\widetilde{M}$ can be considered as $\RR^2\times\RR$ equipped with the coordinate system $\{x,y,t\}$ where $x$-directions and $y$-directions are parallel to the strong stable and the strong unstable directions of the hyperbolic map $\widetilde{A}$ on $\RR^2$ and $t$-directions are along the flow lines of the suspension flow $\widehat{\psi}^1_t$. 

Consider the repelling periodic orbit $\C$ of $\psi_t$ in $M$ and $N(\C)$ is an open solid torus neighbourhood of $\C$ as described in subsection 2.1. Let $\widehat{N(\C)}$ be the collection of lifts of $N(\C)$ in $\RR^2\times \RR$, then $\OM_1=(\RR^2\times \RR)\setminus \widehat{N(\C)}$ is an infinite cover of $M_1=M\setminus N(\C)$, but not the universal cover.

$M_1$ is equipped with the Riemannian metric $G_1$ as constructed in the previous section, similarly, we denote the lifted metric on $\OM_1$ by $\OG_1$. 

$\OM_1$ is a manifold with boundary, where $\partial\OM_1$ is the lift of $\partial{M_1}=T_1$. $\partial\OM_1$ is a collection of disjoint 
infinite cylinders in $\RR^2\times \RR$ which are transverse to the flow lines. Suppose $\partial \OM_1=\cup_{i\in\mathbb{N}}\OOT_i$, where $\OOT_i$'s are the infinite cylindrical boundary components of $\partial \OM_1$. 

There are exactly two types of flow lines of the lifted  semiflow $\widehat{\psi}^1_t|_{\OM_1}$ in $\OM_1$, if $\gamma$ is a flow line in $\OM_1$, then
\begin{enumerate}
     \item either $\gamma$ is contained in $\OA$, the lift of the attractor $\A\subset M_1$, or
    \item $\gamma$ intersects a lift of $\partial M_1=T_1$. Moreover $\gamma$ intersects exactly one such lift of $T_1$. 
\end{enumerate}

\begin{remark}
When we are dealing only with $M_1, \M_1$
we will simplify notation and denote $\psi^1_t$ and $\Psi^1_t$
by $\psi_t$ and $\Psi_t$.
\end{remark}

 In this subsection we show that almost all flow rays which intersect the boundary $\partial\OM_1$ `go away' from the boundary component it intersects in an efficient manner as time $t$ goes to positive infinity. In the next subsection, we extend the result in the universal cover $\MM_1$. To state the precise statement we first need to fix some notations.
%\begin{figure}
%			\centering
%			\includegraphics[width=1\linewidth]{qg2}
%			\caption{}
%			\label{fig1}
%		\end{figure}

		\begin{figure}[h!]
  \centering
  \begin{subfigure}[b]{0.48\linewidth}
    \includegraphics[width=\linewidth]{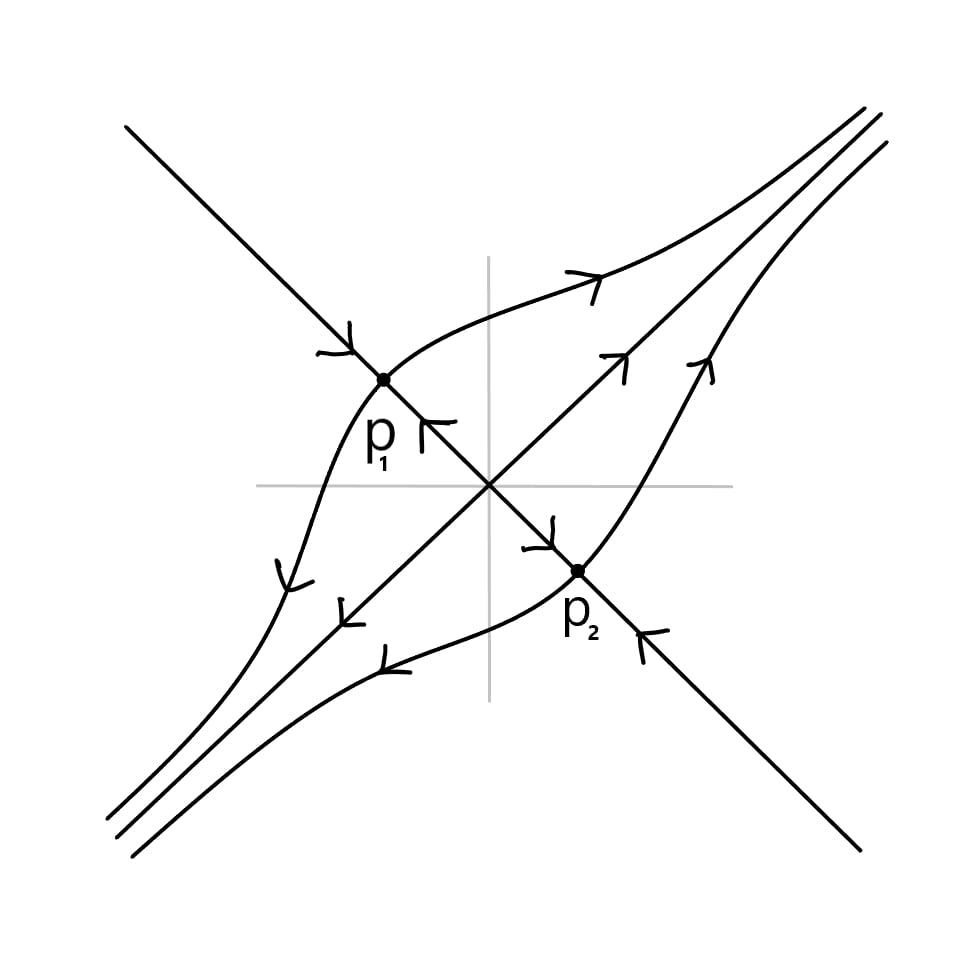}
     \caption{ }
  \end{subfigure}
  \hskip .05in
  \begin{subfigure}[b]{0.5\linewidth}
    \includegraphics[width=\linewidth]{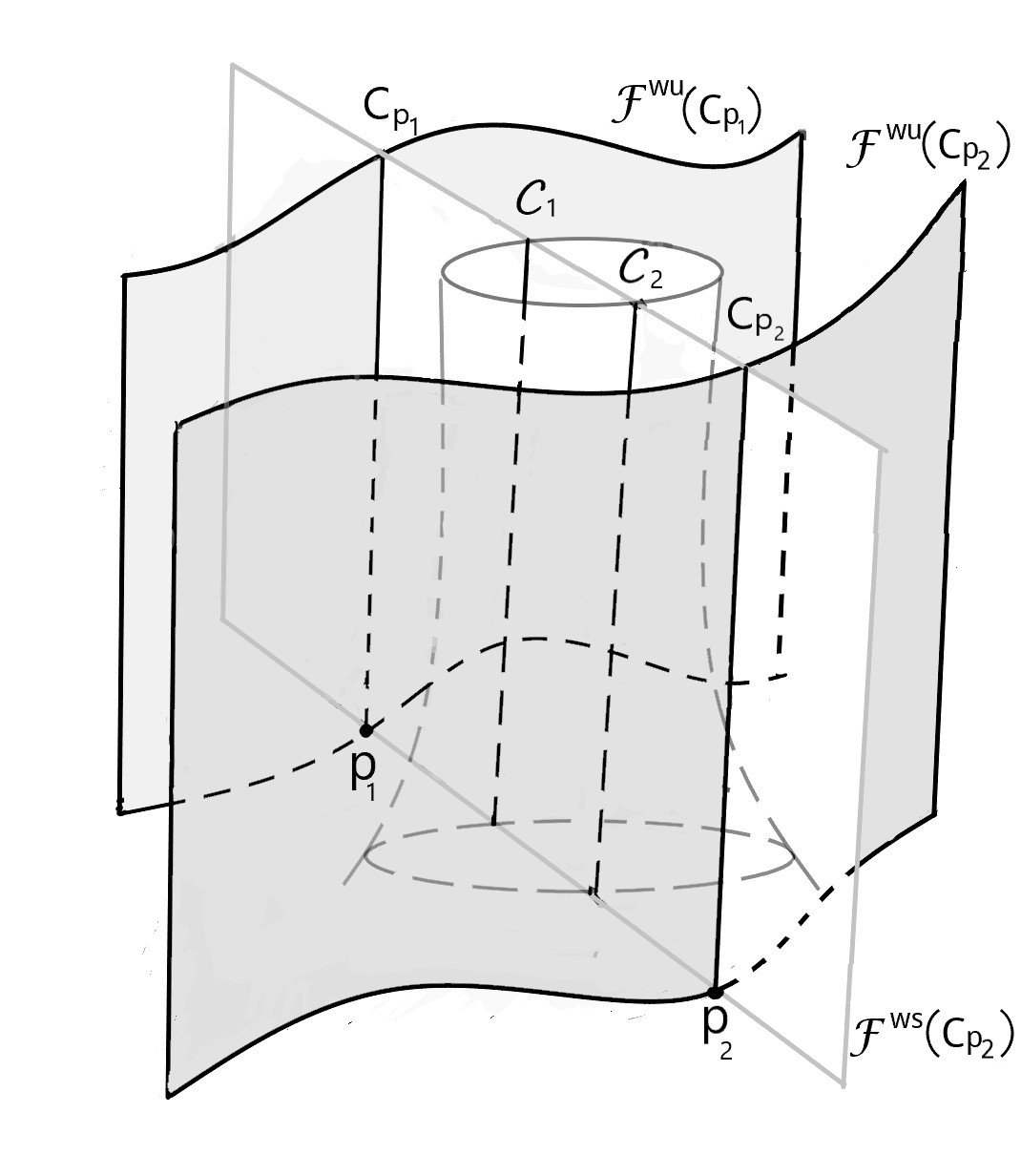}
     \caption{ }
  \end{subfigure}
    \caption{(A) The two dimensional image of the blow up of
a hyperbolic point,
  (B) The three dimensional image of the blow up
of a hyperbolic orbit.}
  \label{fig1}
\end{figure}
		
Consider the repelling fixed point $\mathfrak{o}$ of the DA map $\Phi=A \circ\phi$ on $\TT$. There are also two hyperbolic fixed points, denoted by
$p_1$ and $p_2$, on the attractor in $\TT$ as shown in Figure \ref{fig1} (A). In the suspension manifold $M=\TT\times[0,1]/\sim$ there are two periodic orbits, $C_{p_1}$ and $C_{p_2}$, coming from $p_1$ and $p_2$ and these two orbits are contained the attractor $\A$ of $\psi^1_t$. Now we consider the repelling orbit $\C$ and the open solid torus neighbourhood $N(\C)$ around it. The weak stable leaves of $C_{p_1}$ and $C_{p_2}$ intersect the boundary torus $T_1=\partial N(\C)=\partial M_1$ in two circles, denoted by
$\C_1$ and $\C_2$, as shown in Figure \ref{fig1} (B). 
\begin{definition}
Suppose $q\in \OM_1$ is such that the flow line $\gamma_q$ through $q$ intersects $\partial\OM_1$ at the boundary component denoted by $\widehat{T}_q$. We know that the flow ray $\gamma_q$ can intersect exactly one such component of $\partial \OM_1$. Then let
\begin{itemize}

    \item $\D_{\OG_1}(q,\widehat{T}_q)$ ( resp. $\D_{S}(q,\widehat{T}_q)$) denote the distance between $q$ and $\widehat{T}_q$ w.r.t. the path metric induced by $\OG_1$ ( resp. the Solv metric $dS$ ) on $\OM_1$. 

    \item $\ell_{\OG_1}(q)$( resp. $\ell_{S}(q)$) is the length of the flow line segment connecting $q$ and $\widehat{T}_q$ w.r.t. $\OG_1$( resp. $dS$).

\end{itemize}
\end{definition}
Next we fix a small number $\delta>0$ and consider the open $\delta$-neighbourhood $N_{\delta}(\C_1\cup \C_2)$ on $T_1$ and let $N_{\delta}(\widehat{\C}_1\cup \widehat{\C}_2)$ denote the lift of
$N_{\delta}(\C_1\cup \C_2)$ in $\OM_1$. Notice that 
$\CC_1\cup\CC_2$ is a countable, infinite
 collection of propertly embedded
lines in $\OM_1$. 
The following is a key technical result used in this article.
\begin{proposition}\label{key}
There exists $C > 1$ and $c > 0$ satisfying the following:
Let $q\in \OM_1$ such that the flow ray through $q$  which intersects $\partial\OM_1$, say at the boundary component $\widehat{T}_q$. If $\gamma_q$ intersects $\widehat{T}_q$ on the region $\widehat{T}_q\setminus N_{\delta}(\CC_1\cup\CC_2)$, 
then
$$\ell_{\OG_1}(q)\leq C\D_{\OG_1}(q, \widehat{T}_q) +c$$
%Moreover the constants $C$ and $d$  do not depend on $q$ or $\overline{T}_q$. 
\end{proposition}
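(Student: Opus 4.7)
The plan is to reduce to the Solv metric $dS^2 = \lambda^{-2t}dx^2 + \lambda^{2t} dy^2 + dt^2$ on $\RR^2 \times \RR$ and carry out the distance estimate there. By Remark \ref{solv} we have $\OG_1 = \mathcal{H}^* dS$ for a bi-Lipschitz diffeomorphism $\mathcal{H}$, so any $(C_0,c_0)$-inequality established in $dS$ transfers to $\OG_1$ with only a fixed inflation of the constants; I will work directly in Solv. After a deck translation we may take $\widehat{T}_q$ to be the tube about the vertical axis through $(0,0)$, with cross-section $K_t$ satisfying $K_{t+1} = \widetilde{\Phi}^{-1}(K_t)$. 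Since the DA construction places the hyperbolic periodic points $p_1, p_2$ on the stable $x$-axis of $A$ in $\TT$, the weak stable manifolds of the lifts of $C_{p_1}, C_{p_2}$ in $\RR^2\times\RR$ are bounded perturbations of the horizontal planes $\{y\in \ZZ\}$, and $\CC_1\cup \CC_2$ is the intersection of $\widehat{T}_q$ with these planes. Choosing $N(\C)$ thin enough (without loss of generality), the $y$-range of $K_t$ is contained in $(-1/2,1/2)$ for $t \geq t_0$, so the only relevant integer is $0$. The hypothesis $q_0\notin N_\delta(\CC_1\cup \CC_2)$, combined with compactness of $T_1$, yields $\delta'>0$ with $|y_0|\geq \delta'$, where $q_0 = (x_0,y_0,t_0)$. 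Writing $q = (x_0,y_0,t_0+T)$, the flow line is vertical and $\ell_{dS}(q) = T$.

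The heart of the argument is the lower bound $\D_{dS}(q, \widehat{T}_q) \geq c' T - c''$. Two ingredients: first, $dt$ is a closed $1$-form of Solv-norm one, so for any path $\sigma$ from $q$ to $p = (x_p,y_p,t_p)\in \widehat{T}_q$ we have $\mathrm{length}_{dS}(\sigma) \geq |t_0+T-t_p|$. Second, near the axis $\widetilde{\Phi}^{-1}$ acts as $\operatorname{diag}(\lambda/\Theta, 1/\lambda)$ with $\Theta>\lambda>1$, so the $y$-extent of $K_{t_0+\alpha}$ decays like $\lambda^{-\alpha}$; choose $C_4$ so that for $\alpha \geq C_4$ every point of $K_{t_0+\alpha}$ has $|y|<\delta'/2$. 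If $t_p \leq t_0+C_4$, the $t$-Lipschitz bound immediately gives $\mathrm{length}_{dS}(\sigma) \geq T - C_4$. If $t_p > t_0+C_4$, then $|y_0-y_p|\geq \delta'/2$, and one can show using the closed $1$-form $dF = \ln\lambda\, dt + dy/y$ (defined on $y\neq 0$, of Solv-norm bounded on $\{\lambda^t|y|\geq 1\}$) that $\mathrm{length}_{dS}(\sigma) \geq c' T - c''$ for a constant $c'>0$ close to $1$. Taking the infimum over $p$ gives $\D_{dS}(q, \widehat{T}_q) \geq c' T - C_6$, hence $\ell_{dS}(q) = T \leq C\cdot \D_{dS}(q,\widehat{T}_q) + c$ with $C = 1/c' > 1$; transferring back via $\mathcal H$ completes the proof.

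The main obstacle will be making the second ingredient rigorous when the path $\sigma$ wanders outside $\{\lambda^t |y| \geq 1\}$ or changes sign in $y$. Since at $q$ we have $\lambda^{t_0+T}|y_0| \geq \lambda^{t_0+T}\delta' \gg 1$ for $T$ large, any path $\sigma$ from $q$ to $p$ must spend the bulk of its length inside $\{\lambda^t|y|\geq 1\}$, where $dF$ is Lipschitz; a short portion near the end (if $\sigma$ exits this region) contributes only a bounded additive error absorbed into $c''$. A clean alternative is to project $\sigma$ to the $(y,t)$-plane, observe that the induced metric $\lambda^{2t}dy^2 + dt^2$ is (after the change of variable $u=\lambda^{-t}$) the upper half-plane model of hyperbolic geometry, and appeal to the standard horocyclic distance estimate there. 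Paths that change sign in $y$ or wander through nearby integer translates of $\widehat{T}_q$ are handled similarly, using the $\ZZ$-translation invariance of $dS$ and localization to a fundamental domain.
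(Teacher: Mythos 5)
Your proposal is correct in substance, but it follows a genuinely different route from the paper. The paper first proves the estimate only for orbits meeting $\widehat{T}_0$ on the $yt$-plane $\{x=0\}$ (Lemma \ref{unstable}), via a two-dimensional computation in the metric $\lambda^{2t}dy^2+dt^2$, and then propagates it to all of $\widehat{T}_0\setminus N_{\delta}(\CC_1\cup\CC_2)$ dynamically: each such $q$ lies on the same weak stable leaf as a point $\bar q$ of $\widehat{T}_0\cap\{x=0\}$, the forward orbits of $q$ and $\bar q$ converge, and a triangle inequality together with compactness of a fundamental domain of $\widehat{T}_0\setminus N_{\delta}$ yields uniform constants $s_1,s_2$. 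You instead estimate $\D_{\OG_1}(q,\widehat{T}_q)$ directly for every admissible $q$, splitting over the height $t_p$ of the endpoint on the tube: for $t_p$ below a threshold the $1$-Lipschitz coordinate $t$ already gives length at least $T-C_4$, while for $t_p$ large the exponential decay of the $y$-extent of $K_t$ forces $|y_0-y_p|\geq\delta'/2$, and the horocyclic distance estimate in the $(y,t)$-plane gives length at least $T-O(1)$. Your argument is more elementary and sharper (purely additive error in the Solv model), and it bypasses the shadowing step entirely; the paper's argument is more robust in that it never needs the coordinate description of $\C_1\cup\C_2$, only that the complement of the bad region in a fundamental domain is compact and is swept by stable segments onto the unstable plane of the blown-up orbit.

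Two points need tightening, though neither is fatal. First, ``$|y_0|\geq\delta'$'' is false as stated: $N_{\delta}$ is defined on the compact torus downstairs and lifted, so both the tube and the bad region shrink in $y$ like $\lambda^{-t}$, and being outside $N_{\delta}$ only gives $\lambda^{t_0}|y_0|\gtrsim\delta'$. You must first normalize $t_0$ into a fundamental domain of the $\Gamma$-action on the tube (the paper's isometries $\mu_a$ serve the same purpose) and then keep $t_0,t_p\geq 0$ in the horocycle estimate; this is exactly what makes the bound come out as $T-O(1)$. Relatedly, the weak stable planes are exactly $\{y=c\}$ (the DA map preserves the horizontal foliation, so no ``bounded perturbation'' is involved), with $c$ ranging over the $y$-coordinates of lattice points in the eigenbasis rather than over $\ZZ$; after the normalization only $c=0$ meets the thin part of the tube, as you intend. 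Second, commit to the projection route rather than the $1$-form $dF$: since $\mathcal{H}$ alters only the $x$-coordinate, it commutes with $\Pi(x,y,t)=(y,t)$, so $\mathrm{length}_{\OG_1}(\sigma)\geq \mathrm{length}_{\lambda^{2t}dy^2+dt^2}(\Pi(\sigma))\geq d_{\mathbb{H}^2}(\Pi(q),\Pi(p))$ holds with no hypotheses on where $\sigma$ wanders or on the sign of $y$. The ``main obstacle'' you single out simply does not arise on that route, and it also removes any need to control the global distortion of $\mathcal{H}$ off the $yt$-plane.
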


\begin{proof} We will prove the lemma in the manifold $\widetilde{M}=\RR^2\times\RR$ using the coordinate system $\{x,y,t\}$ on it.
This is enough, because we have $\OM_1\subset \widetilde{M}=\RR^2\times\RR$ and distance between two points in $\OM_1$ w.r.t. the metric $d_{\OG_1}$ is bigger than 
distance in $\RR^2\times\RR$ w.r.t. $d_{\OG}$. 
In addition length of a flow segment is the same in both $\OM_1$ and
 $\widetilde{M} = \RR^2\times\RR$, \  as the Riemannian metric 
in $\OM_1$ is the one induced from the inclusion
$\OM_1 \subset \widetilde{M} = \RR^2\times\RR$.

It is enough to prove the result for one  component of the boundary $\partial \OM_1$ as we can permute the boundary components using the translation isometries. In addition up to changing the coordinates $(x,y)$ we can
assume the orbit passes through $(0,0,0)$.
In other words
we fix the lift of the repelling orbit $\C$ passing through $(0,0,0)\in \RR^2\times\RR$, i,e the line $\{(0,0,t)|t\in\RR\}$ and the lift of the $\partial M_1=T_1$ around this line, which we denote by $\OOT_0$.

We will first prove the result for the points $q$ on the $yt$-plane $\{x=0\}$. The reason behind it is that the Riemannian metric on on the $yt$-plane is almost $\lambda^{2t}dy^2+dt^2$. We make this more precise in the following remark:

\begin{remark}\label{sol}
\begin{enumerate}

  \item Note that the map $\tilde{\phi}:\RR^2\rightarrow\RR^2$ perturbs the $y$-directions only near the lifts in $\RR^2$ of the repelling fixed point $\frak{o}$, and outside those neighbourhoods the map $\widetilde{\phi}$ is the identity map. Hence it is also true that that map $\mathcal{H}$ distorts the $yt$-plane boundedly in $\RR^2\times\RR$. 
By Remark \ref{solv}, we know that $\OG_1=\mathcal{H}^{*}(dS)$ where $dS$ is the $Solv$ metric.
In particular, the $Solv$ metric restricted on the $yt$-plane is $\lambda^{2t}dy^2+dt^2$, and the $x$-directions are everywhere perpendicular to the $yt$-plane. Hence we can assume that the Riemannian metric on the $yt$-plane induced by $\OG_1$ is boundedly distorted from and very close to
the $Solv$ metric. In other words, we can find two constants $a_0>1$ and $a_1>0$ such that if $\sigma$ is a curve on the $yt$-plane then

$$\frac{1}{a_0}\text{length}_{\OG_1}(\sigma)-a_1\leq\text{length}_{S}(\sigma)\leq a_0\text{ length}_{\OG_1}(\sigma)+a_1$$

where $\text{length}_S(\sigma)$ and $\text{length}_{\OG_1}(\sigma)$ means length of $\sigma$ w.r.t. the  Solv metric $dS$ and $\OG_1$ respectively.

\item Note that the $yt$-plane is the unstable leaf through $(0,0,0)$ for the suspension of the linear Anosov map $\tilde{A}$.
%, it is not a weak unstable leaf of the suspension of $\widetilde{A}\circ \widetilde{\phi}$, the $DA$ bifurcation of $\widetilde{A}$. In particular, 
Notice that the $yt$-plane is not invariant under the map $\widetilde{A}\circ \widetilde{\phi}$.

\end{enumerate}
\end{remark}
 Now we can state the precise result we want to prove on the $yt$-plane:
%Recall that the blow up map $\widetilde{\phi}:\RR^2\rightarrow \RR^2$ is defined by 
%$\widetilde{\phi}(x,y)=(\tilde{\theta}(x,y)x,y)$, and it does not perturb the $yt$-plane near the line $\{(0,0,t)|t\in\RR\}$, more precisely $\widetilde{\Phi}(0,y)=(0,y)$ for all point $(0,y)$ near $(0,0)\in\RR^2$. 

\begin{lemma}\label{unstable}
There exists $K>1$ and $k>0$ such that for any $q$ on the intersection
of the $yt$-plane with $\OM_1$ for which the flow line $\gamma_q$ intersects $\OT$, we have 
$$\ell_{\OG_1}(q)\leq K\D_{\OG_1}(q, \OT)+k$$
\end{lemma}

\begin{proof}

Consider a point $q$ on the $yt$-plane such that the flow ray $\gamma_q$ passing through $q$ intersects $\OOT_0$. 

As the manifolds are complete every distance can be realized by a curve. Suppose $\sigma:[0,1]\rightarrow \widetilde{M}$ denotes a shortest path connecting $q$ and $\OOT_0$ in $\OM_1$.
\begin{claim}
Any shortest path $\sigma$ between $q$ and $\OOT_0$ lies on $yt$-plane. 
\end{claim}
\begin{proof}
Suppose $\sigma$ is parametrized as $\sigma(t)=(\sigma_1(t),\sigma_2(t),\sigma_3(t))$ in $\RR^2\times \RR$. Consider the projection of $\sigma$ on the $yt$-plane, $\Pi(\sigma(t))=(0,\sigma_2(t),\sigma_3(t))$. Suppose $\Pi(\sigma(a))$ is the first intersection point of $\Pi(\sigma)$ and $\OT$. We show that 
$$\text{length}(\Pi(\sigma))|_{[0,a]}\leq \text{length}(\sigma)$$
and the claim follows from that. 

In Remark \ref{solv} we have seen that the metric $\OG_1$ can be expressed as the pull-back of the $Solv$ metric $dS$ by the map $\mathcal{H}$,
i,e $\widehat{G}_1=\mathcal{H}^*(dS)$, where $dS=\lambda^{-2s}dx^2+\lambda^{2s}dy^2+dt^2$

Consider the map $\mathcal{H}:\RR^2\times\RR\rightarrow \RR^2\times\RR$ as in Remark \ref{solv} . As $\OG_1=\mathcal{H}^*(dS)$, 
for any curve $\zeta$,
$$\text{length}(\zeta)\text{ w.r.t. the metric }\OG_1=\text{length}(\mathcal{H}(\zeta))\text{ w.r.t. the metric }dS$$

As the directions $x$, $y$ and $t$ are orthogonal to each other w.r.t. the \textit{Solv} metric $dS$, it is easy to check that for any curve $\zeta$ in $\RR^2\times\RR$ and its projection $\Pi(\zeta)$ on $yt$-plane, we have
$$\text{length}(\Pi(\delta))\leq \text{length}(\sigma)\text{ w.r.t. the metric }dS $$

Moreover, the projection map $\Pi$ commutes with $\mathcal{H}$ because $\mathcal{H}$ does not change the $y$-coordinate of a point and $\Pi$ is projection on the $yt$-plane.
This can also be easily verified by the formulas.

Hence we conclude  
\begin{align*}
   & \text{length}(\Pi(\mathcal{H}(\sigma)))\leq \text{length}(\mathcal{H}(\sigma))\text{ w.r.t. the metric }dS \\
    \implies &\text{length}(\mathcal{H}(\Pi(\sigma)))\leq \text{length}(\mathcal{H}(\sigma))\text{ w.r.t. the metric }dS \\
    \implies &\text{length}(\Pi(\sigma)))\leq \text{length}(\sigma)\text{ w.r.t. the metric }\OG_1
\end{align*}
This completes the claim.
\end{proof}

%By the previous claim we can restrict our attention only on the $yt$-plane. Note that the map $\mathcal{H}(x,y,s)=(\tilde{\theta}(\lambda^{-s}x,\lambda^{s}y)x,y,s)$ does not perturb the $yt$-plane on a strip, 
%\footnote{S: Region, do you mean a strip in teh yt plane?\ac{yes, changed}
%But also it seems that $\mathcal{H}$ does not perturb
%the metric in the whole yt plane rather than just the strip
%$\mathcal{R}$. So why use $\mathcal{R}$\ac{no, YT-plane is unperturbed only near the t-axis, away from it we don not know have control on $\theta$}?
%In fact you have control over the whole $yt$ plane if 
%I understand correctly. I must be missing something here..}
%say $\mathfrak{R}=\{(0,y,t)|-\beta<y<\beta\}$ for some $\beta>0$. It is easy to check that the metric on $\mathfrak{R}$ induced by $\OG_1=\mathcal{H}^*(dS)$ can be expressed as $\lambda^{2t}dy^2+dt^2$. Moreover we can assume that the intersection $\OT\cap\{yt-\text{plane}\}$ is contained in that region $R$, hence the flow saturation of $\OT\cap\{yt-\text{plane}\}$ is also contained in $\mathfrak{R}$. The reason behind we are considering the $yt$-plane is that we have a good control of the formula of the metric on the region $\mathfrak{R}$. 
 
 In the rest of the proof of Lemma
\ref{unstable} we will use the $Solv$ metric on $yt$-plane, i,e the metric $\lambda^{2t}dy^2+dt^2$ for simplicity of calculations. In Remark \ref{iso}
we have seen that the $Solv$ metric on the $yt$-plane is quasi-isometric to the metric induced by $\OG_1$. 
 
%\vskip 0.1in
%\noindent\fbox{%
 %   \parbox{\textwidth}{%
  %     \textbf{Notation:} In the rest of the proof we will denote the $yt$-plane by $\LE$. 
   % }%
%}

\vskip 0.1in

Consider the point $q=(0,c,0)$ with $c>0$ on $\OT\cap yt$-plane and the forward flow ray $\gamma_{q}=\widetilde{\psi}^1_{[0,\infty)}(p)$ as shown in Figure \ref{fig2}, $\gamma_q=\{(0,c,t)|t\in[0,\infty)\}$. 

We prove the lemma for the flow ray $\gamma_q$ first. Later we explain how to 
derive the result for all other flow lines in the $yt$ plane intersecting
$\OOT_0$
from this
particular result on $\gamma_q$.
\vskip 0.1in

\noindent
\textbf{Proof for the flow line $\gamma_q$:} Suppose $q'=(0,c,t')$ is a point on $\gamma_q$ and let $\sigma_{q'}$ is a length minimizing curve on the $yt$-plane joining $q'$ and $\OT$. 
We define three functions as follows:
\begin{figure}
		\centering
			
			\includegraphics[width=1\linewidth]{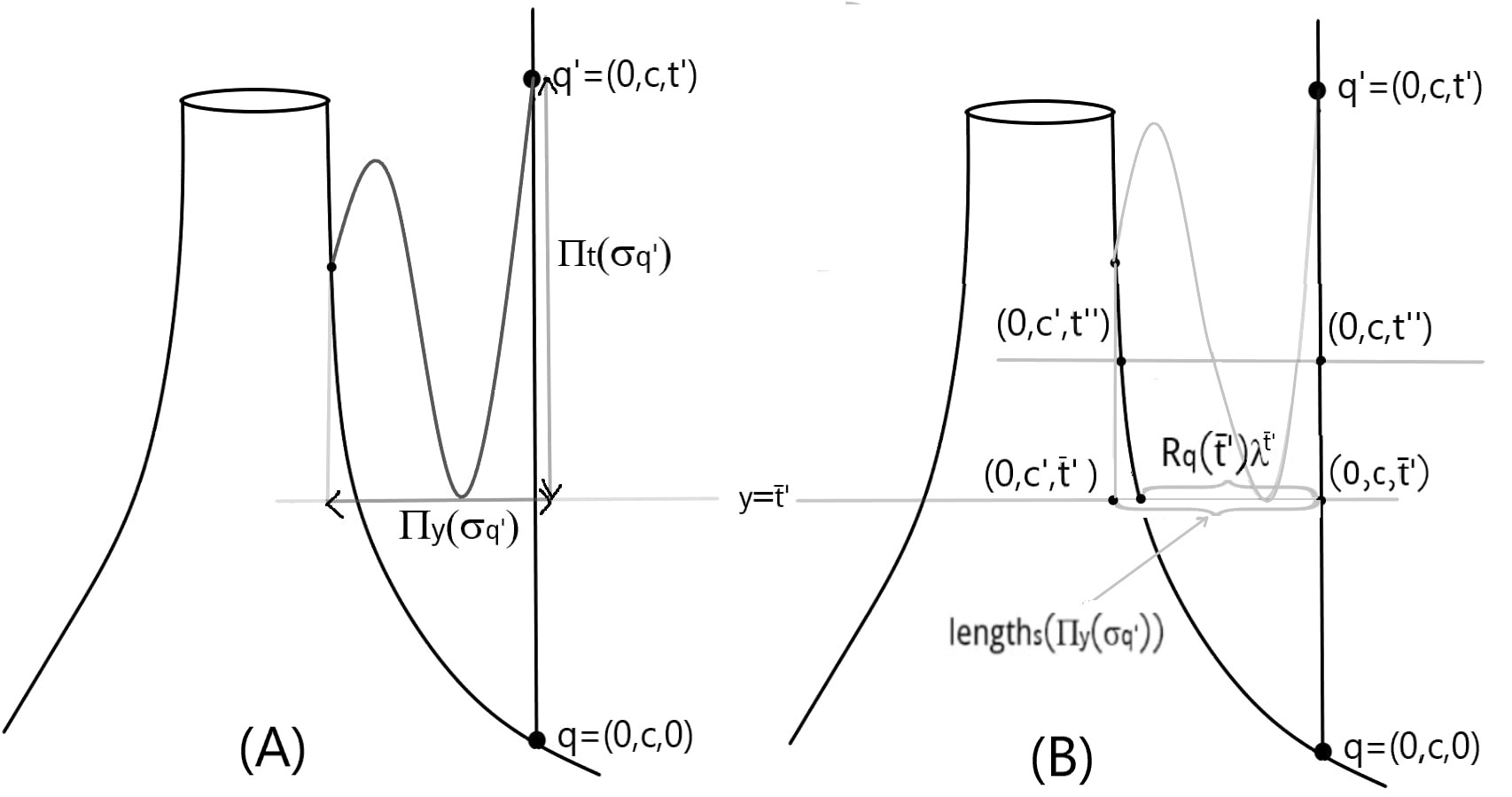}
			\caption{(A) This shows  a minimal
path in the $yt$-plane from $q'$ to the lift of the boundary 
torus, (B) This figure shows several geometric quantities that
are used in the analysis. In particular 
$R_q(\overline{t}') \lambda^{\overline{t}'}$ is the length in the
solvable metric of the horizontal segment depicted at height
$\overline{t}'$.}
			
			\label{fig2}
		\end{figure}

%		\begin{figure}[h!]
 % \centering
  %\begin{subfigure}[b]{0.6\linewidth}
   % \includegraphics[width=\linewidth]{qg1.png}
%    \caption{Figure 2(a)}
%  \end{subfigure}
 % \begin{subfigure}[b]{0.4\linewidth}
 %   \includegraphics[width=\linewidth]{qg3.png}
 %   \caption{Figure 2(b)}
 % \end{subfigure}
  %\caption{ }
  %\label{fig2}
%\end{figure}
		
\begin{enumerate}
    \item Let $\Pi_t(\sigma_{q'})$ denotes the projection of $\sigma_{q'}$ on the vertical line $\gamma_q=\{(0,c,t)|t\in\RR\}$ along $y$-directions
$-$ same as horizontal directions
(notice that $\sigma_{q'}$ is contained in the $yt$ plane).
    \item Suppose $\bar{t'}$ is the lowest $t$-value attained by the curve $\sigma_{q'}$ as shown in the Figure \ref{fig2}(A). Then we denote the projection of $\sigma_{q'}$ on the line $t=\bar{t'}$ along $t$-directions $-$ same
as vertical directions $-$ by $\Pi_y(\sigma_{q'})$.
    \item For a point $(0,c,t'')$ on $\gamma_{q'}$, suppose the line $t=t''$ intersects $\OT$ in the positive side of $y$-direction at $(0,c',t'')$ as shown in the Figure \ref{fig2}(B).  Then we define $R_{q}(t'')=|c-c'|$. Note that, w.r.t. the $Solv$ metric $dS$ the length of the segment on the line $t=t''$ connecting $(0,c,t'')$ and $(0,c',t'')$ is $\lambda^{t''}|c-c'|$ or $R_{q}(t'')\lambda^{t''}$. Clearly $R_{q}$ is an increasing function of $t$ on $[0,\infty)$.

\end{enumerate}
Now we are ready to state two lower estimates of $\D_{S}(q',\OT)$.
\begin{claim}\label{c0}
Suppose $\bar{t'}$ is the lowest $t$-value in the projection $\Pi_t(\sigma_{q'})$ as shown in Figure \ref{fig2}(a). Then 
\begin{enumerate}
    \item $t'-\bar{t'}\leq \D_{S}(q',\OT)$
    
    \item In addition, $R_{q}(\bar{t'})\lambda^{\bar{t'}}\leq \D_{S}(q',\OT)$
\end{enumerate}

\end{claim}
\begin{proof}
As the $t$-directions and $y$-directions are everywhere orthogonal w.r.t the metric $\lambda^{2t}dy^2+dt^2$, it is easy to check that $\text{length}_{S}(\Pi_t(\sigma_{q'}))\leq \text{length}_{S}(\sigma_{q'})$ and  in addition
we have 
$\text{length}_S(\Pi_y(\sigma_{q'}))\leq \text{length}_S(\sigma_{q'})=\D_S(q',\OOT_0)$. 
\begin{enumerate}
    \item 
As the curve $\Pi_t(\sigma_{q'})$ 
%\footnote{S: I think this is $\Pi_t$ instead of $\Pi_y$. Check.}
connects the points $q'=(0,c,t')$ and $(0,c,\bar{t'})$ it is clear that $|t'-\bar{t'}|\leq \text{length}_S(\Pi_t(\sigma_{q'}))\leq\D_{S}(q',\OT)$.

 Note that $\bar{t'}$ can not be a negative number, because if $\bar{t'}<0$, then 
$$t'-\bar{t'}>t'=\ell_S(q')\geq \D_{S}(q',\OT)$$
which can not be true as we have just proved $t'-\bar{t'}\leq \D_{S}(q',\OT)$.
\item
As $\bar{t'}\geq 0$, we observe that the line segment
 $\{(0,y,\bar{t'})|y\in[c-R_{q}(\bar{t'}),c]\}$ is contained in the curve $\Pi_y(\sigma_{q'})$ as shown in Figure \ref{fig2}(b). Hence
$$R_{q}(\bar{t'})\lambda^{\bar{t'}}\leq\text{length}_S(\Pi_t(\sigma_{q'}))\leq\text{length}_S(\sigma_{q'}) =\D_{S}(q',\OT)$$
\end{enumerate}
This proves the claim.
\end{proof}
Consider the function
$P_{q}(t)=\frac{R_{q}(\frac{t}{2})\lambda^{\frac{t}{2}}}{t}$ on $t\in(0,\infty)$.

As the function $R_{q}(t/2)$ is increasing and $\lambda^{t/2}/t$ is strictly increasing for large $t$ values, we can fix a value $k'>0$ such that 
$P_{q}(t)>1$ when $t>k'$.
%\footnote{S: Changed $P_{\gamma_q}$ to $P_q$. Changed
%$R_{\gamma_q}$ to $R_q$. Check.}

For the point $q'=(0,c,t')$, if $t'>k'$ then $R_q(t'/2)\lambda^{t'/2}>t'$. On the other hand $t'=\ell_{S}(q')\geq \D_{S}(q',\OOT_0)$. Combining these two inequalities we get,
\begin{equation}\label{e1}
    \D_{S}(q',\OT)\leq \ell_{S}(q')=t' < R_{q}(t'/2)\lambda^{t'/2} 
\end{equation}
Hence by the Claim item \ref{c0}(2), \  $t'/2$ can not be smaller than the lowest $t$-value attained by the curve $\Pi_t(\sigma_{q'})$, i,e  $t'/2>\bar{t'}$. Otherwise we get
$$R_{q}(t'/2)\lambda^{t'/2}\leq R_{q}(\bar{t'})\lambda^{\bar{t'}}\leq \D_{S}(q',\OT)$$
which contradicts Equation \ref{e1}. 

Finally, as $t'/2>\bar{t'}$, applying Claim \ref{c0}(1) we deduce
\begin{equation}\label{e2}
\begin{split}
   & t'-t'/2\leq t'-\bar{t'}\leq \D_{S}(q',\OT)\\
   \implies & t'/2\leq \D_{S}(q',\OT) \\
   \implies & t'\leq 2\D_{S}(q',\OT)\\
   \implies & \boxed{\ell_S(q')\leq 2\D_{S}(q',\OT)\text{ when }q'=(0,c,t')\text{ and }t'>k'}
\end{split}
\end{equation}
%\footnote{S: Very nice equation and argument!}

If $t'\leq k'$, then for a point $q'=(0,c,t')$ we get 
\begin{equation}\label{e3}
    \ell_{S}(q')\leq k'\leq k'+2\D_{S}(q',\OT)
\end{equation}
Combining \ref{e2} and \ref{e3}, we conclude
\begin{equation}\label{e4}
\ell_{S}(q')\leq2\D_{S}(q',\OT)+k'\text{ for all } q'\in\gamma_q=\{(0,c,t)|t\in[0,\infty)\}
\end{equation}

Finally, by using Remark \ref{sol}, we can deduce $\D_{S}(q',\OT)\leq a_0\D_{\OG}(q',\OT)+a_1$ and replacing it in the previous equation,

$$\ell_{S}(q')\leq2\D_{S}(q',\OT)+k'\leq 2a_0\D_{\OG_1}(q',\OT)+2a_1+k'\text{ for all } q'\in\gamma_q=\{(0,c,t)|t\in[0,\infty)\}$$

By renaming, $2a_0=K$ and $2a_1+k'=k$, and replacing $\ell_{S}(q')=\ell_{\OG_1}(q')$,
$$\boxed{\ell_{\OG_1}(q')\leq K\D_{\OG_1}(q',\OT)+k\text{ for all } q'\in\gamma_q=\{(0,c,t)|t\in[0,\infty)\}}$$

This completes the proof of Lemma \ref{unstable} only on the flow ray $\gamma_q$, where $q=(0,c,0)$. 
To deal with the other flow rays in the $yt$-plane we do the
following: consider a family of maps
$$\mu_a: \widetilde{M} \to \widetilde{M}, 
\ \ \mu_a(x,y,t) \ = \ (\lambda^a x, \lambda^{-a} y, t + a)$$

\noindent
where $a$ is an arbitrary real number.
It is easy to see that any $\mu_a$ is an isometry of
the solv metric $dS$. 
In addition, we choose the original torus $T_0$ transverse
to the flow so that $\mu_a$ leaves invariant 
the fixed lift $\OT$ of $T_0$ to $\mt$
for any $a \in \RR$. Notice that $\mu_a$ 
also fixes the $yt$ plane.
In the $yt$ plane, $\mu_a$ sends flow lines to flow lines.
If $p = \mu_a(q)$, then $p$ is also in $\OT$.
Now for any $p'$ in the forward flow line of $p$,
one obtains Equation \ref{e4} for $p'$ as well, 
since $\mu_a$ is an isometry of the solv metric.
This obtains all flow lines in the $yt$-plane, except
for the flow line through $(0,0,0)$, but this one
does not intersect $\OT$.
Now use Remark \ref{sol} to finish 
the proof of Lemma \ref{unstable}.
\end{proof}

		\begin{figure}[h!]
  \centering
  \begin{subfigure}[b]{0.5\linewidth}
    \includegraphics[width=\linewidth]{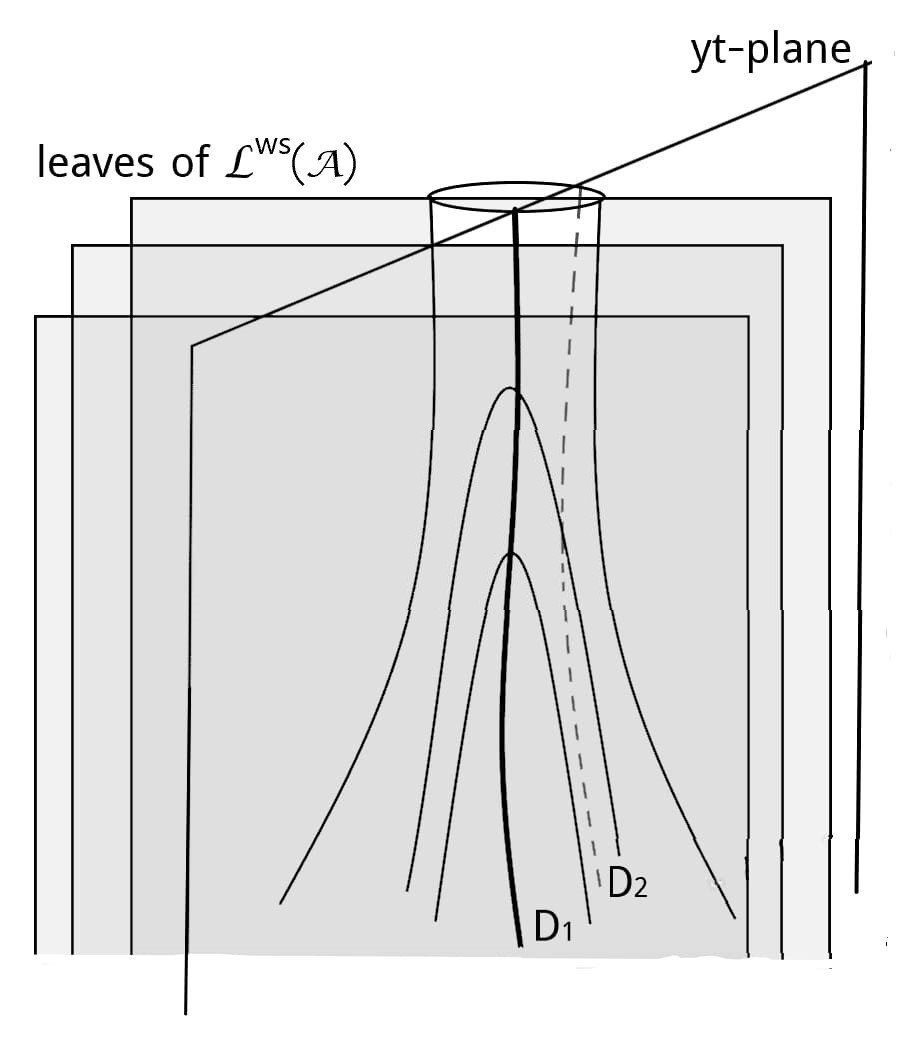}
  \caption{}
  \end{subfigure}
  \hskip 0.2in
   \begin{subfigure}[b]{0.3\linewidth}
    \includegraphics[width=\linewidth]{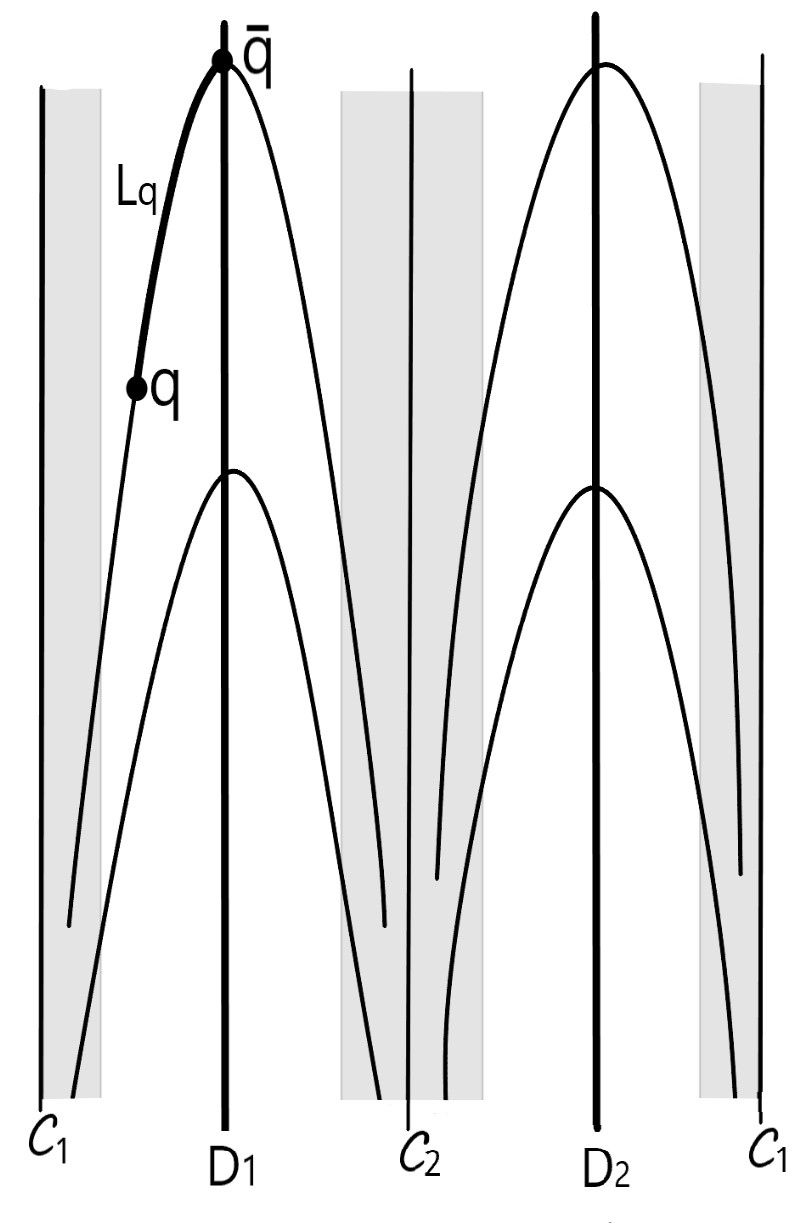}
    \caption{}
  \end{subfigure}
  \caption{(A) The foliation $\FF$ in a lift of
a torus; (B) Leaf of $\widetilde{\F}$
through $q$ intersect $D_1$ at $\bar{q}$. The shaded region represents the `bad region.}
  \label{fig4}
\end{figure}

Now we are ready to prove Proposition \ref{key}.
\vskip 0.2in
\noindent
\textbf{Proof of Proposition \ref{key}:} The semiflow $\psi^1_t$ has a hyperbolic set $\A$ in $M_1$, it is a two dimensional attractor. Consider the weak-stable foliation $\LE^{ws}$ of $\A$ associated with the semiflow
$\psi^1_t$ in $M_1$. Every point in
$M_1$ is in $\LE^{ws}$ as every point in $M_1$ is
attracted to $\A$. The foliation
 $\LE^{ws}$ intersects the boundary $\partial M_1$ in a one dimensional foliation which has two Reeb components \cite{FW80}, we denote this foliation on $\partial M_1$ by $\F$. As described before
the circular leaves of $\F$ are denoted by $\C_1$ and $\C_2$. They are the common boundary circles of the Reeb annuli of $\F$.
%\begin{figure}
 %   \centering
  %  \includegraphics[width=1\linewidth]{qg5.png}
   % \caption{Caption}
%    \label{fig4}
%\end{figure}

Suppose $\widehat{\mathcal{F}}$ is the lift of $\mathcal{F}$ on $\OT$. The intersection of the $yt$-plane and $\OT$ has two components, say $D_1$ and $D_2$ as shown in Figure \ref{fig4}. $\widehat{\mathcal{C}}_1$ and $\widehat{\mathcal{C}}_2$ are on two different sides of the $yt$-plane. As any leaf $L$ of $\widehat{\F}$ (except $\widehat{\mathcal{C}}_1$ and $\widehat{\mathcal{C}}_2$) is asymptotic to both $\widehat{\mathcal{C}}_2$ and $\widehat{\mathcal{C}}_2$, it must intersect the $yt$-plane in a single point either on $D_1$ or on $D_2$ as shown in Figure \ref{fig4}.

Consider $q\in\OT\setminus\{\widehat{\mathcal{C}}_1, \widehat{\mathcal{C}}_2\}$ and the leaf $L_q$ of $\widehat{\F}$ passing through $q$. As described above, $L_q$ intersects either $D_1$ or $D_2$ at a single point, w.l.o.g we assume that $L_q$ intersects $D_1$ and let $\q=D_1\cap L_q$ as shown in Figure \ref{fig4}. Note that, $q$ and $\q$ lie on the same leaf of the weak stable foliation of $\widehat{\A}$, hence there exists $s\in\RR$ such that $\widehat{\psi}^1_s(q)$ and $\q$ lie on the same strong stable leaf
of $\hat{\psi}^1_t$. It follows that
$$d_{\OG_1}(\widehat{\psi}^1_{t+s}(q),\widehat{\psi}^1_t(\q))\to 0\text{ as }t\to \infty $$

Let  $s_1 = d_{\OG_1}(q,\q)$.
Fix $\delta_1>0$, $\delta_1 << 1, \delta_1 < s_1$.
By the above limit we can find $s_2>0$ such that  $d_{\OG_1}(\widehat{\psi}^1_{t+s}(q),\widehat{\psi}^1_t(\q))\leq \delta_1$ whenever $t>s_2$.

For any $t'>s_2$, consider the four points $q,q_{t'}=\widehat{\psi}^1_{t'+s}(q),\q$ and $\q_{t'}=\widehat{\psi}^1_{t'}(\q)$. By the triangle inequality,
\begin{equation}\label{ieq}
    d_{\OG_1}(q,q_{t'})\leq d_{\OG_1}(q,\q)+d_{\OG_1}(\q,\q_{t'})+d_{\OG_1}(\q_{t'},q_{t'})
\end{equation}

Note that $d_{\OG_1}(q,q_{t'})=\ell_{\OG_1}(q_{t'})$ as $q$ and $q_{t'}$ lie on the same flow ray and by Lemma \ref{qgo}, flow segments are length minimizing in $\widehat{M}$ w.r.t. the path metric of $\OG_1$, similarly $d(\q,\q_{t'})=\ell_{\OG_1}(q_{t'})$. Moreover by our assumption $d_{\OG_1}(q,\q) = s_1$, \  $d_{\OG_1}(q_{t'},\q_{t'})<\delta_1$. Replacing in the above inequality \ref{ieq}, we get

\begin{equation}\label{inq2}
\begin{split}
          \ell_{\OG_1}(q_{t'}) & \leq  d_{\OG_1}(q,\q)+d_{\OG_1}(\q,\q_{t'})+d_{\OG_1}(\q_{t'},q_{t'}) \\
                     & \leq  s_1+\ell_{\OG_1}(\q_{t'})+\delta_1\\
                     & =  \ell_{\OG_1}(\q_{t'})+2s_1\text{ for any }t'>s_2
\end{split}
\end{equation}

As $\q\in\OT\cap\{yt\}-$plane, by Lemma \ref{unstable} we know there
are global $K, k > 0$, so that $\ell_{\OG_1}(\q_{t'})\leq K\D_{\OG_1}(\q_{t'},\OT)+k$. Applying it in \ref{inq2},
\begin{equation}\label{inq3}
    \ell_{\OG_1}(q_{t'})\leq \ell_{\OG_1}(\q_{t'})+2s_1 \leq  K\D_{\OG_1}(\q_{t'},\OT)+k+2s_1 \ \ \text{ when }t'>s_2
\end{equation}

Finally, suppose $a\in\OT$ is a point in $\OT$ that 
is  closest to $q_{t'}$, i,e $d_{\OG_1}(q_{t'},a)=\D_{\OG_1}(q_{t'},\OT)$. By the triangle inequality, we get, $d_{\OG_1}(\q_{t'},a)\leq d_{\OG_1}(q_{t},a)+d_{\OG_1}(q_{t'},\q_{t'})$. Moreover $\D_{\OG_1}(\q_{t'},\OT)\leq d_{\OG_1}(\q_{t'},a)$ as $a\in\OT$. Combining all these facts we conclude when $t'>s_2$,
\begin{equation}\label{inq4}
\begin{split}
    \D_{\OG_1}(\q_{t'},\OT) & \leq d_{\OG_1}(\q_{t'},a) \leq d_{\OG_1}(q_{t'},a)+d_{\OG_1}(\q_{t'},q_{t'}) \\
                   &= \D_{\OG_1}(q_{t'},\widehat{T}_0)+d(\q_{t'},q_{t'})\text{ as }d_{\OG_1}(q_{t'},a)=\D_{\OG_1}(q_{t'},\OT)\\
                   &= \D_{\OG_1}(q_{t'},\OT)+s_1\text{ as }d_{\OG_1}(\q_{t'},q_{t'}) = s_1 \text{ by assumption}
 \end{split}   
\end{equation}

Combining \ref{inq3} and \ref{inq4} we get,
\begin{equation}\label{inq5}
    \ell_{\OG_1}(q_{t'})\leq K\D_{\OG_1}(\q_{t'},\OT)+k+2s_1\leq K\D_{\OG_1}(q_{t'})+Ks_1+k+2s_1\text{ when }t'>s_2
\end{equation}

The above inequality proves Proposition \ref{key} for $t'>s_2$. If $t'\leq s_2$, then $\ell(q_{t'})=t'<s_2$. Adding the case when $t'\leq s_2$ in Inequality \ref{inq5} we conclude,
\begin{equation}\label{inq5.1}
    \ell_{\OG_1}(q_{t'})\leq K\D_{\OG_1}(q_{t'})+Ks_1+k+2s_1+s_2\text{ for all }t'\in[0,\infty)
\end{equation}

By renaming $C=K$ and $c=Ks_1+k+2S_1+s_2$, we rewrite the above Inequality \ref{inq5.1} as 
$$\boxed{\ell_{\OG_1}(q_{t'})\leq C\D_{\OG_1}(q_{t'})+c\text{ for all }t'\in[0,\infty)}$$
It completes the proof of Proposition \ref{key} for the point $q$.

\vskip .08in
We still need to argue why we can find constants $C>1$ and $c>0$ which work for all $q\in \widehat{T}_0\setminus N_{\delta}(\widehat{\C}_1\cup\widehat{\C}_2)$.  We need to find $s_1$ and $s_2$ big enough, such that Inequality \ref{inq5.1} holds for all flow rays intersecting $\widehat{T}_0\setminus N_{\delta}(\widehat{\C}_1\cup \widehat{\C}_2)$.

Consider a fundamental domain in $\OOT_0$ which quotient downs on 
the torus in $M_1$.  w.l.o.g. we assume that the fundamental domain is bounded by the planes $t=0$ and $t=1$ and we call it $\overline{T}_{0,1}$. Next we consider the compact set $\mathcal{S}$ which is the closure of the set $\widehat{T}_{0,1}\setminus N_{\delta}(\widehat{\C}_1\cup \widehat{\C}_2)$. 

As $\mathcal{S}$ is compact, it has finite radius w.r.t the metric $d_{\OG_1}$.
Recall the definition of $s_1$. It is $s_1 = d_{\OG_1}(q, \q)$.
Here $\q = (D_1 \cup D_2)  \cap L_q$, where $L_q$ is a leaf of $\widehat{\F}$
and $D_1 \cup D_2$ are the intersections of the $yt$-plane with 
$\OOT_0$.
Since $q$ is in a compact set $\mathcal{S}$ it follows that
$\q$ is also in a compact set. It follows that $s_1$ is globally
bounded.

Now we consider $s_2$. Given $q$, the value $s$ was defined
so that $\widehat{\psi}^1_s(q)$ and $\q$ lie in the same
strong stable leaf of $\widehat{\psi}^1_t$. Again since
$\mathcal{S}$ is compact, and $\q$ is in a compact set,
it follows that the values of $s$ as a function of $q$ are
also globally bounded in $\mathcal{S}$. Then there is a
global $s_2 > 0$ so that 
$d_{\OG_1}(\widehat{\psi}^1_{t+s}(q), \widehat{\psi}^1_t(\q)) 
< \delta_1$ for all $t > s_2$.

This shows that $s_1, s_2$ can be chosen globally bounded
for $q$ in the fundamental domain $\mathcal{S}$.
Since it is a fundamental domain, this shows that
$s_1, s_2$ can be chosen globally bounded.

This finishes the proof of Proposition \ref{key}.
\end{proof}

\begin{remark}
The reason behind considering the $\delta$-neighbourhood of $\C_1\cup\C_2$ is to use the compactness of the set $\mathcal{S}$, the compactness is used to determine the universality of the constants $s_1$ and $s_2$. $\widehat{T}_{0,1}\setminus{\widehat{\C}_1\cup\widehat{\C}_2}$ is not compact. 

%We can control the size of $\delta$ in the statement of the Lemma \ref{key}. 
\end{remark}

Now we extend Proposition \ref{key} in the universal cover of $\widetilde{M}_1$ w.r.t. the lifted Riemannian metric $\widetilde{G}_1$. Note that $\partial \TM_1$ is the lift of the torus $\partial M_1$, and it is a collection of infinitely many planes homeomorphic to $\RR^2$. We first re-define the notations as follows:

For a point $q\in\TM_1$, suppose the flow line through $q$ intersect a component of $\partial\TM$, say $\widetilde{T}_q$, then
\begin{itemize}
    \item $\D_{\TG_1}(q,\widetilde{T}_q)$  denotes the distance between $q$ and $\widetilde{T}_q$ w.r.t. the path metric induced by $\TG_1$ on $\TM_1$. 

    \item $\ell_{\TG_1}(q)$ is the length of the flow line segment connecting $q$ and $\widetilde{T}_q$ w.r.t. $\TG_1$.

\end{itemize}
Consider the lift of the neighbourhood $N_{\delta}(\C_1\cup\C_2)$ in $\TM_1$, we denote it as $N_{\delta}(\widetilde{\C}_1\cup\widetilde{\C}_2)$. We can restate the Lemma \ref{key} in $\TM_1$ as follows:

\begin{lemma}\label{key1}
There exists $C > 1$ and $c > 0$ satisfying the following:
Let $q\in \TM_1$ such that the flow ray through $q$  which intersects $\partial\TM_1$, say at the boundary component $\widetilde{T}_q$. If $\gamma_q$ intersects $\widetilde{T}_q$ on the region $\widetilde{T}_q\setminus N_{\delta}(\widetilde{\mathcal{C}}_1\cup\widetilde{\mathcal{C}}_2)$, 
then
$$\ell_{\TG_1}(q)\leq C\D_{\TG_1}(q, \widetilde{T}_q) +d$$
%Moreover the constants $C$ and $d$  do not depend on $q$ or $\overline{T}_q$. 
\end{lemma}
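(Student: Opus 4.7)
The plan is to derive Lemma \ref{key1} directly from Proposition \ref{key} by pulling back along the covering projection $\pi:\TM_1\to\OM_1$. Recall that $\OM_1\subset \RR^2\times\RR$ is only an intermediate cover of $M_1$, while $\TM_1$ is the universal cover; thus there is a covering map $\pi:\TM_1\to\OM_1$. Because $\TG_1$ and $\OG_1$ are both lifts of the fixed metric $G_1$ on $M_1$, the map $\pi$ is a local isometry between $(\TM_1,\TG_1)$ and $(\OM_1,\OG_1)$.

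Before the main estimate, I would record the following compatibilities, each of which is immediate from the fact that $\pi$ is a local isometry that intertwines all the geometric data. First, $\pi$ sends flow lines of $\widetilde\Psi^1_t$ to flow lines of $\widehat\psi^1_t$, and each boundary plane component $\widetilde T\subset\partial\TM_1$ maps onto a single boundary cylinder component of $\partial\OM_1$. Second, $N_\delta(\widetilde{\mathcal{C}}_1\cup\widetilde{\mathcal{C}}_2)$ is by definition the $\pi$-preimage of $N_\delta(\widehat{\mathcal{C}}_1\cup\widehat{\mathcal{C}}_2)$. Third, since a flow line not contained in the lifted attractor meets exactly one boundary component in each cover, if the flow line through $q$ meets $\widetilde T_q$ then the flow line through $\bar q:=\pi(q)$ meets exactly $\widehat T_{\bar q}:=\pi(\widetilde T_q)$.

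Now, given $q\in\TM_1$ whose forward flow ray crosses $\widetilde T_q\setminus N_\delta(\widetilde{\mathcal{C}}_1\cup\widetilde{\mathcal{C}}_2)$, set $\bar q=\pi(q)$. By the second and third observations above, the flow ray of $\bar q$ crosses $\widehat T_{\bar q}\setminus N_\delta(\widehat{\mathcal{C}}_1\cup\widehat{\mathcal{C}}_2)$ at the image of that same boundary point, so Proposition \ref{key} applies and yields constants $C>1$, $c>0$ with
$$\ell_{\OG_1}(\bar q)\leq C\,\D_{\OG_1}(\bar q,\widehat T_{\bar q})+c.$$
Since $\pi$ preserves the length of any flow segment, $\ell_{\TG_1}(q)=\ell_{\OG_1}(\bar q)$. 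Moreover, any rectifiable path $\sigma$ in $\TM_1$ from $q$ to a point of $\widetilde T_q$ projects under $\pi$ to a path in $\OM_1$ of the same length from $\bar q$ to a point of $\widehat T_{\bar q}$, so taking the infimum over $\sigma$ gives $\D_{\OG_1}(\bar q,\widehat T_{\bar q})\leq \D_{\TG_1}(q,\widetilde T_q)$. Combining these two facts,
$$\ell_{\TG_1}(q)\leq C\,\D_{\TG_1}(q,\widetilde T_q)+c,$$
which is the conclusion of Lemma \ref{key1} with $d:=c$.

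This argument is essentially formal, and I do not expect any genuine obstacle: all the hard analysis (the solv-geometry estimate, the choice of uniform constants via compactness of the fundamental domain) has already been absorbed into Proposition \ref{key}. The only points that require a quick sanity check are that $\widetilde{\mathcal{C}}_1, \widetilde{\mathcal{C}}_2$ and the boundary plane components of $\partial\TM_1$ are in fact the $\pi$-preimages of their $\OM_1$-counterparts, and that $\pi$ is a metric covering, but both are immediate from the construction of $\TG_1$ as a lift of $G_1$.
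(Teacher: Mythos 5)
Your proposal is correct and follows essentially the same route as the paper: project to the intermediate cover $\OM_1$, observe that flow-segment lengths are preserved while the distance to the corresponding boundary component can only decrease, and then invoke Proposition \ref{key}. The only cosmetic difference is that you take an infimum over all projected paths where the paper invokes the existence of a distance-realizing path, which changes nothing.
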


\begin{proof}
Note that $\OM_1$ is an intermediate cover of $M_1$, hence  $\TM_1$ is the universal cover of $\OM_1$. For any two points $b_1$ and $b_2$ in $\TM_1$ if $\bar{b}_1,\bar{b}_2 $ denotes the projection of $b_1, b_2$ in $\OM_1$, then
$$d_{\OG_1}(\bar{b}_1,\bar{b}_2)\leq d_{\TG_1}(b_1,b_2)$$
As $\TM_1$ is the universal cover of a compact manifold, 
%\footnote{S: Tricky to use the term complete, usually it means you can extend the geodesic flow for all reals, which is not the case here. I ommitted the "it is complete"}
for any point $q\in\TM_1$ there exists a point $q^*\in \widetilde{T}_q$ such that $\D_{\TG_1}(q,\widetilde{T}_0)=d_{\TG_1}(q,q^*)$.
In addition there is a path in $\TM_1$ from $q$ to $q^*$ which realizes
this distance. This implies that if $\bar{q}\in\OM_1$ is the projection of the point $q\in\TM_1$, then $\D_{\OG_1}(\bar{q},\widehat{T}_q)\leq \D_{\TG_1}(q,\widetilde{T}_q)$. Moreover, the $t$-directions are unchanged in $\OM_1$ and $\TM_1$, hence $\ell_{\OG_1}(\bar{q})=\ell_{\TG_1}(q)$.  Combining all the information we get,

$$\ell_{\TG_1}(q)=\ell_{\OG_1}(\bar{q})\leq C\D_{\OG_1}(\bar{q},\widehat{T}_{\bar{q}})+c\leq C\D_{\TG_1}(q,\widetilde{T}_q)+c$$
\end{proof}

Now we extend Lemma \ref{key1} to $\M_1= M_1\cup(\partial{M_1}\times [0,1])$. The manifold $\N$ is the union of the collection of $\{(\M_1, \Psi_t^1, \mathcal{G}_1);(\M_2, \Psi_t^2, \mathcal{G}_2);(\M_3, \Psi_t^3, \mathcal{G}_3);...;(\M_n, \Psi_t^n, \mathcal{G}_n)\}$ where the plugs intersect each other along their boundary components. As described in Section $3$, we extend the Riemannian metrics $G_i$ from $M_i$ to $\M_i$ and the extended metric is $\G_i$.
The induced Riemannian metric in $\N$ is $\G$. In particular

$$\G|_{M_i}=G_{i}|_{M_i}$$ 

In the rest of the article we consider the path metric $d_{\GG}$ induced from the Riemannian metric $\GG$ on the whole manifold $\NN$. For two points $p_1,p_2\in\NN$,
$$d_{\GG}(p_1,p_2)=\text{minimum}\{ \text{ length} \ \sigma|\ \sigma\text{ is a path connecting} p_1,p_2\in\NN\}$$

We extend the flow $\psi^{i}_t$ from $M_i$ to $\M_i=M_i\cup (M_i\times [0,1])$ as a product flow  (topologically) on $\partial M_1\times[0,1]$, and the extended flow is denoted by $\Psi^i_t$. It is clear that the foliation $\LE^{ws}(\A_i)\cap \partial M_i$ on $\partial M_i$ also extends to $\partial M_1 \times [0,1]$,
and hence to $\partial \M_i$. 
The neighbourhood $N_{\delta}(\widetilde{\mathcal{C}}_1\cup\widetilde{\mathcal{C}}_2)$ is also carried by the extended flow 
on the new boundary $\partial \MM_1$. 
In the universal cover $\NN$, we denote this new set by $N'_{\delta}(\widetilde{\mathcal{C}}_1\cup\widetilde{\mathcal{C}}_2)$.

%We again consider $\OM_1$ as above, $\OM_1$ is a cover of $M_1$ and we can extend $\OM_1$ to a cover of $\M_1$, say $\widehat{\N}_1$, by attaching collar neighbourhoods along $\partial\OM_1$. Consider the boundary component $\OT$ of $\OM_1$ as above. After attaching a collar-neighbourhood  $\OT \times[0,1]$ along $\OT$, we get a component of $\partial \widehat{\N}_1$, lets call it $\OTT_0$. The set $N_{\delta}(\widehat{\C_1\cup \C_2})$ is also isotoped on $\OTT_0$, and suppose the new set is $N'_{\delta}(\widehat{\C_1\cup\C_2})$

%In the rest of the article we will consider $\OMM_1$ as subset of $\ONN$ and we will consider the path metric $d_{\OGG}$ induced from $\OGG$.

%Suppose $b_1$ and $b_2$ are two points in $\MM_1$. On $\TM_1$ we can define two path metrices
%\begin{enumerate}
 %   \item $d_{\GG_1}(\ ,\ )$: the path metric induced by the $\GG_1$, i,e the restriction of $\GG$ %on $\MM_1$, $\GG|_{\MM_1}=\GG_1$
    
%    \item $d_{\GG}( \ , \ )$: the metric on $\MM_1$ as a subset of the metric space %$(\NN,d_{\GG}(\ ,\ ))$, where $d_{\GG}$ is the path metric induced by the Riemannian metric $\GG$ on $\NN$.   
%\end{enumerate}

%We need the the metric $d_{\GG}$ to prove the main theorem. The metrices $d_{\GG_1}$ and $d_{GG}$ are not necessarily same, but we can claim that the metrices are boundedly away from each other. We make it precise in the next remark:

%\begin{remark}
%Suppose $b_1$ and $b_2$ re two paths 
%\end{remark}

 As before we define, 
\begin{definition}
Suppose $q\in {\MM}_1$ such that the flow line $\gamma_q$ through $q$ intersects $\partial\MM_1$ at the boundary component $\widetilde{\T}_q$.  Then let
\begin{itemize}

    \item $\D_{\GG}(q,\widetilde{\T}_q)$ denote the distance between $q$ and $\widetilde{\T}_q$ w.r.t. the path metric induced by $\GG_1$. 
    \item $L_{\GG}(q) = L_{\GG_1}(q)$ denote the length of the flow line segment connecting $q$ and $\widetilde{\T}_q$ w.r.t. $\GG$. 
   
\end{itemize}
\end{definition}
\begin{lemma}\label{key2}
Let $q\in \widetilde{\M}_1$ such that the flow line through $q$ intersects $\partial\MM_1$ at the boundary component $\widetilde{\T}_q$. If $\gamma_q$ intersects $\widetilde{\T}_q$ on the region $\widetilde{\T}_q\setminus N'_{\delta}(\widetilde{\C}_1\cup \widetilde{\C}_2)$, then there exists $C_1>1$ and $c_1$ such that,
$$L_{\GG}(q)\leq C_1\D_{\GG}(q, \widetilde{\T}_q) +c_1$$
Moreover the constants $C_1$ and $c_1$  do not depends on $q$ or $\widetilde{\T}_q$. 
\end{lemma}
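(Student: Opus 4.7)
The strategy is to peel off the collar contribution and reduce to Lemma~\ref{key1}. By compactness of the downstairs collar $\partial M_1\times[0,1]$ together with the fact that the flow there is a topological product flow, there is a uniform constant $L_0>0$ bounding the length of any flow segment joining the outer face $\partial M_1\times\{1\}$ to the inner face $\partial M_1\times\{0\}$; this bound lifts to every slab $\widetilde{T}\times[0,1]$ in $\widetilde{\M}_1$ because the collar metric and flow are both products and the deck group acts by isometries. If $q$ already lies in a slab, then the whole flow segment from $q$ to $\widetilde{\T}_q$ stays in that slab and has length at most $L_0$, so the conclusion is trivial with any $C_1\ge 1$ and $c_1\ge L_0$.

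Suppose from now on that $q\in\widetilde{M}_1$. Tracing the flow line backward from $q$, it first crosses $\partial\widetilde{M}_1$ at a unique point $p$ lying on some component $\widetilde{T}_p$; the flow then traverses the slab $\widetilde{T}_p\times[0,1]$ as a product flow line and hits $\widetilde{\T}_q=\widetilde{T}_p\times\{1\}$ at $q^*$. Because $N'_\delta(\widetilde{\C}_1\cup\widetilde{\C}_2)$ is by construction the flow image of $N_\delta(\widetilde{\C}_1\cup\widetilde{\C}_2)$ across the slab, the hypothesis $q^*\notin N'_\delta$ is equivalent to $p\notin N_\delta(\widetilde{\C}_1\cup\widetilde{\C}_2)$. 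Hence Lemma~\ref{key1} applies and gives $\ell_{\TG_1}(q)\le C\,\D_{\TG_1}(q,\widetilde{T}_p)+c$; adding the slab contribution, which is at most $L_0$, yields
$$L_\GG(q)\ =\ \ell_{\TG_1}(q)+(\text{slab length})\ \le\ C\,\D_{\TG_1}(q,\widetilde{T}_p)+c+L_0.$$

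It remains to prove the comparison $\D_{\TG_1}(q,\widetilde{T}_p)\le\D_\GG(q,\widetilde{\T}_q)$. Define a retraction $r:\widetilde{\M}_1\to\widetilde{M}_1$ to be the identity on $\widetilde{M}_1$ and the projection onto the first factor on each slab $\widetilde{T}\times[0,1]$. Because the collar carries a product metric (and Section~\ref{S4} explicitly records that the first-factor projection is length-decreasing), $r$ is length-nonincreasing on paths. Since $\widetilde{\T}_q$ is the outer face of the unique slab attached to $\widetilde{T}_p$, every $q^*\in\widetilde{\T}_q$ satisfies $r(q^*)\in\widetilde{T}_p$; applying $r$ to any path from $q$ to a point of $\widetilde{\T}_q$ and taking infima gives the claimed inequality. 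Setting $C_1=C$ and $c_1=c+L_0$ completes the proof, and these constants are uniform because $L_0$ depends only on the fixed compact collar and the constants of Lemma~\ref{key1} are already uniform. The main subtlety is the geometric claim that the slab/outer-face correspondence is one-to-one in the universal cover, so that $r$ genuinely sends $\widetilde{\T}_q$ into $\widetilde{T}_p$; this is where $\pi_1$-injectivity of $\partial M_1$ in $\M_1$ is used, guaranteeing that each lifted slab sits against a single component of $\partial\widetilde{M}_1$ and hence no path from $q$ can sneak to $\widetilde{\T}_q$ through a different boundary component.
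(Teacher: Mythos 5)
Your proof is correct and follows essentially the same route as the paper's: reduce to $q\in\widetilde{M}_1$, bound the collar contribution by a uniform constant via compactness of $\partial M_1\times[0,1]$, apply Lemma~\ref{key1} to the portion inside $\widetilde{M}_1$, and compare $\D_{\TG_1}(q,\widetilde{T}_p)$ with $\D_{\GG}(q,\widetilde{\T}_q)$. Your explicit use of the length-nonincreasing retraction onto $\widetilde{M}_1$ to justify that comparison is in fact a cleaner justification than the paper's one-line assertion, and it is the same device the paper already uses in Lemma~\ref{qgo1}.
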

\begin{proof}
Again we will prove the lemma only for a component of $\partial \MM_1$, which
we will denote by $\widetilde{\T}_0$. The same result holds for all other components of $\partial\widetilde{\M}_i$.

Since the flow is a product in $\partial M_1 \times [0,1]$ then up
to changing $c_1$ to a bigger constant, we can assume
that $q \in \widetilde{M}_1$.

If $\gamma$ intersects $\widetilde{\T}_0$ then it also intersects 
a component $\widetilde{T}_0$ of $\partial \widetilde{M}_1$, because the flow $\Psi^1_t$ is a product in $\partial M_1 \times [0,1]$. As the distances are measured as minimum lengths of paths for both $\TG_1$ and $\GG$ and $\GG|_{\TM_1}=\TG_1$, we conclude
$$\D_{\TG_1}(q,\widetilde{T}_0) <  \D_{\GG}(q,\widetilde{\T}_0)$$

\noindent
Notice we are assuming that $q$ is in $\widetilde M_1$.
By the compactness of $\partial M_1\times [0,1]$, we can find $\epsilon>0$ (same as in Lemma \ref{qgo1}) such that for all $\gamma$ which intersects $\widetilde{\T}_0$,
\  $\text{length}_{\GG}(\gamma\cap(\widetilde{\OT}\times[0,1]))\leq \epsilon$.

Fix a flow ray $\gamma$, suppose $\gamma$ intersects $\widetilde{T}_0$ at $q_1$ and  $\widetilde{\T}_0$ at $q_2$. Then for any $p\in\gamma$, as $\TG_1|_{\MM_1}=\GG|_{\MM_1}$,
$$L_{\GG}(q)=\ell_{\TG_1}(q)+\text{length}_{\GG}(\gamma_{[q_1,q_2]})$$

By Lemma \ref{key1}, we know $\ell_{\TG_1}(q)\leq K\D_{\GG_1}(q,\widetilde{T}_0)+k$ and we have $\D_{\TG_1}(q,\widetilde{\T}_0)<\D_{\GG}(q,\widetilde{T}_0)$. By the definition of $\epsilon$, 
$\text{length}_{\GG}(\gamma_{[q_1,q_2]})\leq \epsilon$. Hence we conclude
\begin{equation}
    \begin{split}
        L_{\GG}(q)&=\ell_{\TG_1}(q)+\text{length}_{\GG}(\gamma_{[q_1,q_2]})\\
        & \leq \ell_{\TG_1}(q)+\epsilon \text{ as length}_{\GG}(\gamma_{[q_1,q_2]})\leq \epsilon\\
        & \leq C\D_{\TG_1}(q,\widetilde{T}_0)+c+\epsilon\text{ by Lemma \ref{key1} } \\
        & \leq C\D_{\GG}(q,\widetilde{\T}_0)+c+\epsilon\text{ as }\D_{\TG_1}(q,\widetilde{T}_0)\leq \D_{\GG}(q,\widetilde{\T}_0))
    \end{split}
\end{equation}

As all the constants $C,c$ and $\epsilon$ are independent of the flow line, we conclude that, for all $q$ in $\gamma$ such that $\gamma$ intersects $\widetilde{\T}_q\setminus N'_{\delta}(\widetilde{\C}_1\cup\widetilde{ \C}_2)$,
$$L_{\GG}(q)\leq C_1\D_{\GG}(q,\widetilde{\T}_q)+c_1$$
where $C_1=C$ and $c_1=d+\epsilon$ as defined above. 
\end{proof}

\begin{remark}
Lemma \ref{key1} says that every flow line which intersects the boundary components of $\widetilde{M}_1$ outside the `\textit{bad region}' goes away from the boundary component at a uniformly efficient rate
as $t\to\infty$. 

All other hyperbolic plugs, irrespective of `attracting' or `repelling', have the same type of property that if a flow ray intersects a boundary component outside the 'bad region', it goes 'away' at a uniformly
efficient rate from the boundary component when $t\to\infty$ (in case of attracting plugs) or $t\to-\infty$ (in case of repelling plugs). Moreover by taking the maximum over all the constants, we can fix global additive and multiplicative constants which work for all of the hyperbolic plugs. 
\end{remark}

We conclude this section with two remarks on the separating tori at the boundaries of the hyperbolic plugs, these tori play an important role in this article. 

\begin{remark}\label{tori}
\begin{enumerate}
    \item As remarked before, the boundary tori are \textit{incompressible}, i,e they are two-sided and injectively included in the fundamental group $\pi_1(\N)$.
    
    \item If $\widetilde{\T}$ is a component of the lift of some $\partial \MM_i$, then  by Theorem 1.1 of \cite{KL98}, see also Section 3.1 of 
\cite{Ngu19}, the following happens:
 $\widetilde{\T}$ is \textit{quasi-isometrically embeded} in the universal cover $\NN$. We make it more precise as follows: Consider the lift of a separating torus, say $\widetilde{T}$. By restricting the Riemannian metric on $\widetilde{\T}$, we can consider the path metric on $\widetilde{\T}$ induced by the restriction, we call it $d_{\widetilde{\T}}$. Then there exists $k_0, k_1$ such that the inclusion map $i:(\widetilde{\T},d_{\widetilde{\T}})\rightarrow (\NN,d_{\GG})$ is a $(k_0,k_1)$-quasi-isometric embedding. 
We can choose $k_0, k_1$ so that it works for any such 
$\widetilde{\T}$.

\end{enumerate}
\end{remark}

\subsection{Quasigeodesic behavior in the whole manifold:}\label{S5.2}

Now we are ready to prove the main theorem, that is, the flow lines of $\widetilde{\Psi}_t$ are uniform quasigeodesics in $\NN$ w.r.t. $d_{\GG}$, the path metric induced by $\GG$. As before, we assume that $\N$ is made of the collection of hyperbolic plugs $\{\M_1,\M_2,...,\M_n\}$. Each $\MM_i$ is a manifold with boundary such that $\partial \MM_i$ is a collection of separating planes homeomorphic to $\RR^2$ and properly embeded into $\NN$.

We first prove that if a flow line or flow ray is fully contained in the universal lift of a single plug $\M_i$ then it is a quasigeodesic. 

\begin{lemma}\label{qgr}
There exits $a_3>1$ and $a_4>0$ such that if $\gamma$ is a flow ray or flow line fully contained in a single hyperbolic plug $\MM_i$ then it is a $(a_3,a_4)$-quasigeodesic w.r.t. the metric $d_{\widetilde{\G}}$.
\end{lemma}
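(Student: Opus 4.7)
The plan is to reduce the lemma to showing that each lift $\MM_i$ is quasi-isometrically embedded in $\NN$, and then combine this embedding with Lemma \ref{qgo1}.

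Explicitly, I would first establish the following claim: there exist constants $A > 1$ and $B > 0$, independent of $i$, such that for all $p_1, p_2 \in \MM_i$,
\[
d_{\GG_i}(p_1,p_2) \ \leq \ A\, d_{\GG}(p_1,p_2) + B.
\]
Granting this claim, for any flow ray or flow line $\gamma \subset \MM_i$ and any two points $p_1, p_2 \in \gamma$, Lemma \ref{qgo1} (together with $\GG|_{\MM_i} = \GG_i$) yields
\[
\text{length}_{\GG}(\gamma_{[p_1,p_2]}) \ = \ \text{length}_{\GG_i}(\gamma_{[p_1,p_2]}) \ \leq \ d_{\GG_i}(p_1,p_2) + \epsilon \ \leq \ A\, d_{\GG}(p_1,p_2) + (B+\epsilon),
\]
which is the desired $(a_3,a_4)$-quasigeodesic property with $a_3 = A$ and $a_4 = B + \epsilon$. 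Since there are only finitely many plugs, the constants may be chosen uniformly.

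To prove the claim I would exploit the tree-like structure of the lifted JSJ decomposition of $\N$ in $\NN$: the universal cover is tiled by the lifts $\{\MM_j\}$ glued along lifts of boundary tori, the adjacency graph of these lifts is a tree, and each boundary plane $\widetilde{\T} \subset \partial \MM_i$ separates $\NN$ into two pieces, one containing the interior of $\MM_i$. Given $p_1, p_2 \in \MM_i$ and a near-minimizing path $\tau$ in $\NN$ between them, I would decompose $\tau$ into maximal arcs inside $\MM_i$ alternating with excursions outside. By the separation property, each excursion has both endpoints on the \emph{same} boundary plane $\widetilde{\T}_j \subset \partial \MM_i$. For each such excursion with endpoints $q_1, q_2 \in \widetilde{\T}_j$, Remark \ref{tori} yields
\[
d_{\widetilde{\T}_j}(q_1,q_2) \ \leq \ k_0\, d_{\GG}(q_1,q_2) + k_1 \ \leq \ k_0\, \text{length}_{\GG}(\tau|_{[q_1,q_2]}) + k_1,
\]
so one can replace each excursion by a near-minimizing path in $\widetilde{\T}_j \subset \MM_i$ of at most this length, producing a path from $p_1$ to $p_2$ inside $\MM_i$.

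The main obstacle is to control the total number of excursions, since each contributes an additive $k_1$. To handle this I would argue that, for a near-minimizing $\tau$, each excursion into an adjacent plug must have length bounded below by a uniform positive constant $\delta_0$: otherwise, using the CAT(0) structure on $\NN$ provided by Leeb's theorem (referenced in the remark after Theorem \ref{main0}) together with the quasi-convexity of the separating planes, the excursion could be straightened into a strictly shorter path lying along $\widetilde{\T}_j$, contradicting near-minimality. Consequently the number of excursions is at most $\text{length}_{\GG}(\tau) / \delta_0$, giving the desired linear bound and establishing the quasi-isometric embedding with uniform constants. This step is essentially the Kapovich--Leeb combination principle for graphs of quasi-isometrically embedded spaces, considerably simplified by the tree-like adjacency pattern of plugs in $\NN$.
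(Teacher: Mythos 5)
Your overall strategy is the same as the paper's: reduce the lemma to showing that each lift $\MM_i$ is quasi-isometrically embedded in $(\NN,d_{\GG})$ by taking a near-minimizing path between $p_1,p_2\in\MM_i$, cutting it into excursions off $\MM_i$ (each with both endpoints on a single separating plane $\widetilde{\T}_j$, by the separation property), replacing each excursion by a path in $\widetilde{\T}_j$ via Remark \ref{tori}, and then invoking Lemma \ref{qgo1}. The gap is in your control of the number of excursions. You claim each excursion of a near-minimizer has length at least a uniform $\delta_0>0$, since otherwise it "could be straightened into a strictly shorter path along $\widetilde{\T}_j$" using the CAT(0) structure and quasi-convexity of the planes. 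This step fails. First, the CAT(0) metric furnished by Leeb's theorem (see the remark after Theorem \ref{main0}) is a \emph{different} Riemannian metric from $\GG$, only quasi-isometric to it, so near-minimality with respect to $d_{\GG}$ gives no leverage there; and the planes are only quasi-isometrically embedded with an additive constant $k_1>0$, so for an excursion of length $\ell\ll k_1$ the replacement path in $\widetilde{\T}_j$ provided by Remark \ref{tori} has length at most $k_0\ell+k_1$, which is \emph{not} shorter than $\ell$ — no contradiction with near-minimality arises. Second, even with genuine convexity hypotheses one should not expect such a $\delta_0$: if the separating surface is concave toward $\MM_i$ somewhere, honestly minimizing paths between points of $\MM_i$ can cut corners with arbitrarily short excursions to the other side (as chords of a sphere in Euclidean space do), so a uniform positive lower bound on excursion lengths is false in general.

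The fix — and what the paper actually does — needs no CAT(0) geometry. The metric $\GG$ is built so that each collar $\partial M_j\times[0,1]$ carries a product metric for which projection onto the boundary factor is length-nonincreasing. Hence any excursion short enough (length less than a fixed $a_6$) to remain in the collar of the adjacent plug can be pushed into $\widetilde{\T}_j$ at the cost of a bounded multiplicative factor and \emph{no} additive constant; only excursions of length at least $a_6$ incur the additive $k_1$ from Remark \ref{tori}, and their number is at most $\mathrm{length}_{\GG}(\sigma)/a_6$, so the accumulated additive cost is absorbed into the multiplicative constant. With that replacement your argument closes and coincides with the paper's proof.
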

\begin{proof} 
As before, w.l.o.g. we prove the result only on $\MM_1$.

In Lemma \ref{qgo1}, we have proved that the flow lines or flow rays are quasigeodesic w.r.t. $d_{\GG_1}$, the path metric of the restriction of $\GG$ on $\MM_1$. In this lemma we need to extend the result on the whole manifold $\NN$ w.r.t. $d_{\GG}$. 

Consider two points $p_1$ and $p_2$ on a flow ray or flow line $\gamma\subset\MM_1$. Suppose $\sigma:[0,1]\rightarrow \NN$ is a minimal path w.r.t. $\GG$ in $\NN$ connecting $p_1$ and $p_2$. 
In other words $length(\sigma) = d_{\GG}(p_1,p_2)$.
We argue that the length of $\sigma$ can be approximated by a curve fully contained in $\MM_1$, without
too much increase in length.
 Suppose $\sigma$ exits $\MM_1$ through a boundary component $\widetilde{\T}$ at $0<v_1<1$, i,e $\sigma(v_1)\in \widetilde{\T}$. As $\widetilde{\T}$ is a separating plane in $\NN$, $\sigma$ must re-enter $\MM_1$ at some 
first $v_2$ with $0<v_1<v_2<1$.   

By Remark \ref{tori}, $\widetilde{\T}$ is $(k_0,k_1)$-quasi-isometrically embeded in $\NN$, hence we can find a path $\bar{\sigma}:[v_1,v_2]\rightarrow\widetilde{\T}$ such that 
$$\frac{1}{k_0}\text{length}_{\GG}(\bar{\sigma}_{[v_1,v_2]})-k_1\leq \text{length}_{\GG}(\sigma|_{[v_1,v_2]})\leq k_0\ \text{length}_{\GG}(\bar{\sigma}_{[v_1,v_2]})+k_1$$

In the definition of $\sigma$, we can replace $\sigma|_{[v_1,v_2]}$ with $\bar{\sigma}|_{[v_1,v_2]}$. 
Now fix $a_6 > 0, a_6 << 1$ so that any segment in the
image of $\sigma$ with endpoints in $\partial \MM_1$ and 
interior outside $\MM_1$, and length $< a_6$ can be pushed
into $\partial \MM_1$ to a segment of length at most $2 a_6$.
On $[0,1]$, there can exist only finitely many closed intervals on which $\sigma$ goes out of $\MM_1$ and with length $> a_6$. 
Also replace all of these intervals with minimal curve on respective boundary components as described before, using the quasi-isometry
constants $k_0, k_1$. Choosing $k_0 > 2$, then 
 we get a curve $\sigma':[0,1]\rightarrow\MM_1\subset\NN$ with 
same endpoints as $\sigma$, and such that 
$$\frac{1}{k_0}\text{length}_{\GG}(\sigma')-k_1\leq \text{length}_{\GG}(\sigma)\leq k_0\ \text{length}_{\GG}(\sigma')+k_1$$

Note that $\GG_1=\GG|_{\MM_1}$. In particular, as $\sigma'$ is a path contained in $\MM_1$ and connecting $p_1$ and $p_2$, hence 
$$d_{\GG_1}(p_1,p_2)\leq\text{length}_{\GG_1}(\sigma')=\text{length}_{\GG}(\sigma')$$

Hence combining Lemma \ref{qgo1} and the above inequality,

\begin{equation*}
    \begin{split}
        \text{length}_{\GG_1}(\gamma_{[p_1,p_2]})&\leq d_{\GG_1}(p_1,p_2)+\epsilon\\
                             & \leq \text{length}_{\GG_1}(\sigma')+\epsilon\\
                             & = \text{length}_{\GG}(\sigma')+\epsilon\\
                             & \leq k_0 \text{length}_{\GG}(\sigma)+ k_0 k_1+\epsilon\\
                             & = k_0 d_{\GG}(p_1,p_2)+k_0k_1+\epsilon
    \end{split}
\end{equation*}

As $\GG_1=\GG|_{\MM_1}$, $\text{length}_{\GG_1}\gamma_{[p_1,p_2]}=\text{length}_{\GG_1}\gamma_{[p_1,p_2]}$; by renaming $a_3=k_0$ and $a_4=k_0 k_1+\epsilon$, we conclude the proof of the lemma.
\end{proof}

The following is the main result of this article:

\begin{theorem} \label{main}
There exists $C_0>1$ and $c_0>0$ such that each flow line of $\widetilde{\Psi}_t$ in $\NN$ is a $(C_0,c_0)$-quasigeodesic w.r.t. the metric $\GG$. 
\end{theorem}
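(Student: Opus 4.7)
The plan is to combine the efficient-escape estimate (Lemma \ref{key2}) with the within-plug quasigeodesic bound (Lemma \ref{qgr}), exploiting the structural fact that the attractor/repeller gluing in a GFW flow forces each flow line of $\widetilde{\Psi}_t$ in $\NN$ to cross at most one lifted gluing torus. An orbit entering an attracting plug stays in it forever in forward time, and dually for backward time in a repelling plug, so any orbit of $\widetilde{\Psi}_t$ decomposes as at most a backward flow ray in a single lifted repelling plug $\widetilde{\M}_{\mathrm{rep}}$ and a forward flow ray in a single lifted attracting plug $\widetilde{\M}_{\mathrm{att}}$, joined at a point $r$ on a lifted gluing torus $\widetilde{\T}$.

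Given $p, q$ on a flow line $\gamma$, one first disposes of the trivial case where both lie in a single lifted plug using Lemma \ref{qgr}. Otherwise take $p \in \widetilde{\M}_{\mathrm{rep}}$ and $q \in \widetilde{\M}_{\mathrm{att}}$ separated by $\widetilde{\T}$, and write $L_1 = \mathrm{length}_{\GG}(\gamma_{[p,r]})$, $L_2 = \mathrm{length}_{\GG}(\gamma_{[r,q]})$. Since $\widetilde{\T}$ is a properly embedded separating plane in $\NN$ (Remark \ref{tori}), every path between $p$ and $q$ crosses $\widetilde{\T}$, giving
\[
\D_{\GG}(p, \widetilde{\T}),\ \D_{\GG}(q, \widetilde{\T})\ \leq\ d_{\GG}(p, q).
\]
Now split into cases based on whether $r$ lies in the attractor-side bad region $N'_{\delta}(\widetilde{\C}_1 \cup \widetilde{\C}_2)$ and/or in the analogous repeller-side bad region (coming from the time-reversed Proposition \ref{key} indicated in the remark following Lemma \ref{key2}). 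If $r$ avoids the attractor bad region, Lemma \ref{key2} yields $L_2 \leq C_1 d_{\GG}(p,q) + c_1$; symmetrically one bounds $L_1$ when $r$ avoids the repeller bad region. When both hold, summing gives the desired quasigeodesic bound. When only one holds, say the repeller-side estimate, invoke Lemma \ref{qgr} on the forward ray from $r$ inside $\widetilde{\M}_{\mathrm{att}}$ to obtain $L_2 \leq a_3 d_{\GG}(r,q) + a_4 \leq a_3(d_{\GG}(p,q) + L_1) + a_4$, then substitute the repeller bound for $L_1$ to produce uniform constants $(C_0, c_0)$ expressible in $C_1, c_1, a_3, a_4$.

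The main obstacle is the doubly-bad case where $r$ lies in both bad regions simultaneously; this is exactly the scenario flagged in the introduction, where an orbit might track $\widetilde{\T}$ on both sides and inflate $L_1 + L_2$ relative to $d_{\GG}(p,q)$. The rescue is the Anosov gluing condition itself, which forces the attractor's stable boundary foliation $\mathcal{F}^{\mathrm{att}}$ and the repeller's unstable boundary foliation $\mathcal{F}^{\mathrm{rep}}$ on $\T$ to be topologically transverse; their Reeb boundary circles thus cross transversely on $\T$, and lifting to $\widetilde{\T} \cong \RR^2$ exhibits $\widetilde{\C}^{\mathrm{att}}$ and $\widetilde{\C}^{\mathrm{rep}}$ as two transverse line systems. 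For $\delta$ sufficiently small, the intersection of the two $\delta$-neighborhoods is, within a fundamental domain for $\T$, confined to a compact neighborhood of finitely many transverse crossings. A compactness argument on this fundamental domain $\mathcal{S}$, in the spirit of the universal-constant extraction at the end of Proposition \ref{key}, then produces uniform $C_0, c_0$ that control $L_1 + L_2$ in this exceptional case as well, completing the uniform $(C_0, c_0)$-quasigeodesic bound for $\widetilde{\Psi}_t$.
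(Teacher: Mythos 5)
Your overall architecture coincides with the paper's: the same two-type decomposition of orbits, Lemma \ref{qgr} for orbits confined to one lifted plug, and, for crossing orbits, the combination of the escape estimate (Lemma \ref{key2}) on at least one side of the gluing torus with the within-plug quasigeodesic bound on the other, together with the observation that $\D_{\GG}(p,\widetilde{\T}),\D_{\GG}(q,\widetilde{\T})\leq d_{\GG}(p,q)$ because $\widetilde{\T}$ separates. Your ``one good side'' computation is a streamlined version of the paper's Case I/Case II analysis and is fine. The divergence is entirely in how the doubly-bad crossing point is handled. The paper's Claim \ref{c1} asserts that $\delta$ and $\delta'$ can be chosen so small that the two bad regions, transported to the same torus by $\widetilde{\Omega}$, are disjoint, so the doubly-bad case simply never arises. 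You instead take it for granted that the two bad regions must overlap near the transverse crossings of the Reeb boundary circles --- which is indeed what happens for gluings such as the original $\pi/2$-rotation, where the closed leaves of the two boundary foliations have different slopes and hence intersect, so that no shrinking of $\delta,\delta'$ separates their neighborhoods --- and you route this case through a separate argument.

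That separate argument is where the genuine gap sits. ``Compactness of the overlap within a fundamental domain'' does not by itself produce the needed estimate: what is required at a doubly-bad crossing point $r$ is an escape inequality $L_{\GG}(\cdot)\leq C\,\D_{\GG}(\cdot,\widetilde{\T})+c$ along at least one of the two rays, and this is a statement about the entire non-compact forward or backward ray, not about a compact set of initial conditions. Worse, the mechanism of Proposition \ref{key} degenerates exactly there: its constants come from $s_1=d_{\OG_1}(q,\q)$ and the associated $s_2$, where $\q$ is the point at which the stable leaf through $q$ meets the $yt$-plane, and as $q$ approaches $\CC_1\cup\CC_2$ that intersection point escapes to infinity along the leaf, so $s_1\to\infty$ and no uniform constant survives; this is precisely why the $\delta$-neighbourhood was excised in the first place. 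To close the doubly-bad case you would need a genuinely new input --- for instance, that orbits entering near $\C_1\cup\C_2$ shadow the lifted periodic orbits $\widetilde{C}_{p_i}$, together with a solv-geometry computation showing that the distance from $\widetilde{C}_{p_i}(t)$ to $\widetilde{\T}$ grows linearly in $t$, which would extend the escape estimate over the bad region with different constants. As written, your final paragraph asserts the conclusion rather than proving it.
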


\begin{proof}

There exists exactly two types of flow lines $\gamma$ of $\widetilde{\Phi}_t$ in $\NN$: 
\begin{enumerate}
\item either $\gamma$ is contained in one of $\MM_i$s. In this case $\gamma$ is contained in the attractor (or repeller) inside $\MM_i$. 
\item or $\gamma$ intersects the boundary of one of $\MM_i$s. As every boundary component is shared by exactly two plugs, $\gamma$ intersects two adjacent plugs, say $\MM_i$ and $\MM_j$. In this case $\gamma$ is subdivided in two rays and each of $\MM_i$ and $\MM_j$ contains exactly one subray, say $\gamma^+$ and $\gamma^-$.  
\end{enumerate}

By Lemma \ref{qgr} we know that each flow lines which are entirely contained in one of $\MM_i$s are uniform quasigeodesics, i,e all the first type of flow lines in the above list are $(a_3,a_4)$-quasigeodesics.

If $\gamma$ intersects a common boundary component of $\MM_i$ and $\MM_j$, Lemma \ref{qgr} says that both of the forward subray $\gamma^+$ and the backward subray $\gamma^-$ are quasigeodesics. But concatenation of two quasigeodesic is not necessarily a geodesic and that is the main obstacle in this proof. Next we show that, in our case concatenation of two quasigeodesic flow ray is a quasigeodesic and the key ingredient of the proof is Lemma \ref{key2}. 

\begin{lemma}\label{lf}
Every flow line which intersects a boundary 
component of a hyperbolic plug is a quasigeodesic. 
\end{lemma}

\begin{proof}
Up to reindexing the $\MM_i$, we can assume that 
there are lifts $\MM_1, \MM_2$ of $\M_1, \M_2$ respectively, so that
$\gamma$ intersects $\partial \MM_1$ at the boundary component $\widetilde{\T}_0$ and suppose $\widetilde{\T}_0$ is a common boundary component of $\MM_1$ and $\MM_2$. We can assume that $\MM_1$ is an attracting plug, so it contains the forward ray $\gamma^+$ and $\MM_2$ must be a repelling plug and contains the backward flow ray $\gamma^-$. 

\begin{claim}\label{c1}
At least one of $\gamma^+$ or $\gamma^-$ intersects $\widetilde{\T}_0$ outside the `bad region'.
More precisely, at least one of the following is true:
\begin{itemize}
    \item For all $q\in\gamma^+\subset\MM_1$
    $$L_{\GG}(q)\leq C_1\D_{\GG}(q,\widetilde{\T}_0)+c_1 $$

    \item Or, $\gamma^-$ satisfies the same property, i,e. 
for all $q\in\gamma^-\subset\MM_2$,
    $$L_{\GG}(q)\leq C_1\D_{\GG}(q,\widetilde{\T}_0)+c_1 $$
    
\end{itemize}
It is possible that both of the flow rays $\gamma^+$ and $\gamma^-$ satisfy 
the property.
\end{claim}
\begin{proof}
 We can view $\widetilde{\T}_0$ as a boundary component of $\MM_1$, and
we denote it by $\T_{\MM_1}$ to emphasize this.
In the same way we can view $\widetilde{\T}_0$ as
a boundary component of $\MM_2$, which we denote by $\T_{\MM_2}$.
These boundary components are attached to each other by a map
 $\widetilde{\Omega}:\widetilde{\T}_{\MM_1}\rightarrow\widetilde{\T}_{\MM_2}$. 

Consider the neighbourhood $N'_{\delta}(\widetilde{\C}_1\cup\widetilde{\C}_2)$ on $\widetilde{\T}_{\MM_1}$ as described in Lemma \ref{key2}, lets rename it $\mathfrak{N}_{\delta}$, this is the `bad region' on $\widetilde{\T}_{\MM_1}$. Similarly there is another bad region, say $\mathfrak{N}_{\delta'}$ on $\widetilde{\T}_{\MM_2}$.

Note that we can choose $\delta$ (and $\delta'$) in the Lemma \ref{key} small enough, so that $\widetilde{\Omega}(\mathfrak{N}_{\delta})\cap \mathfrak{N}_{\delta'}=\emptyset$. In other words, every flow line $\gamma$ which intersects $\widetilde{\T}_0=\widetilde{\T}_{\MM_1}\sqcup\widetilde{\T}_{\MM_2}/\sim$ intersects at least one of the regions $\widetilde{\T}_{\MM_1}\setminus \mathfrak{N}_{\delta}$ or $\widetilde{\T}_{\MM_2}\setminus \mathfrak{N}_{\delta '}$. If $\gamma$ intersects $\widetilde{\T}_{\MM_1}\setminus \mathfrak{N}_{\delta}$ then the subray $\gamma^+$ satisfies the claim by Lemma \ref{key2}, and similarly if $\gamma$ intersects $\widetilde{\T}_{\MM_2}\setminus \mathfrak{N}_{\delta'}$ the subray $\gamma^-$ satisfies a similar property. 
\end{proof}

%Lets assume $\gamma^+$ satisfies the claim. If $q$ and $q'$ are two points on the flow ray $\gamma^+$ and w.l.o.g assume $\gamma_{[q,q_1]}=\gamma_{[q,q']}*\gamma_{[q',q_1]}$. Then
%\begin{equation}\label{eq7}
 %   d(q,q')\leq\length(\gamma_{[q,q']})\leq L'(q)\leq K\D(q,\T)+d
%\end{equation}

%Consider the length minimizing path $\sigma$ connection $q$ and $q'$. If both $q, q'\in \OM$, then by Lemma $\qgo$ the length minimizing path is the flow segment $\gamma_{[q,q']}$, hence length($\gamma_{[q,q']}$)=$d(q,q')$. Otherwise if $\sigma$ exists $\OMM_1$ then it must intersect $\T$, and that implies $\D(q,\T)\leq d(q,q')$. Replacing it in \ref{eq7}, we conclude
%$$\text{length}(\gamma_{[q,q']}))\leq Kd(q,q')+d\text{ for all }q,q'\in\gamma^+$$
%Hence in $\gamma^+$ satisfies the claim, then it is a quasigeodesic in $\OMM$.

\noindent
{\bf {Continuation of the proof of Lemma \ref{lf}.}}

As before, let $\gamma^+\subset \MM_1$ and $\gamma^-\subset \MM_2$.
We assume  w.l.o.g, that $\gamma^+\subset\MM_1$ satisfies Claim \ref{c1}.

Take two points $q,q'\in\gamma$. As each of the subrays of $\gamma^+$ and $\gamma^-$ are uniform quasigeodesics by Lemma \ref{qgr}, we can conclude that the flow segment joining $q$ and $q'$ is a quasigeodesic if either $q,q'\in\gamma^+$ or $\q,q'\in\gamma^-$. Hence we assume $q\in\gamma^+$ and $q'\in\gamma^-$. 

As $\widetilde{\T}_0$ is a separating plane on $\NN$, $\gamma$ must intersect $\widetilde{\T}_0$. We can conclude that that for fixed $q' \in \gamma^-$:
$$\D_{\OGG}(q,\OTT_0)<d_{\OGG}(q,q')\text{ for all }q\in\gamma^+$$

 Let $\gamma$ intersect $\widetilde{\T}_0$ at $q_1$. 
Now fix $q \in \gamma^+$. We break the flow segment $\gamma_{[q,q']}$ as $\gamma_{[q,q']}=\gamma_{[q,q_1]}*\gamma_{[q_1,q']}$.  There are two possible cases:

\vskip .1in
\noindent
\textbf{Case I:} Suppose $\text{length}_{\GG}(\gamma_{[q,q_1]})\geq\text{length}_{\GG}(\gamma_{[q_1,q']})$.

Note that $L_{\GG}(q)=\text{length}_{\GG}(\gamma_{[q,q_1]})$. By our assumption 
$$\text{length}_{\GG}(\gamma_{[q,q']})\leq2\text{length}_{\GG}(\gamma_{[q,q_1]})=2L_{\GG}(q).$$

As we have assumed that $\gamma^+$ satisfies Claim \ref{c1},
\begin{equation}\label{inq8}
    \text{length}_{\GG}(\gamma_{[q,q']})\leq2L_{\GG}(q)\leq 2C_1\D_{\GG}(q,\widetilde{\T}_0)+2c_1
\end{equation}
We have observed that $d_{\GG}(q,q')>\D_{\GG}(q,\widetilde{\T}_0)$ as any curve joining $q$ and $q'$ also intersects $\widetilde{\T}_0$. Hence replacing in the previous Inequality \ref{inq8}, we conclude if $q,q'\in\gamma$ and $\text{length}_{\GG}(\gamma_{[q,q_1]})\geq\text{length}_{\GG}(\gamma_{[q_1,q']})$, then
\begin{equation}\label{inq12}
    \boxed{\text{length}\gamma_{[q,q']}<2L_{\GG}(q)\leq 2C_1\D_{\GG}(q,\widetilde{\T}_0)+2c_1\leq 2C_1d_{\GG}(q,q')+2c_1}
\end{equation}
This finishes the proof of uniform quasigeodesic behavior in
this case.

\vskip .1in
\noindent
\textbf{Case II:} Now we assume $\text{length}_{\GG}(\gamma_{[q,q_1]})< \text{length}_{\GG}(\gamma_{[q_1,q']})$.
\begin{figure}
    \centering
    \includegraphics[width=0.25\linewidth]{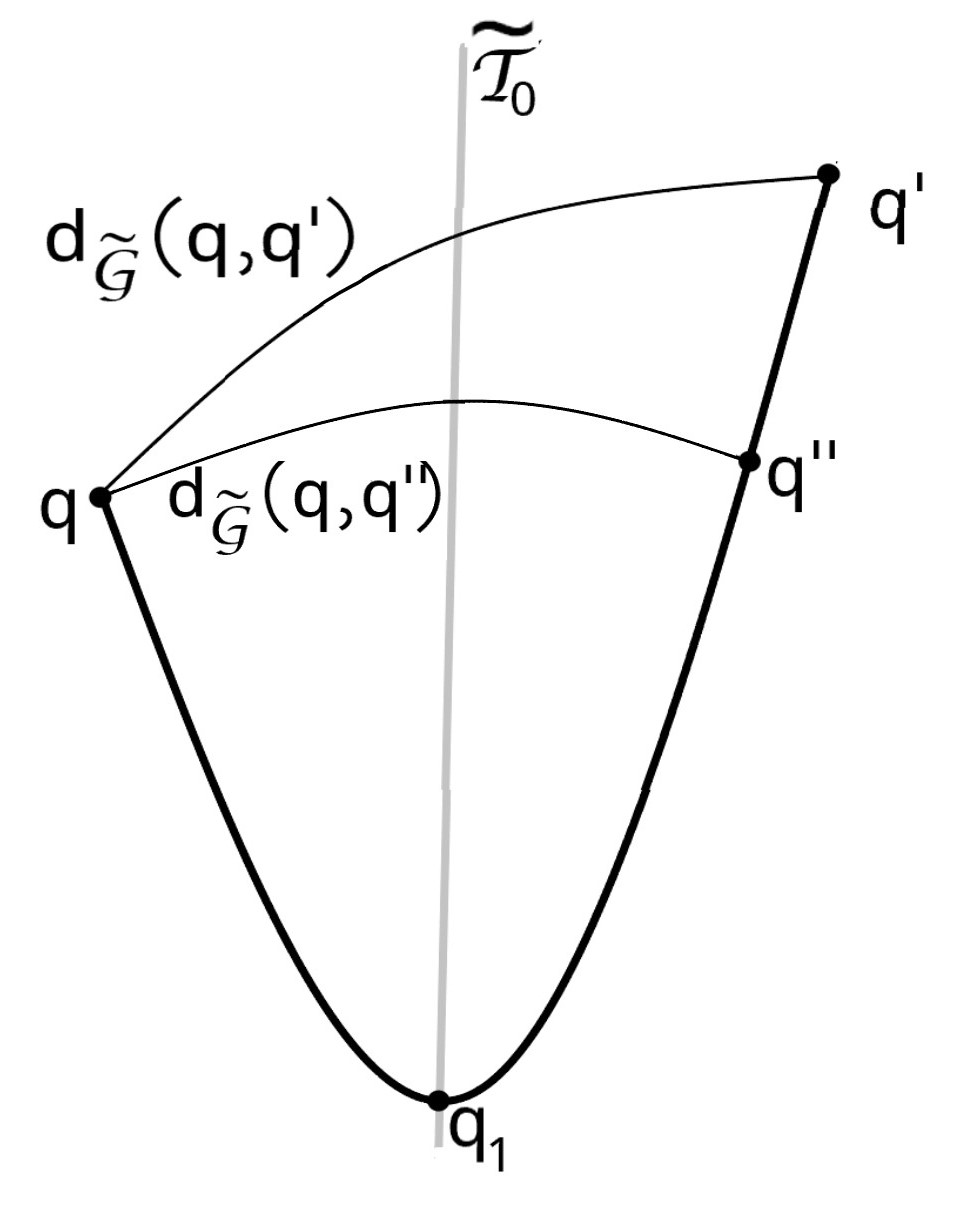}
    \caption{This depicts the situation in the universal cover.
	The curve with arrows in it is the flow line segment from
	$q$ to $q'$. The other two curves from $q$ to $q'$ and
	from $q$ to $q'$ are curves realizing the distance
	between these pairs of points.}
    \label{fig5}
\end{figure}
We first break $\gamma_{[q,q']}$ as $\gamma_{[q,q_1]}*\gamma_{[q_1,q'']}*\gamma_{[q'',q']}$ such that $\text{length}_{\GG}(\gamma_{[q,q_1]})= \text{length}_{\GG}(\gamma_{[q_1,q"]})$ as in Figure \ref{fig5}. 

Then by \ref{inq12} in case I, we conclude 
\begin{equation}\label{inq9}
    d_{\GG}(q,q'')\leq \text{length}_{\GG}(\gamma_{[q,q'']})\leq 2C_1\D_{\GG}(q,\widetilde{\T}_0)+2c_1
\end{equation}

Consider the points $q,q'$ and $q''$. The flow segment $\gamma_{[q'',q']}$ is an $(a_3,a_4)$  quasigeodesic segment  by Lemma \ref{qgr} as it is contained in a single plug $\MM_2$. Hence 
$$\text{length}_{\GG}(\gamma_{[q'',q']})\leq a_3 d_{\GG}(q',q'')+a_4$$
By the triangle inequality, we get $d_{\GG}(q',q'')\leq d_{\GG}(q',q)+d_{\GG}(q,q'')$. For the flow  segment $\gamma_{[q'',q']}$ we conclude
\begin{equation}\label{inq11}
\begin{split}
    \text{length}_{\GG}(\gamma_{[q'',q']})&\leq a_3d_{\GG}(q',q'')+a_4\\
    &\leq a_3[d_{\GG}(q',q)+d_{\GG}(q,q'')]+a_4\text{ \ \ by the
triangle inequality}\\
    &\leq a_3d_{\GG}(q,q')+a_3[2C_1\D_{\GG}(q,\widetilde{\T}_0)+2c_1]+a_4\text{ \ \ by \ref{inq9}}\\
    &=  a_3d_{\GG}(q,q')+2a_3C_1\D_{\GG}(q,\T)+2a_3c_1+a_4
\end{split}
\end{equation}
Finally adding \ref{inq9} and \ref{inq11} we conclude,
\begin{multline}
\text{length}(\gamma_{[q,q'']})+\text{length}(\gamma_{[q'',q']})\leq 
   2 C_1 \D_{\GG}(q,\widetilde{\T}_0)+2c_1+\\
    a_3d_{\GG}(q,q')+2a_3C_1\D_{\GG}(q,\widetilde{\T}_0)+2a_3c_1+a_4
   %\overline{C}d(q,q')+4\overline{C}\D_{\OG}(q,\T)+2\overline{C}d+\overline{d}
\end{multline}

As every path connecting $q$ and $q'$ intersects $\widetilde{\T}_0$, we get $\D_{\GG}(q,\widetilde{\T}_0)\leq d_{\GG}(q,q')$. Replacing in the previous equation we conclude
\begin{equation}\label{inq13}
\boxed{
    \begin{split}
        \text{length}_{\GG}(\gamma_{[q,q']})&\leq 2C_1d_{\GG}(q,q')+
2a_3C_1d_{\GG}(q,q')+a_3 d_{\GG}(q,q')+2c_1+2a_3c_1 +a_4\\
        &=(2C_1+2a_3C_1+a_3)d_{\GG}(q,q')+2c_1+2a_3c_1+a_4
    \end{split}
    }
\end{equation}
%\footnote{S: Some constants and terms were changed here. Check.}

We rename $C_0=2C_1+2a_3C_1+a_3$ and $c_0=2c_1+2a_3c_1+a_4$ and Equations \ref{inq12} and \ref{inq13} together imply that for any two points $q$ and $q'$ on $\gamma$, 
$$\text{length}_{\GG}{\gamma_{[q,q']}}\leq C_0d_{\GG}(q,q')+c_0$$
As $\gamma$ was chosen arbitrarily, this completes the proof that any flow line which intersects the common boundary component $\widetilde{\T}_0$ between $\MM_1$ and $\MM_2$ is a quasigeodesic. Moreover the same multiplicative and additive constants work for all flow line intersecting $\widetilde{\T}_0$.

In the beginning of the proof we fixed $\MM_1$ and $\MM_2$ and their common boundary component $\widetilde{\T}_0$, but the same method works for all other plugs which intersects along boundaries. The quasigeodesic constants differ for different choices of boundary components. Finally, as there are only finitely many plugs in $\N$, we can take maximums over all possibilities of boundary components in $\NN$ and we can choose global quasigeodesic constants for the flow lines which intersect any of the boundary components. 

This ends the proof of Lemma 
\ref{lf}.
\end{proof}
Lemma \ref{qgr} and Lemma \ref{lf} together imply that any flow line of $\widetilde{\Psi}_t$ in $\NN$ are uniformly quasigeodesic.

This completes the proof of Theorem \ref{main}.
\end{proof}

\begin{remark}
In the proof of the final theorem in Section $5.2$, we used two key ideas;
\begin{enumerate}
\item every flow ray of flow line contained in a single plug is quasigeodesic;
\item every flow ray which intersects a boundary component outside a narrow region eventually goes  away uniformly efficiently
from the boundary component it intersects.
\end{enumerate}
This suggests that the same general techniques used in this proof can be applied to study quasigeodesic behavior of flows in different contexts where these two properties hold. 
\end{remark}

\section{Comments on the Franks-Williams Manifolds $\N$ and a question}
As before we consider the decomposition of $\N$ as the union
$$\N=\M_1\cup \M_2 \cup ... \cup \M_n$$
where the components $\M_i$s intersects each other along their boundaries and the boundary components of $\M_i$ are homeomorphic to tori. We denote the collection of boundary components as $\{\T_j|j\in J\}$ where each $\T_j$ is a common boundary of two plugs from the collection $\M_1, \M_2,...,\M_n$. Now we again emphasize some properties of the manifold $\N$ and the collection of torus $\{\T_j|j\in J\}$:
\vskip 0.2in
\begin{enumerate}
    \item \textbf{$\N$ is irreducible:} It is easy to obtain from the construction that the pieces $\M_i$ are \textit{irreducible}, i,e every $2$-sphere bounds a $2$-ball in $\M_i$, and each 
    boundary component is incompressible. 
    We are attaching the pieces along boundaries homeomorphic to $2$-torus to get $\N$. Each such torus is incompressible in 
the particular piece containing it.
Hence the manifold $\N$ is irreducible.

 %   It is known that a three-manifold which supports an Anosov Flow is irreducible. As $\Psi_t$ is Anosov, $\M$ is irreducible.
%This also follows from results of Waldhausen \footnote{S: Cite 
%Waldhausen paper in Annals circa 1967.}
    
    \vskip 0.1in
    \item \textbf{JSJ-decomposition of $\N$:} Every irreducible orientable three-manifold supports a JSJ-decomposition, that is, a collection of tori separating the manifold into atoroidal and/or Seifert fibered  pieces. In our case we can show that the collection $\{\T_j|j\in J\}$ is a minimal JSJ-decomposition on $\N$. The orientability condition can be satisfied by moving to a finite cover of $\N$ (if required). Note from the construction of $\N$ that each of the tori $\T_j$ is separating in $\N$. As explained in the previous item, each torus $\T_j$ is 
     \textit{incompressible}, i,e closed, 2-sided and canonically injects in the fundamental group of $\N$. To see that the collection $\{\T_j|j\in J\}$ is minimal, we claim that each piece in the decomposition w.r.t. $\{\T_j|j\in J\}$ is atoroidal, see detail in the item $3$ below.
     \vskip 0.1in
    \item \textbf{$\N$ is non hyperbolic and non Seifert fibered:} As there exits incompressible tori $\T_j$ in $\N$, if follows
that $\N$ cannot be a hyperbolic manifold. To see that $\M$ is also non Seifert fibered we consider the plugs $\M_1,\M_2,...,\M_n$ which are also the components of the torus decomposition, we note that each of the plugs are constructed by removing solid torus neighbourhoods from mapping tori of hyperbolic maps on $\TT$, i,e $\TT\times [0,1]/((x,1)\sim(\Phi(x),0))$ for some  $\Phi:\TT\to\TT$ homotopic to a hyperbolic map on $\TT$. 
    They are mapping tori of pseudo-Anosov homeomorphims of a torus minus finitely
    many disks. And it is well known that such mapping tori  are atoroidal, and hence non Seifert fibered. This now
    implies that these are the pieces of the JSJ decomposition of $\N$, and all pieces 
    are atoroidal.
    \vskip 0.1in
    
    \item \textbf{Existence of a \textit{non-positively curved} metric on $\N$:} As the manifold $\N$ is $Haken$, i,e
    it is  irreducible, and contains a closed incompressible surface, and it has one
    atoroidal piece, we can define a \textit{non-positively curved} Riemannian metric on $\M$ by \cite[Theorem 3.3]{Lee95}. This opens a potential new direction to explore as described below.
    
\end{enumerate}

\subsection{Future questions} By item (4) above, we can define a non-positively curved Riemannian metric on $\N$. Then the universal cover $\NN$ with the path metric induced by the non-positively curved Riemannian metric is $CAT(0)$. For a $CAT(0)$-space we can define different types of `boundaries at infinity',for example the \textit{Tits boundary} or the \textit{Morse boundary} and we can define topologies on the boundary at infinity, $\partial_{\infty}\NN$. For example, one question to ask is whether the weak-stable or weak-unstable foliations of $\Psi_t$ satisfy the \textit{Continuous Extension Property}:

\begin{question}
Suppose $F\in\FF^{ws}$ or $\FF^{wu}$ of $\widetilde{\Psi}_t$. It is known that leaves of the weak-stable or weak-stable foliations of Anosov flows are \textit{Gromov hyperbolic} and we can define the Gromov boundary $S^1(F)$ (it is a circle). Then does the inclusion $i:F\rightarrow\NN$ extends continuously to a map $\hat{i}:F\cup S^1(F)\rightarrow \NN\cup\partial_{\infty}\NN$?
\end{question}

Similar type of questions have been extensively studied  for Anosov flows on hyperbolic $3$-manifolds. A big difference in the case of Franks-Williams manifolds is that the Morse lemma is not true in general, but in some particular cases there is possibility that the quasigeodesic flow rays are boundedly away from actual geodesic rays. Hence this question is relevant for quasigeodesic Anosov flow on non-positively curved manifolds.

%Fix a $CAT(0)$ metric, say $\G_0$, on $\M$. Here we present a brief construction of the $CAT(0)$ metric on $\M$, for detailed description see \cite[Proposition 2.3]{Lee95}:

%Consider a piece $\M_i$ in the JSJ decomposition of $\M$. As $\M_j$ is atoroidal, we can equip the interior $int (\M_j)$ with a hyperbolic metric, say $h$, with constant curvature $-1$ and of finite volume. Next we can consider any flat metric on $\partial \M_i$ and extend it on the whole $\M_j$ in such a way that outside a neighbourhood of $\partial \M_i$ the hyperbolic metric is unchanged. As we can start with any flat metric on each of the separating tori $\T_j$, it is clear that we can define $\G_0$ on $\M$ in such a way that outside a compact neighbourhood of the union on the separating tori, say $\mathfrak{B}=\bigcup N_{d}(\T_j)$ where $d>0$ and $j\in J$, the metric is of constant negative curvature $-1$ and on each of the separating tori the metric is flat. 

%As the metric $\G_0$ is non-positively curved, we can define the \textit{visual boundary} of the universal cover $\MM$. 
%\footnote{S: This has to be thought up carefully whether to put in,
%and if so, what to put in.}
%\cite{Cal, Foliation, THR2, FW, Gromov}, 
%\newpage

%\newpage
 %\bibliographystyle{alpha}
%\bibliography{references}

\end{document}